\documentclass[a4paper,11pt]{article}

\usepackage[small,bf,hang]{caption} 

\usepackage{todonotes}
\usepackage[ruled,linesnumbered]{algorithm2e}
\usepackage{amsmath,amsthm,amssymb}
\usepackage{graphicx,subcaption,tikz}
\usepackage{comment,xspace,paralist}							
\usepackage[shortlabels]{enumitem}
\setlist[itemize]{itemsep=2pt,parsep=0pt,topsep=4pt,partopsep=0pt}
\setlist[enumerate]{label=\upshape(\alph*),itemsep=2pt,parsep=0pt,topsep=4pt,partopsep=0pt}	
\usepackage{hyperref}
\usepackage{changepage}
\hypersetup{
    colorlinks=true,
    citecolor = blue,
    linkcolor=blue,
    filecolor=magenta,
    urlcolor=cyan,
}
\usepackage[margin=1in]{geometry}
\usepackage{verbatim}
\usepackage[capitalize]{cleveref}
\usepackage{color}
\usepackage{authblk}

\usetikzlibrary{decorations.pathreplacing, calligraphy}

\newtheorem{theorem}{Theorem}[section]
\newtheorem{lemma}[theorem]{Lemma}

\newtheorem{claim}{Claim}[theorem]
\Crefname{claim}{Claim}{Claims}
\newtheorem{corollary}[theorem]{Corollary}
\Crefname{subsection}{Subsection}{Subsections}

\Crefname{conjecture}{Conjecture}{Conjectures}
\theoremstyle{definition}
\newtheorem{definition}[theorem]{Definition}

\expandafter\let\expandafter\oldproof\csname\string\proof\endcsname
\let\oldendproof\endproof
\renewenvironment{proof}[1][\proofname]{%
	\oldproof[\normalfont\bfseries #1]%
}{\oldendproof}
\newenvironment{subproof}[1][\normalfont\it Subproof]{%
	\begin{proof}[#1]%
	}{%
	\end{proof}%
}

\newcommand{\CN}{\operatorname{CN}}
\newcommand{\Ov}{\mathcal{O}}
\newcommand{\Sh}{\operatorname{Sh}}
\newcommand{\Adj}{\operatorname{Adj}}
\newcommand{\Cov}{\operatorname{Cov}}
\newcommand{\cl}{\operatorname{core}}

\usepackage{libertinus}

\title{A quasi-polynomial $\chi$-bound for $P_6$-free graphs}

	\author[1]{Shenwei Huang}

    \author[2]{Yidong Zhou}

    \author[3]{Yuwen Zhou}
	
	\affil[1] {School of Mathematical Sciences and LPMC, Nankai University, Tianjin 300071, P.R.~China. Email: {\tt shenweihuang@nankai.edu.cn}.}
  
    \affil[2] {College of Computer Science, Nankai University, Tianjin 300350, China.}

    \affil[3] {Irving K. Barber Faculty of Science, University of British Columbia Okanagan, Kelowna, BC V1V 1V7, Canada.}

\begin{document}

\maketitle

\begin{abstract}
In this paper we prove that every graph $G$ with no induced six-vertex path satisfies $\chi(G)\le \omega(G)^{c\log_2\omega(G)}$ for $c=45/\log_2(4/3)<109$. 
The best previous bound was exponential in $\omega(G)$.
Our result is the first quasi-polynomial bound for $P_6$-free graphs and generalizes the bound for $P_5$-free graphs by Scott, Seymour, and Spirkl [Polynomial bounds for chromatic number. {IV}. {A} near-polynomial bound for excluding the five-vertex path.
{\em Combinatorica}, 43(5):845--852, 2023].

\medskip

\noindent\textbf{Keywords:} quasi-polynomial $\chi$-bound; $P_6$-free graphs; modular decomposition
\end{abstract}

\maketitle

\section{Introduction}

All graphs in this paper are finite and simple. A class of graphs is \emph{$\chi$-bounded} if there is a function $f$ such that $\chi(G)\le f(\omega(G))$ for every graph $G$ in the class, and \emph{polynomially $\chi$-bounded} if $f$ can be taken to be a polynomial. For a graph $H$, a graph is \emph{$H$-free} if it has no induced subgraph isomorphic to $H$, and $P_t$ denotes the path on $t$ vertices.

Gy\'arf\'as~\cite{gyarfas1975,gyarfas1987} proved that for every $t$ the class of $P_t$-free graphs is $\chi$-bounded, with $\chi(G)\le (t-1)^{\omega(G)-1}$. The base of the exponent was improved by Gravier, Ho\`ang and Maffray~\cite{ghm2003} and recently by Oum~\cite{oum2026}, but for every $t\ge 6$ all known bounds have remained exponential in $\omega(G)$. Whether the class of $P_t$-free graphs is polynomially $\chi$-bounded is a well-known open problem, see the survey of Scott and Seymour~\cite{ss2020}. 
The case $t=4$ is trivial ($P_4$-free graphs are perfect). 
For $t=5$, Scott, Seymour and Spirkl~\cite{sss2023} proved that every $P_5$-free graph $G$ with $\omega(G)\ge3$ satisfies $\chi(G)\le\omega(G)^{\log_2\omega(G)}$.
Nguyen~\cite{nguyen2025pairs} improved this bound to $\chi(G)\le\omega(G)^{O(\log\omega(G)/\log\log\omega(G))}$.
He subsequently proved that the class of $P_5$-free graphs is polynomially $\chi$-bounded~\cite{nguyen2025p5}.

Our main result is a quasi-polynomial bound for $t=6$.

\begin{theorem}\label{thm:main}
Let $c=45/\log_2(4/3)$. For every $P_6$-free graph $G$,
$
\chi(G)\le \omega(G)^{c\log_2\omega(G)}$.

\end{theorem}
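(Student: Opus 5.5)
We follow the strategy of Scott, Seymour and Spirkl for $P_5$-free graphs: induction on $\omega=\omega(G)$ using a ``splitting'' lemma in which one part of the graph has clique number at most $\tfrac34\omega$. Write $f(\omega)=\omega^{c\log_2\omega}$; note $\log_2 f(\omega)=c(\log_2\omega)^2$. The multiplicative overhead we can afford when passing from $\omega$ to $\tfrac34\omega$ is
\[
f(\omega)/f(\tfrac34\omega)\ \ge\ \omega^{c\log_2(4/3)}=\omega^{45}.
\]
This is where the constant $c=45/\log_2(4/3)$ comes from. So the goal is a lemma of the form: \emph{every $P_6$-free graph $G$ with clique number $\omega$ either has $\chi(G)\le \omega^{45}\max_{H}\chi(H)$, where $H$ ranges over induced subgraphs with $\omega(H)\le\tfrac34\omega$, or it reduces (via a homogeneous set / modular decomposition) to smaller instances.}

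First we reduce to graphs with no nontrivial homogeneous set. If $X$ is a module, then $\chi(G)\le \chi(G/X)\cdot\ldots$ Rather than control that ratio directly, we use the standard fact that a minimal counterexample (vertex-minimal with $\chi(G)>f(\omega(G))$) is prime. Indeed, substituting a graph into a vertex multiplies weighted clique number and chromatic number compatibly. Concretely, we work with the weighted version: $\chi$ is replaced by fractional/weighted colouring of the quotient, with weights equal to $\chi$ of the modules. We then use the superadditivity $f(a)+f(b)\le f(a+b)$ to absorb the modules.

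Next, in a prime $P_6$-free graph we pick a maximum clique $K$, or more usefully a vertex $v$ with its neighbourhood $N(v)$ and the second neighbourhood. We cover $V(G)$ by at most $\omega^{O(1)}$ sets, each of which is either
\begin{itemize}
\item contained in the common neighbourhood of some set of vertices forming a clique of size $\ge\omega/4$ (hence of clique number $\le\tfrac34\omega$), or
\item anticomplete to a large clique, so we can recurse into it with a structural argument.
\end{itemize}
For $P_6$-freeness the natural tool is a dominating structure: by known results (van 't Hof–Paulusma), every connected $P_6$-free graph has a dominating induced $C_6$ or a dominating (not necessarily induced) complete bipartite subgraph. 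This lets us bound the number of ``distance layers'' and express each vertex's type by its adjacency to a bounded-size dominating set, together with a few cliques. Each vertex either has many neighbours in a maximal clique (giving $\omega$-reduction to $\tfrac34\omega$ after a union bound over $\le\omega^{O(1)}$ choices), or few. In the latter case we use $P_6$-freeness and primeness to show the ``few-neighbour'' vertices are themselves coverable by polynomially many sets with smaller clique number. Summing gives $\chi(G)\le \omega^{45}f(\tfrac34\omega)\le f(\omega)$.

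The main obstacle is this last step. For $P_5$ the argument uses that, for a maximal clique $K$ and the vertices with few neighbours in $K$, a $P_5$ forces a homogeneous-set structure. For $P_6$ there is one extra layer of distance, and we expect to need to iterate the clique-covering argument roughly twice (for instance over pairs of cliques or over a dominating $C_6$/bipartite piece). This iteration is what inflates the exponent to around $45$. We would first try to prove that in a prime $P_6$-free graph, for a maximal clique $K$, the set of vertices at distance $\ge 2$ from $K$ with neighbours mixing on components splits into $\omega^{O(1)}$ modules-relative pieces. We would then count the polynomial factors carefully so that their total stays within $\omega^{45}$.
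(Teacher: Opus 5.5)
Your computation of where $c$ comes from is correct and is the paper's starting point (\cref{lem:tight}): in a counterexample $G$ with $\omega(G)=W$ minimum, every set of clique number at most $\frac34W$ has chromatic number below $W^{-45}\chi(G)$. Beyond that, however, the proposal is a plan, not a proof. Your ``goal lemma'' only restates the theorem in this setting, and the step that would prove it is explicitly deferred as ``the main obstacle''. That step is the whole difficulty, and the tools you suggest for it are not enough.

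\emph{No clique-number drop for the remaining vertices.} Vertices outside the common neighbourhood of every clique of size $\ge\omega/4$ get no drop in clique number. An unspecified recursion on them multiplies the polynomial overhead at every level, so no bound of the form $\omega^{45}f(\frac34\omega)$ comes out.

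\emph{No bounded dominating set.} The van 't Hof--Paulusma theorem gives no bounded-size dominating set in its second outcome, since the dominating complete bipartite subgraph can be arbitrarily large. Even a dominating induced $C_6$ only covers $V(G)$ by six neighbourhoods of clique number $\omega-1$. That is useless here, because $f(\omega)/f(\omega-1)\to1$.

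\emph{Primeness is not established.} Replacing a module by a vertex or a clique does not interact well with $f$: for instance, $\chi(G)\le\chi(G-X)+\chi(X)$ gains nothing because $\omega(G-X)$ may equal $\omega(G)$. Your weighted argument and superadditivity of $f$ do not repair this. The reduction is also unnecessary: the paper uses the modular decomposition only locally.

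The missing ideas are the ones that prevent overhead from compounding.
\begin{enumerate}
\item Nguyen's polynomial pure-pair lemma (\cref{lem:nguyen}) gives an anticomplete pair of connected sets, each with $\chi\ge\omega^{-40}\chi(G)$.
\item The heart of the paper is a polynomial bound for sets dominated by a stable set (\cref{thm:stable}): $\chi(Y)\le55\,\omega(Y)^{10}\max_{r\in R}\chi(N(r)\cap Y)$. Via \cref{lem:contract} this yields \cref{cor:blob}: the neighbourhood of a union of pairwise anticomplete connected sets $A_i$ has chromatic number at most $55\,\omega^{10}\max_i\chi(N(A_i))$.
\item A cut-off lemma (\cref{lem:cut}) walks down the modular decomposition to a connected, co-connected strong module $X$. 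Its boundary is covered by $\CN(X)$, $\CN(P)$, the sets $\CN(K_i)$ for at most $W$ cliques of size $>W/4$, and a set dominated by a stable set whose stars lie in the sets $\CN(A_j)$. The $\CN$ sets are negligible in a tight graph, using \cref{lem:endpoint} for mixed vertices with a tail.
\end{enumerate}
With these, take a maximal system of pairwise anticomplete pieces with $\chi(N(A_i))\le(W+2+W^{16})W^{-45}\chi$. Every piece and free component then has chromatic number below $W^{-2}\chi+(W+2)W^{-45}\chi$. Since they are pairwise anticomplete, they contribute only their maximum. The boundary contributes at most $W^{16}(W+2+W^{16})W^{-45}\chi$, which gives the contradiction.

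Nothing in your sketch plays the role of the second ingredient. Without a polynomial bound of that kind, you cannot control a union of many neighbourhoods, and that is exactly where your ``few-neighbour'' vertices live.
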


Let $\chi(X)$ and $\omega(X)$ denote $\chi(G[X])$ and $\omega(G[X])$. 
The heart of the paper is the following polynomial statement.

\begin{theorem}\label{thm:stable}
Let $G$ be a $P_6$-free graph, $R\subseteq V(G)$ be a stable set, and $Y\subseteq V(G)\setminus R$ be nonempty with every vertex of $Y$ having a neighbor in $R$. Then
$\chi(Y)\le 55\,\omega(Y)^{10}\max_{r\in R}\chi(N(r)\cap Y)$.

\end{theorem}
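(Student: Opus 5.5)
We argue by induction on $\omega(Y)$, with the aim of reducing $Y$ to pieces that are each covered by a bounded number of neighbourhoods $N(r)\cap Y$, or have strictly smaller clique number. Write $\tau=\max_{r\in R}\chi(N(r)\cap Y)$ and $\omega=\omega(Y)$. First we choose $R$ minimal such that every vertex of $Y$ still has a neighbour in $R$. Then every $r\in R$ has a \emph{private} neighbour $y_r\in Y$, adjacent to $r$ and to no other vertex of $R$. The role of $P_6$-freeness is to control how private neighbours and vertices of $Y$ interact. For distinct $r,r'\in R$ and a vertex $y\in Y$ adjacent to $r'$ but not to $r$, the path $y_r\,r\,\dots$ combined with $y\,r'\,y_{r'}$ must avoid creating an induced $P_6$. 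This yields strong restrictions on the bipartite adjacency between the sets $N(r)\cap Y$, in particular on whether $y_r$ sees $y$.

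The main decomposition step is as follows. We fix a vertex $r_0\in R$ and let $A=N(r_0)\cap Y$, so that $\chi(A)\le\tau$. We then split $Y\setminus A$ into the vertices with a neighbour in $A$ and the rest. Vertices of $Y\setminus A$ that are anticomplete to $A$ form a smaller instance, with $R\setminus\{r_0\}$ as the stable set. The plan is to iterate a greedy choice of $r_0,r_1,\dots$, giving layers $A_i=N(r_i)\cap Y\setminus\bigcup_{j<i}A_j$, and to use $P_6$-freeness to show that vertices at ``distance two'' through the structure have neighbourhoods inducing graphs of clique number below $\omega$. Concretely, suppose $y\in Y$ has a neighbour $a\in A_i$, and let $K$ be a maximum clique in $N(y)$. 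If $K$ mixes with $r_i$ in a bad way, then $r_i$–$a$–$y$–\dots extends to a long induced path. The hope is to show that for each $y$, either $y$ lies in a bounded number of sets of the form $N(r)\cap Y$, or the set of such $y$ with a common ``witness'' has clique number at most $\omega-1$. A standard Gyárfás-type path argument, taking a connected component and a vertex cover of its neighbourhood, shows that in a $P_6$-free graph vertices reachable by an induced path of length at least $3$ from $r_0$ are limited. Each induction step would then cost a factor polynomial in $\omega$, for example by covering via at most $O(\omega^2)$ cliques, each of whose common non-neighbourhood has smaller clique number. Carrying a polynomial factor through a bounded number of levels (not $\omega$ levels) gives the exponent $10$ and the constant $55$.

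The key difficulty is keeping the number of recursion levels bounded. A naive induction on $\omega$ multiplies by a factor at each step and gives an exponential bound. So the decisive step must show directly that $Y$ is partitioned into $O(\omega^{10})$ sets, each of chromatic number at most $\tau$. I would aim for this by a ``template'' argument. Take a maximum clique $K$ in $Y$, and classify vertices of $Y$ according to their adjacency to $K$ together with the vertices of $R$ dominating $K$. One then shows via $P_6$-freeness that each class is contained in a union of boundedly many $N(r)\cap Y$ after removing sets that are complete to a vertex of $K$. Those removed sets lie in neighbourhoods of vertices, and their clique number drops. Handling them without a new induction level — perhaps by noting they again satisfy the hypothesis with the same $R$ and a stronger structure — is the step I expect to be the main obstacle.
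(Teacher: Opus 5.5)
Your proposal does not yet contain a proof. The step that carries all the weight is phrased as a hope and then explicitly left as the main obstacle, and the earlier steps give no mechanism that would close it.

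\begin{itemize}
\item The greedy layers $A_i=N(r_i)\cap Y\setminus\bigcup_{j<i}A_j$ can number up to $|R|$. Each has $\chi(A_i)\le\tau$, but nothing bounds how many of them need disjoint palettes, since vertices in different layers can be adjacent.
\item Splitting off the part anticomplete to $A$ is an induction on $|R|$, and it pays for $N_Y(A)\setminus A$ once per point.
\item Gy\'arf\'as-type path arguments are exactly what produce the exponential bound of \cref{lem:gyarfas}, so they will not help here.
\item The template step fails for the reason you suspect. You remove up to $\omega$ sets, one inside $N(k)$ for each $k\in K$. If each is colored by its own application of the induction hypothesis, you get $f(\omega)\le \omega f(\omega-1)+\mathrm{poly}(\omega)\tau$, which grows factorially.
\end{itemize}
Nothing in the sketch actually derives the exponent $10$ or the constant $55$.

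Your diagnosis that induction on $\omega$ is inherently multiplicative is also off. Every induction in the paper costs only an additive $\mathrm{poly}(w)M$ per level. At each level it passes down either the neighbourhood of a single vertex (\cref{thm:nofar}, \cref{thm:cap}, the connected case of \cref{thm:stablestars}) or a family of leftovers that can all share one palette. What makes that sharing possible is a change of the graph being colored.

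\begin{itemize}
\item The paper first colors the overlap graph $\Ov(Y,R)$, whose edges are the adjacent pairs with a common point, with $11w^7M$ colors.
\item In the root-tree decomposition the small pieces are pairwise $\Ov$-anticomplete (\cref{lem:tree}(a)), even though they may be adjacent in $G$. So one palette of $11(w-1)^7M$ colors covers all of them.
\item The sets $B_K$ receive $11w^6M$ fresh colors, using the maximum-type root vertices, \cref{cor:capt}, \cref{lem:roots} and \cref{lem:class}.
\item Each color class of $\Ov$ has stable stars. For such pairs \cref{thm:stablestars} gives $\chi\le 5w^3$, independently of $M$. One-point vertices are handled by \cref{thm:cg}, and the rest by the far graph, frames and shadows; this is where private vertices are actually used.
\end{itemize}
The bound is the product $11w^7M\cdot 5w^3=55w^{10}M$. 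Your plan is missing some device of this kind, one that lets the lower-clique-number parts share colors.
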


 For $A\subseteq V(G)$, let $N(A)$ be the set of vertices outside $A$ with a neighbor in $A$. We have the following corollary the above theorem.

\begin{corollary}\label{cor:blob}
Let $G$ be a $P_6$-free graph, and let $A_1,\dots,A_k$ be pairwise disjoint, pairwise anticomplete subsets of $V(G)$ such that each $G[A_i]$ is connected. Then 
$$\chi\bigl(N(A_1\cup\dots\cup A_k)\bigr)\le 55\,\omega(G)^{10}\max_{1\le i\le k}\chi(N(A_i)).$$
\end{corollary}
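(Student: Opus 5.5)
We derive the corollary from \cref{thm:stable} by contracting each connected set $A_i$ to a single vertex. Set $A=A_1\cup\dots\cup A_k$ and $Y=N(A)$; we may assume $Y\neq\emptyset$. Build $G'$ from $G[A\cup Y]$ by replacing each $A_i$ with a single new vertex $r_i$, adjacent exactly to $N(A_i)\cap Y=N(A_i)$. Here $N(A_i)\subseteq Y$: a vertex outside $A$ with a neighbour in $A_i$ lies in $N(A)$, and no vertex of $A_j$ with $j\ne i$ is adjacent to $A_i$ since the sets are pairwise anticomplete. Let $R=\{r_1,\dots,r_k\}$. Because no two distinct $A_i$ are adjacent, $R$ is stable in $G'$. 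Every vertex of $Y$ has a neighbour in $A$, hence in some $A_i$, hence is adjacent to $r_i$. Also $G'[Y]=G[Y]$, so $\chi(Y)$ and $\omega(Y)$ are unchanged, and $N_{G'}(r_i)\cap Y=N(A_i)$.

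The key step is to show that $G'$ is $P_6$-free, so that \cref{thm:stable} applies. Suppose $P$ is an induced $P_6$ in $G'$. Since $R$ is stable, the path contains no two consecutive vertices of $R$. For each $r_i\in P$, its neighbours on $P$ (one or two vertices of $Y$) each have a neighbour in $A_i$. Replace $r_i$ by a shortest path $Q_i$ inside $G[A_i]$ joining a neighbour of the predecessor to a neighbour of the successor; this path exists because $G[A_i]$ is connected. If $r_i$ is an endpoint of $P$, replace it by a single vertex of $A_i$ adjacent to its unique path neighbour.

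Now take the minimal choice and argue the result contains an induced path on at least six vertices. Internal vertices of a shortest $Q_i$ have no neighbours among the other vertices of $P$. This fails in general, and the fix is the standard trick: pick the segment $Q_i$ minimally, and let $u,w$ be the neighbours of $r_i$ on $P$. Within $G[A_i\cup\{u,w\}]$ take a shortest $u$–$w$ path; its interior lies in $A_i$ and has length at least $1$. Vertices of $A_i$ are non-adjacent to vertices of $A_j$ for $j\ne i$. They may, however, be adjacent to other $Y$-vertices of $P$, and this is the main obstacle. To handle it, I would use that $P\cap Y$ has at most $6$ vertices, and argue via the shortest-path choice plus induction on the number of $R$-vertices in $P$. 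One replaces the $r_i$ one at a time, each time finding an induced $P_6$ in a graph with fewer contracted vertices. If a vertex $a\in A_i$ on the new segment is adjacent to some $Y$-vertex $y$ of $P$ other than $u,w$, one shortcuts the path through $a$ and $y$. The resulting object is still an induced path of $G'$ with fewer $R$-vertices, but it may be shorter.

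This length issue is delicate, and I would first try an alternative that avoids it: apply \cref{thm:stable} in $G$ itself. Choose a representative $a_i\in A_i$ for each $i$; the set $R=\{a_1,\dots,a_k\}$ is stable. The difficulty is that $Y$-vertices need not be adjacent to $a_i$. Instead I would do the contraction and prove $P_6$-freeness through the cleaner observation that contracting a connected set which has no neighbours in another contracted set preserves $P_t$-freeness. This observation I would establish by the shortcut argument above, handling the endpoint and interior cases separately. Once $G'$ is $P_6$-free, \cref{thm:stable} gives
\[
\chi(N(A))=\chi(Y)\le 55\,\omega(Y)^{10}\max_i\chi(N(A_i))\le 55\,\omega(G)^{10}\max_i\chi(N(A_i)),
\]
as required.
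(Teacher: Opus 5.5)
Your overall route is the paper's: you form $G'$ from $G[Y]$ by adding a stable set $R=\{r_1,\dots,r_k\}$ with $N_{G'}(r_i)=N(A_i)$, check that $(Y,R)$ is a dominated pair, and apply \cref{thm:stable}. That part is fine. The gap is the step you yourself call key, the $P_6$-freeness of $G'$, which the proposal never proves. You claim the direct substitution argument ``fails in general'' because vertices of $A_i$ may be adjacent to other $Y$-vertices of $P$. You then replace it with a shortcut-and-induct argument that, as you admit, may shorten the path, so it does not produce an induced path on six vertices in $G$. Your closing appeal to ``the shortcut argument above'' therefore establishes nothing.

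The obstacle you identify does not exist, and seeing why is the missing idea. You made $r_i$ adjacent to \emph{every} vertex of $N(A_i)$. So if some $y\in Y\cap V(P)$ has a neighbour in $A_i$, then $y\sim r_i$ in $G'$. Since $P$ is induced and contains $r_i$, $y$ must be one of the at most two path-neighbours $u,w$ of $r_i$.

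Hence the direct substitution works:
\begin{itemize}
\item Replace an interior $r_i$ by the interior of a shortest $u$--$w$ path in $G[A_i\cup\{u,w\}]$; this interior is nonempty because $u\not\sim w$.
\item Replace an end $r_i$ by one neighbour in $A_i$ of its unique path-neighbour.
\end{itemize}
The new vertices from $A_i$ then see no other vertex of $P\cap Y$, and segments coming from different $A_i,A_j$ are anticomplete. The result is an induced path of $G$ with at least as many vertices as $P$, contradicting $P_6$-freeness of $G$. This is exactly \cref{lem:contract}, which the paper proves this way and applies with $U=N(A_1\cup\dots\cup A_k)$. No induction on the number of contracted vertices is needed.
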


The paper is organized as follows.
\Cref{sec:prelim} collects notation and the results we cite. \Cref{sec:tools} introduces dominated pairs and proves the basic tools used for \cref{thm:stable}. \Cref{sec:stablestars} proves the cubic bound for stable neighborhoods.
\cref{sec:overlap} colors the overlap graph and completes the proofs of \cref{thm:stable} and \cref{cor:blob}, and \cref{sec:main} proves \cref{thm:main}.

\section{Preliminaries}\label{sec:prelim}

Let $G$ be a graph. We write $u\sim v$ if $u,v$ are adjacent and $u\not\sim v$ otherwise.
Let $N(v)$ be the set of neighbors of $v$ and $N[v]=N(v)\cup\{v\}$. 
For $A\subseteq V(G)$, $N(A)$ is the set of vertices outside $A$ with a neighbor in $A$, and $N_C(A)=N(A)\cap C$. Disjoint sets $A,B$ are \emph{complete} (\emph{anticomplete}) if every vertex of $A$ is adjacent (non-adjacent) to every vertex of $B$; a vertex $v\notin A$ is complete or anticomplete to $A$ if $\{v\}$ is. A vertex $v\notin A$ is \emph{mixed} on $A$ if it has both a neighbor and a non-neighbor in $A$. We write
$\CN(A)=\{v\in V(G)\setminus A:\ v \text{ is complete to } A\}$.
A set $A$ is \emph{connected} if $G[A]$ is connected and \emph{co-connected} if the complement of $G[A]$ is connected. An induced path $v_1\hbox{-}v_2\hbox{-}\cdots\hbox{-}v_k$ has $v_i\sim v_{i+1}$
for each $1\le i\le k-1$ and no other edges among its vertices. All logarithms are to base $2$.

We present some known results we shall use later.

\begin{lemma}[Gy\'arf\'as~\cite{gyarfas1987}]\label{lem:gyarfas}
Every $P_t$-free graph $G$ satisfies $\chi(G)\le (t-1)^{\omega(G)-1}$.
\end{lemma}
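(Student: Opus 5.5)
The plan is Gyárfás's path argument, by induction on $\omega(G)$. The base case $\omega(G)\le1$ is immediate, since then $G$ is edgeless and $\chi(G)\le 1=(t-1)^0$. For the inductive step we may assume $G$ is connected, since $\chi$ is the maximum over components and $\omega$ does not increase on a component. We may also assume $\chi(G)>(t-1)^{\omega(G)-1}$ and aim for a contradiction. Write $\omega=\omega(G)$.

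The key observation is that for every vertex $v$ we have $\omega(N(v))\le\omega-1$, because a clique in $N(v)$ extends by $v$. So induction gives $\chi(N(v))\le (t-1)^{\omega-2}$ for every vertex $v$. The idea is then to grow an induced path $v_1\hbox{-}\cdots\hbox{-}v_k$ together with a connected set $C_k$. The invariants are:
\begin{itemize}
\item $C_k$ is anticomplete to $\{v_1,\dots,v_{k-1}\}$;
\item $v_k$ has a neighbor in $C_k$;
\item $\chi(C_k)\ge (t-1)^{\omega-1}-(k-1)(t-1)^{\omega-2}$, or at least $\chi(C_k)$ is large enough to continue.
\end{itemize}
To start, take any $v_1$ and let $C_1$ be the component of $G-v_1$ of largest chromatic number. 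Then $\chi(C_1)\ge\chi(G)-1$, and $v_1$ has a neighbor in $C_1$ because $G$ is connected.

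For the step from $k$ to $k+1$, remove $N(v_k)\cap C_k$ from $C_k$. This costs at most $(t-1)^{\omega-2}$ colors. Then take a component $D$ of $C_k\setminus N(v_k)$ of maximum chromatic number. Since $C_k$ is connected, $D$ has a neighbor $v_{k+1}\in N(v_k)\cap C_k$. Set $C_{k+1}=D$. Then $v_{k+1}$ is adjacent to $v_k$. It is not adjacent to the earlier $v_i$, because $C_k$ is anticomplete to them. The set $D$ avoids $N(v_k)$, so $D$ is anticomplete to $v_1,\dots,v_k$. Hence $v_1\hbox{-}\cdots\hbox{-}v_{k+1}$ is an induced path and the invariants are maintained.

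After $t-1$ steps we would have an induced $P_t$, which is the desired contradiction. The one delicate point is bookkeeping the chromatic-number loss per step so that it stays positive throughout. Subtracting $(t-1)^{\omega-2}$ per step from the total $(t-1)^{\omega-1}$ allows $t-1$ steps while keeping $\chi(C_k)>0$, which is sufficient: $C_{t-1}$ is nonempty, so it yields $v_t$.

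An alternative that avoids the exact arithmetic is to bound $\chi(G)\le \sum_{i}\chi(N(v_i)\cap C_{i})+\chi(C_{t-1})$ along a maximal such path. In that sum, each of the $t-2$ neighborhood terms is at most $(t-1)^{\omega-2}$. The final set $C_{t-1}$ must be empty, or else it would extend the path to a $P_t$. This gives $\chi(G)\le (t-2)(t-1)^{\omega-2}+1\le (t-1)^{\omega-1}$, completing the induction.
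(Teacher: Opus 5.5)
Your proof is correct, and it is essentially Gyárfás's standard path-growing argument from the cited reference; the paper itself gives no proof and only quotes the result. Your bookkeeping works because $\chi(G)\ge (t-1)^{\omega-1}+1$ gives $\chi(C_1)\ge (t-1)^{\omega-1}$, hence $\chi(C_{t-1})\ge (t-1)^{\omega-2}\ge 1$. Your second version is the same computation, $\chi(G)\le 1+(t-2)(t-1)^{\omega-2}\le (t-1)^{\omega-1}$, where the $+1$ accounts for the deleted vertex $v_1$.
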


For a graph and $k\ge1$, a \emph{$k$-covering blockade} is a sequence $(D_1,\dots,D_k,E)$ of nonempty disjoint sets of vertices such that every vertex of $D_i$ ($2\le i\le k$) has a neighbor in $D_{i-1}$ and no neighbor in $D_1\cup\dots\cup D_{i-2}$, and $E$ is anticomplete to $D_1\cup\dots\cup D_{k-1}$.
The following lemma is a simple corollary of the $k=5$ case of Nguyen's covering lemma~\cite[Lemma~3.3]{nguyen2025pairs}.

\begin{lemma}\label{lem:nguyen}
Let $G$ be a non-complete $P_6$-free graph with $\omega(G)=w\ge 2$. Then $G$ has an anticomplete pair $(A,B)$ such that $A$ and $B$ are connected and $\chi(A),\chi(B)\ge w^{-40}\chi(G)$.
\end{lemma}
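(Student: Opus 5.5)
We want to deduce \cref{lem:nguyen} from the $k=5$ case of Nguyen's covering lemma~\cite[Lemma~3.3]{nguyen2025pairs}. As we recall it, that lemma says: for $k\ge1$ and a graph $G$ with $\omega(G)=w$ and $\chi(G)$ large, either $G$ contains a $k$-covering blockade $(D_1,\dots,D_k,E)$ in which $D_1$ is connected (or a single vertex) and $\chi(D_k),\chi(E)\ge w^{-O(k)}\chi(G)$, or some anticomplete pair of large chromatic number already exists. We will take the bound with exponent $8k=40$. The plan is to apply it with $k=5$ and then use $P_6$-freeness to turn the blockade into an anticomplete pair of connected sets.

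The first step is the trivial regime. If $\chi(G)\le w^{40}$, take any non-edge $uv$ (one exists since $G$ is non-complete). Then $A=\{u\}$ and $B=\{v\}$ are connected and anticomplete, and $\chi(A)=\chi(B)=1\ge w^{-40}\chi(G)$. So we may assume $\chi(G)>w^{40}$ and invoke the covering lemma. We obtain a $5$-covering blockade $(D_1,\dots,D_5,E)$ with $\chi(D_5),\chi(E)\ge w^{-40}\chi(G)$. By choosing components we may replace $E$ by a connected component $E'$ of $G[E]$ with $\chi(E')=\chi(E)$, and likewise $D_5$ by a component $D_5'$ with $\chi(D_5')=\chi(D_5)$.

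The key step is to show that $D_5'$ is anticomplete to $E'$, or at least to extract a high-$\chi$ part of $D_5'$ that is. Suppose $x\in D_5$ is adjacent to $y\in E$. Walk back from $x$: pick $x_4\in D_4$ adjacent to $x$, then $x_3\in D_3$ adjacent to $x_4$, then $x_2\in D_2$ adjacent to $x_3$, then $x_1\in D_1$ adjacent to $x_2$. By the blockade condition each $x_i$ has no neighbour in $D_1\cup\dots\cup D_{i-2}$, so $x_1\hbox{-}x_2\hbox{-}x_3\hbox{-}x_4\hbox{-}x$ is an induced path. Since $E$ is anticomplete to $D_1\cup\dots\cup D_4$, appending $y$ gives an induced $P_6$, a contradiction. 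Hence $D_5$ is anticomplete to $E$, and the pair $(A,B)=(D_5',E')$ works.

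This is the reason for taking $k=5$: it is exactly the length needed so that a single edge between $D_5$ and $E$ completes a $P_6$. The main obstacle is matching the precise form of Nguyen's lemma, namely its exponent and whether it outputs a blockade or an anticomplete pair directly. If its conclusion instead gives $\chi(D_k)$ large and $E$ with large $\chi$ but allows edges between them, the argument above handles that case. If the lemma's constant differs, we would adjust by the trivial regime, and $w\ge2$ ensures the powers behave monotonically.
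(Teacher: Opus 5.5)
Your proof is correct and is essentially the paper's proof: apply Nguyen's covering lemma with $k=5$, extend any $D_5$--$E$ edge by walking back $d_5\hbox{-}d_4\hbox{-}d_3\hbox{-}d_2\hbox{-}d_1$ to obtain an induced $P_6$ (so $D_5$ is anticomplete to $E$), and then pass to components of maximum chromatic number. For the record, the cited lemma's blockade alternative gives the stronger bound $w^{-30}$, and its other alternative is an anticomplete pair with $w^{-40}$ directly, which also needs the component step you describe; your trivial regime $\chi(G)\le w^{40}$ is harmless but not needed.
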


\begin{proof}
By~\cite[Lemma~3.3]{nguyen2025pairs} with $k=5$, either $G$ has an anticomplete pair $(A,B)$ so that $\chi(A),\chi(B)\ge w^{-40}\chi(G)$ or it has a $5$-covering blockade $(D_1,\dots,D_5,E)$ with $\chi(D_5),\chi(E)\ge w^{-30}\chi(G)$. 
In the second case, suppose some $v\in E$ has a neighbor $d_5\in D_5$. Choose $d_4\in D_4$ adjacent to $d_5$, $d_3\in D_3$ adjacent to $d_4$, $d_2\in D_2$ adjacent to $d_3$, and $d_1\in D_1$ adjacent $d_2$. 
Then $v\hbox{-}d_5\hbox{-}d_4\hbox{-}d_3\hbox{-}d_2\hbox{-}d_1$ is an induced $P_6$.  
So $D_5$ is anticomplete to $E$, and $(D_5,E)$ is an anticomplete pair. 
Finally, replacing $A$ and $B$ by components of $G[A]$ and $G[B]$ of maximum chromatic number keeps the bounds and makes both sides connected.
\end{proof}

We use the modular decomposition of Gallai~\cite{gallai1967}. A set $M\subseteq X$ is a \emph{module} of $G[X]$ if every vertex of $X\setminus M$ is complete or anticomplete to $M$; it is \emph{strong} if for every module $M'$, either $M\cap M'=\emptyset$ or one of $M$ and $M'$ is contained in the other. The strong modules of $G[X]$ form a tree under inclusion, called the \emph{modular decomposition tree}. The children of a strong module $M$ with $|M|\ge2$ are the {\em maximal strong modules} of $G[M]$.

\begin{lemma}[Gallai~\cite{gallai1967}]\label{lem:gallai}
Let $X$ be a set with $|X|\ge 2$.
\begin{enumerate}
\item If $X$ is not connected, the maximal strong modules of $G[X]$ are the vertex sets of its components.
\item If $X$ is connected but not co-connected, the maximal strong modules of $G[X]$ are the vertex sets of the components of the complement of $G[X]$, and they are pairwise complete.
\item If $X$ is connected and co-connected, the maximal strong modules of $G[X]$ partition $X$. Moreover, any two of them are complete or anticomplete to each other, and the \emph{quotient graph} (one vertex per maximal strong module, adjacency as in $G$) is connected, and every module $M\ne X$ of $G[X]$ is contained in a maximal strong module.
\end{enumerate}
\end{lemma}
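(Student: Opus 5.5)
The plan is to derive all three parts from a few elementary closure facts about modules. Two are used: being a module of $G[X]$ is the same in $G[X]$ and in its complement, and a module $M'$ that meets a set but is not contained in it has a vertex outside that set, which must be uniform on $M'$. Throughout, singletons and $X$ itself are modules.

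\emph{Part (a).} Let $C$ be the vertex set of a component of $G[X]$; it is a module, since no vertex of $X\setminus C$ has a neighbour in $C$. I first show that any module $M'$ meeting $C$ contains $C$ or is contained in $C$. Suppose $M'$ meets $C$, is not contained in $C$, and misses some vertex of $C$. Since $C$ is connected, some $u\in C\setminus M'$ is adjacent to some $v\in C\cap M'$. Then $u$ must be complete to $M'$, and this includes a vertex of $M'\setminus C$, which is impossible. Hence every component is a strong module.

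Next, let $M\ne X$ be a strong module properly containing a component. By the fact just proved, $M$ is a union $C_1\cup\dots\cup C_j$ of components with $2\le j<$ (number of components). Then $C_1\cup C_{j+1}$ is a module overlapping $M$, so $M$ is not strong. Thus the maximal strong modules are exactly the components.

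\emph{Part (b).} The modules of $G[X]$ coincide with those of its complement, so strong modules coincide as well. Applying (a) to the complement gives that the maximal strong modules are the co-components. They are pairwise complete because distinct co-components are anticomplete in the complement.

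\emph{Part (c): key claim.} This is the main step. The claim is that if $X$ is connected and co-connected, and $M_1,M_2$ are modules with $M_1\cap M_2\ne\emptyset$, $M_1\not\subseteq M_2$ and $M_2\not\subseteq M_1$, then $M_1\cup M_2$ is a module different from $X$.

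That the union is a module is routine: an outside vertex is uniform on $M_1$ and on $M_2$, and both contain a common vertex, so the two behaviours agree. For the second half, suppose $M_1\cup M_2=X$. Write $A=M_1\setminus M_2$, $B=M_2\setminus M_1$ and $I=M_1\cap M_2$, all nonempty. Each $b\in B$ is uniform on $M_1=A\cup I$, and each $a\in A$ is uniform on $M_2=B\cup I$. Fix $a\in A$.
\begin{itemize}
\item If $a$ is complete to $B\cup I$, then every $b\in B$ is adjacent to $a$, hence complete to $M_1$. Then $B$ is complete to $X\setminus B$, and the complement of $G[X]$ is disconnected.
\item Otherwise $a$ is anticomplete to $B\cup I$, and the same reasoning shows $B$ is anticomplete to $X\setminus B$, so $G[X]$ is disconnected.
\end{itemize}
Either way we contradict the hypothesis.

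\emph{Part (c): conclusion.} Let $\mathcal{M}$ be the family of maximal proper modules (maximal among modules $\ne X$). Every vertex lies in one, since singletons are proper modules.

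Members of $\mathcal{M}$ are pairwise disjoint: two distinct overlapping members would have a union that is a proper module by the claim, contradicting maximality. Each member $M\in\mathcal{M}$ is strong. Indeed, a module $M'$ overlapping $M$ but not contained in it would give either $M\cup M'\ne X$ properly containing $M$, or $M\cup M'=X$, which the claim excludes. The case $M\subsetneq M'$ forces $M'=X$, and $X$ does not overlap $M$.

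Every strong module $\ne X$ lies in some member of $\mathcal{M}$, and indeed every proper module does, which gives the last assertion of (c). Hence $\mathcal{M}$ is exactly the set of maximal strong modules, and it partitions $X$.

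Two disjoint modules $M_1,M_2$ are complete or anticomplete: each vertex of $M_2$ is uniform on $M_1$, and each vertex of $M_1$ is uniform on $M_2$, so all pairs behave alike. Finally, the quotient graph is connected because any path in $G[X]$ projects to a walk in the quotient graph.
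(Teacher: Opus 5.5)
Your proof is correct and complete. The paper gives no proof of this lemma; it is quoted from Gallai~\cite{gallai1967}, so there is no in-paper argument to compare against. Your route is the standard one:
\begin{enumerate}
\item components and co-components are strong modules, by the closure facts you prove;
\item in the connected, co-connected case, the union of two overlapping modules is a module;
\item that union cannot equal $X$, since otherwise $G[X]$ or its complement would be disconnected;
\item hence the maximal proper modules are pairwise disjoint, strong, and contain every proper module.
\end{enumerate}

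One step in (a) is left slightly implicit: that every strong module $M\neq X$ lies inside a component. It follows immediately from your first fact. $M$ meets some component $C$, so $M\subseteq C$ or $M\supseteq C$. Your second argument rules out $M\supsetneq C$, so in either case $M\subseteq C$.
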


An ordering of the vertices of a graph is \emph{perfect} if the greedy coloring along it uses $\omega(H)$ colors on every induced subgraph $H$ (with the induced ordering).

\begin{lemma}[Chv\'atal~\cite{chvatal1984}]\label{lem:chvatal}
In a $P_4$-free graph every vertex ordering is perfect.
\end{lemma}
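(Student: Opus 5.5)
The plan is to reduce to whole graphs and then induct on $|V(G)|$, using the structure of $P_4$-free graphs. Every induced subgraph of a $P_4$-free graph is $P_4$-free, and the induced ordering on it is again an ordering of a $P_4$-free graph. So it suffices to prove that for every $P_4$-free graph $G$ and every ordering of $V(G)$, the greedy coloring uses exactly $\omega(G)$ colors. It cannot use fewer, since the vertices of a maximum clique receive pairwise distinct colors; so the task is to show it uses at most $\omega(G)$. The structural input is the classical fact (Seinsche) that a $P_4$-free graph on at least two vertices is either disconnected or has disconnected complement. I would quote it, or prove it by the short standard induction: delete a vertex $v$, apply the fact to $G-v$, and use $P_4$-freeness to place $v$. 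The base case $|V(G)|=1$ is trivial.

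\emph{Disconnected case.} Write $G$ as the disjoint union of $G_1,\dots,G_m$. The color a vertex receives depends only on the colors of its earlier neighbors, and these all lie in its own component. Hence greedy on $G$ restricted to $G_j$ coincides with greedy on $G_j$ under its induced ordering. By induction the number of colors used is $\max_j \omega(G_j)=\omega(G)$.

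\emph{Join case.} Here $V(G)=V_1\cup V_2$ with $V_1$ complete to $V_2$ and both parts nonempty, and $\omega(G)=\omega(G_1)+\omega(G_2)$. Every color class is stable, so no color appears on both $V_1$ and $V_2$. The key claim concerns the colors appearing on $V_1$, listed in increasing order and identified with their ranks $1,2,\dots$. The claim is that the rank of the color of each $u\in V_1$ equals the color $u$ would receive from greedy on $G_1$ with the induced ordering; the symmetric claim holds for $V_2$.

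To prove the claim, first show that whenever $u\in V_1$ receives a color $c$ not yet used on $V_1$, $c$ exceeds every color already used on $V_1$. Suppose instead that $c<f$ for some color $f$ already used on $V_1$, say first by $w$. When $w$ was colored, $c$ was not used by any earlier $V_2$-vertex, since the set of colors used on $V_2$ only grows and $c$ is free for $u$ now. Also $c$ was not on any earlier $V_1$-vertex, since $c$ is new on $V_1$. So greedy would have given $w$ a color at most $c<f$, a contradiction.

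The same argument gives more. Let $f$ be any color already used on $V_1$ that is not on an earlier neighbor of $u$. Then $f$ lies outside the current $V_2$-colors, so $f$ is free for $u$, and therefore $u$'s color is at most $f$. Consequently $u$ receives the $V_1$-color of smallest rank not occupied by its earlier $V_1$-neighbors. This is exactly the greedy rule on $G_1$, and an induction along the ordering gives the claim.

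It follows that the number of colors on $V_1$ is the number used by greedy on $G_1$, which is $\omega(G_1)$ by induction, and likewise for $V_2$. The total is $\omega(G_1)+\omega(G_2)=\omega(G)$. I expect the main obstacle to be this join case, because colors of $V_1$ and $V_2$ interleave arbitrarily. The rank-correspondence argument above is what must be written carefully.
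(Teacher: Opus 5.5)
The paper gives no proof of this lemma; it simply cites Chv\'atal. Your argument is correct and complete, and it takes a different route from Chv\'atal's.

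\textbf{Your join case.} Two facts combine here. First, a color new to $V_1$ exceeds every color already on $V_1$, since otherwise the earlier vertex $w$ would have found $c$ free. Second, no color is shared across the join. Together they show that greedy on $G$, restricted to $V_1$, is greedy on $G_1$ up to an order-preserving relabeling, which gives $\omega(G_1)+\omega(G_2)$ colors in total. When you write it up, state explicitly that the ``consequently'' step uses the first fact again, to rule out $u$ receiving a new color smaller than $f$.

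\textbf{Where $P_4$-freeness enters.} The union/join bookkeeping holds for arbitrary graphs. $P_4$-freeness is used only through Seinsche's decomposition, which the paper already states as \cref{lem:P4}. So your route needs only tools present in the paper, and it mirrors the cograph induction in the proof of \cref{thm:cgfull}.

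\textbf{Chv\'atal's route.} He proves the stronger statement that an ordering is perfect whenever it contains no \emph{obstruction}, i.e.\ no induced $P_4$ $a\hbox{-}b\hbox{-}c\hbox{-}d$ with $a$ before $b$ and $d$ before $c$. He argues directly with the greedy coloring rather than through a decomposition. That covers the whole class of perfectly orderable graphs. Your proof is tied to the cograph structure, but it is shorter and more transparent for the case the paper needs: \cref{lem:roots} only uses that greedy along a fixed order uses at most $\omega$ colors on a $P_4$-free graph.
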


\begin{lemma}\label{lem:contract}
Let $t\ge1$ and $G$ be a $P_t$-free graph. 
Let $A_1,\dots,A_k$ be pairwise disjoint and pairwise anticomplete connected sets, and let $U\subseteq V(G)\setminus(A_1\cup\dots\cup A_k)$. Let $H$ be the graph obtained from $G[U]$ by adding pairwise non-adjacent new vertices $z_1,\dots,z_k$ with $N_H(z_i)=U\cap N(A_i)$. Then $H$ is $P_t$-free.
\end{lemma}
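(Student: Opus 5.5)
We argue by contradiction: suppose $H$ contains an induced path $P=p_1\hbox{-}p_2\hbox{-}\cdots\hbox{-}p_t$. From $P$ we will build an induced $P_t$ in $G$ by replacing each new vertex on $P$ with a vertex of the corresponding $A_i$. This is exactly what connectedness and pairwise anticompleteness of the $A_i$ permit. Vertices of $P$ that lie in $U$ are kept as they are. Since the $z_i$ are pairwise non-adjacent in $H$, no two new vertices are consecutive on $P$. A vertex $z_i$ on $P$ therefore has at most two neighbours on $P$, and these are its neighbours $p_{j-1},p_{j+1}$ along the path, both in $U\cap N(A_i)$.

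The replacement for each $z_i=p_j$ on $P$ is chosen as follows.
\begin{itemize}
\item If $z_i$ is an endpoint of $P$, say $j=1$, pick $a\in A_i$ adjacent to $p_2$ and replace $z_i$ by $a$.
\item If $z_i$ is interior, consider $A_i\cup\{p_{j-1},p_{j+1}\}$. This set is connected, because $A_i$ is connected and both $p_{j\pm1}$ have neighbours in $A_i$. Take a shortest path $Q$ from $p_{j-1}$ to $p_{j+1}$ whose interior lies in $A_i$. Its interior is nonempty, since $p_{j-1}\not\sim p_{j+1}$ (the path $P$ is induced in $H$, and adjacency between $U$-vertices is the same in $H$ as in $G$). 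Replace $z_i$ by the interior of $Q$.
\end{itemize}
Let $P'$ be the resulting walk in $G$. Its length is at least $t$, so it suffices to show that $P'$ is an induced path; an induced subpath on $t$ vertices then gives the contradiction.

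The main step is checking that $P'$ has no chords. There are three kinds of potential chords.
\begin{itemize}
\item Between two vertices of $U$: adjacency is inherited from $H=G[U]\cup\{\ldots\}$, so there are no such chords.
\item Between two different blocks $A_i$ and $A_{i'}$: none, since the $A_i$ are pairwise anticomplete. Two pieces coming from the same $A_i$ cannot occur, because $z_i$ appears at most once on $P$.
\item Between a vertex $u=p_\ell\in U$ and an interior vertex of the replacement for $z_i=p_j$: if $u\sim a$ for some $a\in A_i$, then $u\in N(A_i)\cap U=N_H(z_i)$. So $u$ is adjacent to $z_i$ in $H$, which forces $\ell=j\pm1$. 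Within $Q$, minimality of the shortest path means $p_{j-1}$ and $p_{j+1}$ attach only at the ends of the interior. In the endpoint case the replacement is a single vertex, so nothing extra arises.
\end{itemize}
Hence $P'$ is induced with at least $t$ vertices, so $G$ contains an induced $P_t$, a contradiction.

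The subtle point is the shortest-path choice in the interior case. Shortestness of the path in $G[A_i\cup\{p_{j-1},p_{j+1}\}]$ ensures no chords from $p_{j\pm1}$ into the middle of $Q$, and none inside $Q$. The rest is bookkeeping from the hypotheses that the $A_i$ are disjoint, connected, pairwise anticomplete, and disjoint from $U$.
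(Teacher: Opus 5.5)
Your proof is correct and follows the paper's argument: an endpoint $z_i$ is replaced by a neighbour in $A_i$ of its unique path-neighbour, an interior $z_i$ is replaced by the interior of a shortest path through $A_i$ between its two path-neighbours, and then one checks that the result is still an induced path. Your chord check is more detailed than the paper's one-line assertion. It rests on $N_H(z_i)=U\cap N(A_i)$, on the pairwise anticompleteness of the $A_i$, and on the fact that each $z_i$ occurs at most once on $P$.
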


\begin{proof}
Let $P$ be an induced path of  $H$. If $z_i$ is an end of $P$ and $a$ is its unique neighbor on $P$, we replace $z_i$ with a neighbor of $a$ in $A_i$.
If $z_i$ has two neighbors $a,b$ on $P$, then we replace $z_i$ with the interior of a shortest path in $A_i\cup \{a,b\}$ connecting $u$ and $v$. 
It follows that the resulting path is still induced and has as many vertices as $P$.
\end{proof}

\begin{lemma}[Seinsche~\cite{seinsche1974}]\label{lem:P4}
    Every $P_4$-free graph with at least two vertices is disconnected or disconnected complement.
\end{lemma}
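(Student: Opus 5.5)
This is Seinsche's lemma, and the proof goes by induction on the number of vertices $n$ of a $P_4$-free graph $G$.

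\emph{Base and induction step.} For $n=2$ the graph is either an edge, whose complement is disconnected, or a non-edge, which is itself disconnected. For $n\ge3$, assume $G$ is connected, pick a vertex $v$ and set $G'=G-v$. The class of $P_4$-free graphs is closed under induced subgraphs and under complementation, since $P_4$ is self-complementary. If $G'$ is not connected, then because $G$ is connected, $v$ has a neighbor in every component of $G'$. If moreover $v$ is complete to $V(G')$, then $v$ is isolated in $\overline G$ and we are done. So we may assume $v$ has a non-neighbor $x$, lying in some component $C$ of $G'$.

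\emph{Finding a $P_4$ when $G'$ is disconnected.} Since $C$ is connected and $v$ has both a neighbor and the non-neighbor $x$ in $C$, there is an edge $ab$ of $C$ with $a\sim v$ and $b\not\sim v$. Let $c$ be a neighbor of $v$ in a different component of $G'$. Then $b\hbox{-}a\hbox{-}v\hbox{-}c$ is an induced $P_4$, because $c$ is anticomplete to $C$. This contradicts $P_4$-freeness.

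\emph{The case $G'$ connected.} By induction $\overline{G'}$ is disconnected, so $V(G')$ splits into nonempty sets $X,Y$ that are complete to each other in $G$. If $v$ is complete to $X$ or to $Y$, say to $X$, then $X$ is complete to $Y\cup\{v\}$ in $G$, so $\overline G$ is disconnected. Otherwise $v$ has a non-neighbor $x\in X$ and a non-neighbor $y\in Y$. Since $G$ is connected, $v$ has some neighbor $u$; say $u\in X$. Then $v\hbox{-}u\hbox{-}y\hbox{-}x$ is a path, and we check inducedness. We have $u\sim y$ and $y\sim x$ since $X$ and $Y$ are complete to each other, $v\not\sim y$, $v\not\sim x$, and $u\sim v$. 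The remaining pair is $u,x$. If $u\not\sim x$, the path is an induced $P_4$. If $u\sim x$, we avoid this by working in the complement: $\overline G$ restricted to $V(G')$ is disconnected, with $X$ and $Y$ in different parts, while $v$ in $\overline G$ is adjacent to $x$ and $y$. So $\overline G$ is connected, and we may apply the disconnected-$G'$ argument above to $\overline G$ with the same $v$. Its first subcase needs $v$ to have a non-neighbor in $\overline G$, which is the neighbor $u$ in $G$. That argument then produces a $P_4$ in $\overline G$, hence in $G$, a contradiction.

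The main obstacle is exactly this final case, and the symmetry trick resolves it. It is cleanest to organize the whole proof as follows: if both $G$ and $\overline G$ are connected, then $G-v$ is disconnected or co-disconnected, and by complementing we may assume $G-v$ is disconnected, which reduces everything to the argument for disconnected $G'$.
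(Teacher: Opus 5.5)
The paper gives no proof of this lemma; it only cites Seinsche, so there is no internal argument to compare with. Your induction is the standard proof, and it is correct.

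There is one small logical slip. In the case $u\sim x$ you say that $\overline G$ is connected because $v$ is $\overline G$-adjacent to $x$ and $y$. That does not follow when $\overline{G-v}$ has three or more components. For example, $X$ may consist of several co-components, and $v$ may be complete in $G$ to one of them.

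The slip is harmless, since a disconnected $\overline G$ is exactly the desired conclusion. Your closing reorganization handles it cleanly:
\begin{itemize}
\item assume both $G$ and $\overline G$ are connected;
\item complement if $G-v$ is co-disconnected;
\item run the argument for disconnected $G-v$.
\end{itemize}
That version also makes the $u\sim x$ versus $u\not\sim x$ case split unnecessary.
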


\section{Dominated pairs}\label{sec:tools}

\begin{definition}
A \emph{dominated pair} $(Y,R)$ in a graph $G$ consists of disjoint sets $Y,R\subseteq V(G)$ such that $G[Y\cup R]$ is $P_6$-free, $R$ is stable, and every vertex of $Y$ has a neighbor in $R$. The vertices of $R$ are called \emph{points}. For $y\in Y$ the \emph{type} of $y$ is $S(y)=N(y)\cap R$. For $r\in R$ the \emph{star} of $r$ is $I_r=N(r)\cap Y$. We write
$M(Y,R)=\max_{r\in R}\chi(I_r)$.

Two vertices $y,y'\in Y$ form a \emph{far pair} if $y\not\sim y'$ and $S(y)\cap S(y')=\emptyset$. For $z\in Y$ we put
\[
Q(z)=\{y\in Y\setminus N[z]:\ S(y)\cap S(z)\ne\emptyset\},\]
\[
F(z)=\{y\in Y\setminus N[z]:\ S(y)\cap S(z)=\emptyset\}.
\]
Clearly, $Y=N_Y[z]\cup Q(z)\cup F(z)$. For $X\subseteq Y$ we put $Q_X(z)=Q(z)\cap X$.
\end{definition}

If $(Y,R)$ is a dominated pair and $Y'\subseteq Y$, then $(Y',R)$ is a dominated pair with $M(Y',R)\le M(Y,R)$. 
In this section, we stipulate that $(Y,R)$ is a dominated pair, $M=M(Y,R)$,  $w=\omega(Y)$, and $G=G[Y\cup R]$.

\subsection{Non-neighbors that share a point}

For $z\in Y$ and $y\in Q(z)$, we call $S(y)\cap S(z)$ the \emph{trace} of $y$ and $S(y)\setminus S(z)$ its \emph{outer part}. Two traces \emph{cross} if neither contains the other.

\begin{lemma}\label{lem:crossing}
Let $z\in Y$ and $y,y'\in Q(z)$ be non-adjacent with crossing traces. Then $y$ and $y'$ have the same outer part.
\end{lemma}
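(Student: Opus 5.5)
We argue by contradiction and build an induced $P_6$ from the two crossing traces. Write $T=S(y)\cap S(z)$ and $T'=S(y')\cap S(z)$. Since the traces cross, there are points $a\in T\setminus T'$ and $b\in T'\setminus T$. Thus $a\sim y$, $a\sim z$, $a\not\sim y'$, and $b\sim y'$, $b\sim z$, $b\not\sim y$. Since $y,y'\in Q(z)$, both are non-adjacent to $z$, and $y\not\sim y'$ by hypothesis. Because $R$ is stable, $a\not\sim b$. Hence
\[
y\hbox{-}a\hbox{-}z\hbox{-}b\hbox{-}y'
\]
is an induced $P_5$. We check all pairs: $y\not\sim z$, $y\not\sim b$, $y\not\sim y'$, $a\not\sim b$, $a\not\sim y'$, and $z\not\sim y'$.

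Now suppose the outer parts differ. By symmetry we may assume some point $r\in S(y)\setminus S(z)$ is not in $S(y')$. Then $r\sim y$, $r\not\sim z$ (as $r\notin S(z)$), and $r\not\sim y'$. Also $r$ is non-adjacent to the points $a,b$, since $R$ is stable. So $r$ is adjacent only to the end $y$ of the $P_5$, and
\[
r\hbox{-}y\hbox{-}a\hbox{-}z\hbox{-}b\hbox{-}y'
\]
is an induced $P_6$ in $G[Y\cup R]$. This contradicts the definition of a dominated pair. The case with the roles of $y$ and $y'$ swapped is identical, with the extra point attached at the $y'$ end.

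Therefore $S(y)\setminus S(z)\subseteq S(y')$ and $S(y')\setminus S(z)\subseteq S(y)$. Both outer parts lie outside $S(z)$, so these inclusions give $S(y)\setminus S(z)=S(y')\setminus S(z)$, as claimed.

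There is no real obstacle. The only care needed is in checking inducedness: every non-edge comes from stability of $R$, the definition of $Q(z)$, the hypothesis $y\not\sim y'$, or the choice of $a$, $b$ and $r$.
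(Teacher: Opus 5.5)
Your proposal is correct and is essentially the paper's proof: you pick points in each trace not in the other, attach an outer point of $y$ missing from $S(y')$ to get the induced $P_6$ $r\hbox{-}y\hbox{-}a\hbox{-}z\hbox{-}b\hbox{-}y'$, and conclude by symmetry. Your explicit check of the non-edges (and of distinctness, since $r\notin S(z)$) is exactly what the paper leaves implicit.
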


\begin{proof}
Choose $r\in S(y)\cap S(z)\setminus S(y')$ and $r'\in S(y')\cap S(z)\setminus S(y)$. If some $s\in S(y)$ lies outside $S(z)\cup S(y')$, then $s\hbox{-}y\hbox{-}r\hbox{-}z\hbox{-}r'\hbox{-}y'$ is an induced $P_6$.
Hence the outer part of $y$ is contained in outer part of $y'$. By symmetry, the two outer parts are equal.
\end{proof}

\begin{lemma}\label{lem:Q}
Let $X\subseteq Y$ and $z\in X$ be such that no vertex of $X$ has type strictly contained in $S(z)$. Then every $D\subseteq Q_X(z)$ satisfies $\chi(D)\le 2M\omega(D)$.
\end{lemma}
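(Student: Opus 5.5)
The plan is to prove the bound by induction on $\omega(D)$. Each step will spend at most $2M$ colours on two stars and pass to a subset of $D$ that is complete to a fixed vertex, so that the clique number drops by one. If $D=\emptyset$ there is nothing to prove. If $\omega(D)=1$, then $D$ is stable and $\chi(D)\le 1\le 2M$. Here $M\ge1$, because any $y\in D$ lies in $I_r$ for every $r\in S(y)\cap S(z)\neq\emptyset$. For $y\in D$ write $T(y)=S(y)\cap S(z)$ for its trace; it is nonempty since $D\subseteq Q(z)$.

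First we record the key consequence of the hypothesis on $X$. If $y\in D$ has empty outer part, then $S(y)\subseteq S(z)$, and the hypothesis forces $S(y)=S(z)$, so $T(y)=S(z)$ is the largest possible trace. Now choose $y_0\in D$ whose trace $T_0=T(y_0)$ is inclusion-minimal among the traces of vertices of $D$, and fix $r_0\in T_0$.

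The first colour class group is $D\cap I_{r_0}$, which contains $y_0$ and needs at most $M$ colours. Let $D'=D\setminus I_{r_0}$. For $y\in D'$ we have $r_0\notin T(y)$, so $T(y)\not\supseteq T_0$. By minimality of $T_0$, we also have $T(y)\not\subsetneq T_0$, and $T(y)\ne T_0$ since $r_0\notin T(y)$. Hence $T(y)$ and $T_0$ cross.

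Let $D_1$ be the set of vertices of $D'$ that are non-adjacent to $y_0$. By \cref{lem:crossing} applied to $y_0$ and each $y\in D_1$, every such $y$ has the same outer part $O_0=S(y_0)\setminus S(z)$ as $y_0$. If $O_0=\emptyset$, the observation above gives $T_0=S(z)$, which contains every trace; this contradicts crossing, so then $D_1=\emptyset$. Otherwise pick $s\in O_0$. Every vertex of $D_1$ is adjacent to $s$, so $D_1\subseteq I_s$ and $M$ further colours suffice.

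The remaining set is $D''=D'\setminus D_1$, which lies in $N(y_0)$. Any clique of $D''$ extends by $y_0$ to a clique of $D$, so $\omega(D'')\le\omega(D)-1$. Since $D''\subseteq Q_X(z)$, the induction hypothesis gives $\chi(D'')\le 2M(\omega(D)-1)$. Using disjoint colour sets for $D\cap I_{r_0}$, $D_1$ and $D''$ then yields $\chi(D)\le M+M+2M(\omega(D)-1)=2M\omega(D)$.

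The main obstacle is choosing $y_0$ so that every non-neighbour of $y_0$ outside a single star has a trace crossing $T_0$; this is what makes \cref{lem:crossing} applicable. Inclusion-minimality of the trace handles this. The hypothesis on $X$ is used exactly once, to exclude the degenerate case $O_0=\emptyset$, where no point $s$ would be available to cover $D_1$.
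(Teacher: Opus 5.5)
Your proof is correct and follows essentially the same route as the paper. Both induct on $\omega(D)$, pick a vertex of inclusion-minimal trace, and use \cref{lem:crossing} together with the hypothesis on $X$ to cover its non-neighbours by the stars of one trace point and one outer point, then recurse on its neighbourhood. The only difference is that you remove all of $D\cap I_{r_0}$ first rather than only the non-neighbours in it, which changes nothing.
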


\begin{proof}
We use induction on $\omega(D)$. 
Choose $a\in D$ whose trace is minimal under inclusion among  vertices of $D$.
Choose a point $x$ in the trace of $a$, and if the outer part of $a$ is nonempty, choose a point $s$ in it. 
We claim that $D\setminus N(a)\subseteq I_x\cup I_s$ (with $I_s=\emptyset$ if $s$ is undefined). Let $y\in D\setminus N(a)$. If the trace of $y$ contains that of $a$, then $y\in I_x$. 
By the choice of $a$, 
the trace of $y$ is not strictly inside that of $a$. 
So the two traces cross and \cref{lem:crossing} shows that $y$ and $a$ have the same outer part. If it is nonempty, $y\in I_s$. If it is empty, then $S(y)\subseteq S(z)$ and so $S(y)=S(z)$ by the hypothesis on $X$. Then the trace of $y$ is $S(z)$, which contains the trace of $a$. This is a contradiction. 
This proves the claim, and so $\chi(D\setminus N(a))\le 2M$. Since $D\cap N(a)\subseteq Q_X(z)$ has clique number at most $\omega(D)-1$, induction gives $\chi(D)\le 2M+2M(\omega(D)-1)=2M\omega(D)$.
\end{proof}

\begin{theorem}\label{thm:nofar}
If $(Y,R)$ has no far pair, then $\chi(Y)\le (w^2+w)M+w$.
\end{theorem}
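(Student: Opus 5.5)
The proof goes by induction on $w=\omega(Y)$, peeling off one vertex $z$ of minimal type together with its non-neighbourhood and recursing into $N_Y(z)$. Both the no-far-pair property and the dominated-pair property pass to subsets, so the induction applies to any $X\subseteq Y$. I will show that $\chi(X)\le f(\omega(X))$ for every nonempty $X\subseteq Y$, where $f(1)=1$ and $f(k)=f(k-1)+2Mk+1$. If $\omega(X)=1$ then $X$ is stable and $\chi(X)=1$.

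For the inductive step, let $X\subseteq Y$ with $\omega(X)=k\ge 2$. Choose $z\in X$ whose type $S(z)$ is minimal under inclusion among the types of vertices of $X$. Then no vertex of $X$ has type strictly contained in $S(z)$, which is exactly the hypothesis of \cref{lem:Q} for this $X$ and $z$. Since $(Y,R)$ has no far pair, every $y\in X\setminus N[z]$ shares a point with $z$, so $F(z)\cap X=\emptyset$. Hence
\[
X=\{z\}\cup N_X(z)\cup Q_X(z).
\]
I bound the three pieces separately:
\begin{itemize}
\item The singleton $\{z\}$ uses one colour.
\item By \cref{lem:Q} applied with $D=Q_X(z)$, we get $\chi(Q_X(z))\le 2M\,\omega(Q_X(z))\le 2Mk$.
\item The set $N_X(z)$ has clique number at most $k-1$, because $z$ is complete to it. It is either empty or, by induction, satisfies $\chi(N_X(z))\le f(k-1)$ (using that $f$ is increasing).
\end{itemize}
Colouring the three pieces with disjoint palettes gives $\chi(X)\le f(k-1)+2Mk+1=f(k)$.

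Solving the recurrence gives
\[
f(w)=1+\sum_{k=2}^{w}(2Mk+1)=w+M(w^2+w-2)\le (w^2+w)M+w,
\]
which is the claimed bound when applied to $X=Y$.

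The only delicate point is verifying the hypothesis of \cref{lem:Q}. It must hold relative to the current set $X$, not to $Y$, which is why $z$ is re-chosen as a type-minimal vertex at every stage of the recursion rather than fixed once at the start. \cref{lem:Q} is formulated for arbitrary $X\subseteq Y$, so this re-choice is legitimate. The rest is bookkeeping.
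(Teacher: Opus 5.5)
Your proof is correct and is essentially the paper's argument. You pick a type-minimal $z$, observe $F(z)\cap X=\emptyset$ from the no-far-pair hypothesis, bound $Q_X(z)$ by \cref{lem:Q}, and recurse into $N(z)$. The only difference is that you unroll the recurrence from $f(1)=1$, which gives the slightly sharper $w+M(w^2+w-2)$, whereas the paper inducts directly on the stated bound.
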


\begin{proof}
Induction on $w$. Choose $z\in Y$ whose type is minimal under inclusion. As there is no far pair, $Y\setminus N[z]=Q_Y(z)$. \cref{lem:Q} with $X=Y$ gives $\chi(Q_Y(z))\le 2wM$. The set $N(z)\cap Y$ has no far pair and clique number at most $w-1$. By inductive hypothesis, $\chi(Y)\le \bigl(((w-1)^2+(w-1))M+(w-1)\bigr)+1+2wM=(w^2+w)M+w$.
\end{proof}

\subsection{Capped pairs}

A \emph{cap} of $(Y,R)$ is a vertex $c\notin Y\cup R$ that is adjacent to every vertex of $R$ and to no vertex of $Y$ such that $G[Y\cup R\cup\{c\}]$ is $P_6$-free.

\begin{lemma}\label{lem:capkey}
Let $c$ be a cap of $(Y,R)$, $a,b,b'\in Y$ with $b\sim b'$, $a\not\sim b$, $a\not\sim b'$, and $S(b)\cup S(b')$ disjoint from $S(a)$. Then $S(b)=S(b')$.
\end{lemma}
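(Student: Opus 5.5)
Suppose for a contradiction that $S(b)\ne S(b')$. By symmetry between $b$ and $b'$ we may pick a point $r\in S(b)\setminus S(b')$. The plan is to find an induced $P_6$ in $G[Y\cup R\cup\{c\}]$ that uses $a$, $b$, $b'$, the cap $c$, and two points.

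First pick any $s\in S(a)$, which exists because every vertex of $Y$ has a neighbor in $R$. Since $S(a)$ is disjoint from $S(b)\cup S(b')$, we know $s\notin S(b)\cup S(b')$, so $s$ is adjacent to neither $b$ nor $b'$. Also $r\notin S(a)$, so $r\not\sim a$, and $r\not\sim b'$ by the choice of $r$. The cap $c$ is adjacent to both $r$ and $s$, and to none of $a,b,b'$. Since $R$ is stable, $r\not\sim s$.

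The candidate path is $a\hbox{-}s\hbox{-}c\hbox{-}r\hbox{-}b\hbox{-}b'$. Its consecutive pairs are edges: $a\sim s$, $s\sim c$, $c\sim r$, $r\sim b$ and $b\sim b'$. Each non-consecutive pair is a non-edge:
\begin{itemize}
\item $a$ versus $c,r,b,b'$: $c$ has no neighbor in $Y$, $r\notin S(a)$, and $a\not\sim b$, $a\not\sim b'$ by hypothesis;
\item $s$ versus $r,b,b'$: $R$ is stable, and $s\notin S(b)\cup S(b')$;
\item $c$ versus $b,b'$: $c$ has no neighbor in $Y$;
\item $r$ versus $b'$: $r\notin S(b')$.
\end{itemize}
So the path is an induced $P_6$, contradicting the definition of a cap. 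Hence $S(b)=S(b')$.

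The only point needing care is that $r$ must come from the symmetric difference of $S(b)$ and $S(b')$, so that $r$ is non-adjacent to the far end $b'$; the rest is a routine check of the pairs listed above.
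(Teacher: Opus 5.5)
Your proof is correct and is essentially the paper's own argument: the paper uses the same induced path, written in reverse as $b'\hbox{-}b\hbox{-}\ell_b\hbox{-}c\hbox{-}\ell_a\hbox{-}a$ with $\ell_b\in S(b)\setminus S(b')$ and $\ell_a\in S(a)$, and concludes $S(b)\subseteq S(b')$ with the reverse inclusion by symmetry. Your explicit check of the non-edges is complete and accurate.
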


\begin{proof}
If $\ell_b\in S(b)\setminus S(b')$ and $\ell_a\in S(a)$, then $b'\hbox{-}b\hbox{-}\ell_b\hbox{-}c\hbox{-}\ell_a\hbox{-}a$ is an induced $P_6$. So $S(b)\subseteq S(b')$, and the reverse inclusion holds by symmetry.
\end{proof}

\begin{theorem}\label{thm:cap}
If $(Y,R)$ has a cap, then $\chi(Y)\le (w^2+2w)M$.
\end{theorem}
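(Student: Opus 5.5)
The plan is to copy the induction on $w$ from the proof of \cref{thm:nofar}, using \cref{lem:capkey} to handle the far vertices $F(z)$, which are now allowed to exist. The base case $Y=\emptyset$ (or $w\le 1$) is trivial: a stable $Y$ has $\chi(Y)\le 1\le 3M$ whenever $Y\neq\emptyset$, since then $M\ge 1$. The inductive step rests on two observations: for any $Y'\subseteq Y$, $(Y',R)$ is again a dominated pair, and the same vertex $c$ remains a cap of it, because $G[Y'\cup R\cup\{c\}]$ is an induced subgraph of a $P_6$-free graph.

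Choose $z\in Y$ whose type $S(z)$ is minimal under inclusion, and write
\[
Y=N_Y[z]\cup Q(z)\cup F(z).
\]
The three parts are handled as follows.
\begin{enumerate}
\item For $Q(z)=Q_Y(z)$, apply \cref{lem:Q} with $X=Y$. Its hypothesis holds by the minimality of $S(z)$, so $\chi(Q(z))\le 2wM$.
\item For $F(z)$, apply \cref{lem:capkey} with $a=z$. Take any edge $bb'$ inside $F(z)$. Both ends are non-adjacent to $z$, and both types are disjoint from $S(z)$, so the lemma gives $S(b)=S(b')$. Hence every connected component $K$ of $G[F(z)]$ consists of vertices of one common type $S_K$. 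Since $S_K\neq\emptyset$, we may pick $r\in S_K$, and then $K\subseteq I_r$. Coloring the components separately gives $\chi(F(z))\le M$.
\item The vertex $z$ itself is anticomplete to $Q(z)\cup F(z)$. It can therefore reuse any color already used on $F(z)$ (or on $Q(z)$), so it needs no new color.
\item The set $N(z)\cap Y$ has clique number at most $w-1$ and still carries the cap $c$. Induction therefore colors it with $((w-1)^2+2(w-1))M$ colors.
\end{enumerate}

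Adding these up gives
\[
\chi(Y)\le \bigl((w-1)^2+2(w-1)\bigr)M+2wM+M=(w^2+2w)M,
\]
which is the claimed bound.

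I expect the only real point to check is the $F(z)$ step: that adjacency inside $F(z)$ forces equal types, and hence that each component lies inside a single star. \cref{lem:capkey} delivers exactly this. The remaining steps are bookkeeping, including confirming that no extra color is needed for $z$. That is what makes the constant come out as $w^2+2w$ rather than picking up an additive term.
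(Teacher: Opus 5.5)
Your proof is correct and is essentially the paper's own argument. Both use induction on $w$, choose a vertex of inclusion-minimal type, bound $Q(z)$ by \cref{lem:Q}, use \cref{lem:capkey} to place each component of $G[F(z)]$ inside a single star, and recurse on $N(z)\cap Y$ with the same cap. The paper phrases your step 3 as $\chi(F(z)\cup\{z\})\le M$, treating $\{z\}$ as one more component lying in a star; this also covers the case where $F(z)$ and $Q(z)$ are both empty, so there is no ``already used'' color for $z$ to reuse.
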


\begin{proof}
Induction on $w$. Choose $a\in Y$ whose type is minimal under inclusion. Then $Y\setminus N[a]=Q(a)\cup F(a)$. By \cref{lem:Q} with $X=Y$, $\chi(Q_Y(a))\le 2wM$. By \cref{lem:capkey}, adjacent vertices of $F(a)$ have equal types, and so all vertices in each component of $G[F(a)]$ have the same type. This implies that for each component of $G[F(a)]$, there exists a vertex in $R$ complete to it. So
$\chi(F(a)\cup\{a\})\le M$. 
The pair $(N(a)\cap Y,R)$ has the same cap and clique number at most $w-1$. So $\chi(Y)\le ((w-1)^2+2(w-1))M+2wM+M=(w^2+2w)M$.
\end{proof}

\begin{corollary}\label{cor:capt}
For every $t\in Y$, $\chi(Q(t))\le (w^2+4w)M$.
\end{corollary}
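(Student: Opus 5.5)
The idea is to split $Q(t)$ according to whether a vertex has a point outside $S(t)$. One part is handled by a cap (namely $t$ itself) via \cref{thm:cap}. The other part is handled by \cref{lem:Q}, once $t$ is attached so that the minimality hypothesis holds.

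Fix $t\in Y$ and partition $Q(t)=A\cup B$, where
\[
A=\{y\in Q(t): S(y)\subseteq S(t)\},\qquad B=\{y\in Q(t): S(y)\not\subseteq S(t)\}.
\]
So $B$ consists of the vertices of $Q(t)$ with nonempty outer part. We bound $\chi(A)\le (w^2+2w)M$ and $\chi(B)\le 2wM$. Adding the two bounds gives $\chi(Q(t))\le (w^2+4w)M$.

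\emph{The part $A$.} Consider the pair $(A,S(t))$. Every $y\in A$ lies in $Q(t)$, so it has a neighbor in $S(t)$. Also $S(t)\subseteq R$ is stable, and $G[A\cup S(t)]$ is $P_6$-free as an induced subgraph of $G$. Hence $(A,S(t))$ is a dominated pair.
\begin{itemize}
\item Since $S(y)\subseteq S(t)$ for $y\in A$, types in this pair coincide with types in $(Y,R)$.
\item Each star is $N(r)\cap A\subseteq I_r$ for some $r\in S(t)\subseteq R$, so the new value of $M$ is at most $M$.
\item The clique number of $A$ is at most $w$.
\end{itemize}
Now $t\notin A\cup S(t)$, $t$ is adjacent to every point of $S(t)$, and $t$ has no neighbor in $A\subseteq Y\setminus N[t]$. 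Moreover $G[A\cup S(t)\cup\{t\}]$ is an induced subgraph of $G$, hence $P_6$-free. So $t$ is a cap of $(A,S(t))$, and \cref{thm:cap} gives $\chi(A)\le (w^2+2w)M$. (The bound in \cref{thm:cap} is monotone in the clique number, so $\omega(A)\le w$ suffices.)

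\emph{The part $B$.} Apply \cref{lem:Q} with $X=B\cup\{t\}$ and $z=t$. Every vertex of $B$ has a point outside $S(t)$, so its type is not contained in $S(t)$, and in particular not strictly contained in it. The type of $t$ itself is not strictly contained in $S(t)$ either. Thus the hypothesis of \cref{lem:Q} holds. Also $Q_X(t)=Q(t)\cap X=B$, so \cref{lem:Q} with $D=B$ yields $\chi(B)\le 2M\omega(B)\le 2wM$.

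The only step needing care is checking that $t$ really is a cap and that the parameters of the auxiliary pair do not exceed $M$ and $w$. Everything else is a direct application of the earlier results.
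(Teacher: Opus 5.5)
Your proof is correct and is essentially the paper's argument. The paper likewise uses \cref{thm:cap} with $t$ as the cap for the vertices of $Q(t)$ whose type lies inside $S(t)$, and \cref{lem:Q} for the rest. The only difference is cosmetic: the paper removes from $X$ only the vertices whose type is strictly inside $S(t)$, so the vertices with $S(y)=S(t)$ fall into the \cref{lem:Q} part, whereas you put them into the cap part, which gives a genuine partition and works equally well.
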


\begin{proof}
Let $D(t)$ be the set of non-neighbors of $t$ in $Y$ whose type is contained in $S(t)$, and $P(t)\subseteq D(t)$ be the set of those whose type is strictly contained in $S(t)$. The pair $(D(t),S(t))$ is a dominated pair whose types are the original ones and whose stars lie inside the original stars, and $t$ is a cap of it. So $\chi(D(t))\le (w^2+2w)M$ by \cref{thm:cap}. Let $X=\{t\}\cup (Y\setminus (N[t]\cup P(t)))$. No vertex of $X$ has type strictly inside $S(t)$ and $Q_X(t)=Q(t)\setminus P(t)$.
\cref{lem:Q} gives $\chi(Q(t)\setminus P(t))\le 2wM$. As $P(t)
\subseteq D(t)$, $\chi(Q(t))\le 2wM+(w^2+2w)M$.
\end{proof}

\section{Stable stars}\label{sec:stablestars}

In this section we prove the following.

\begin{theorem}\label{thm:stablestars}
Let $(Y,R)$ be a dominated pair in which every star is stable. Then $\chi(Y)\le 5\,\omega(Y)^3$.
\end{theorem}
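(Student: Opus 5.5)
The plan is to apply the tools of \cref{sec:tools} with $M=1$: every star $I_r$ is stable, so $M(Y',R)\le 1$ for every $Y'\subseteq Y$. Write $w=\omega(Y)$. By \cref{thm:nofar}, if $(Y,R)$ has no far pair, then $\chi(Y)\le w^2+2w$. By \cref{cor:capt}, $\chi(Q(t))\le w^2+4w$ for every $t\in Y$. So the only real work is to control the far part $F(z)$, and I will proceed by induction on $w$, aiming for $\chi(Y)\le 5w^3$.

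I pick $z\in Y$ whose type is minimal under inclusion and use the decomposition $Y=N_Y[z]\cup Q(z)\cup F(z)$.
\begin{enumerate}
\item The neighbourhood $N_Y(z)$ has clique number at most $w-1$, so induction gives $\chi(N_Y(z))\le 5(w-1)^3$.
\item Since $z$ has minimal type, \cref{lem:Q} with $X=Y$ gives $\chi(Q(z))\le 2w$.
\item It remains to show $\chi(F(z))\le 5w^3-5(w-1)^3-2w-1\approx 15w^2$. So a bound of order $w^2$ on $F(z)$ is exactly what the induction needs.
\end{enumerate}

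To bound $F(z)$, I would exploit the stability of the stars. Fix $r\in S(z)$. The set $I_r$ is stable, and every vertex of $F(z)$ is nonadjacent to $z$ and has a type disjoint from $S(z)$.

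The key observation I hope to prove is the following. Take $y\in F(z)$ with a neighbour $u\in I_r$ such that $u$ is adjacent to $y$ but not to $z$. Then for any $s\in S(y)\setminus S(u)$, the sequence $z\hbox{-}r\hbox{-}u\hbox{-}y\hbox{-}s$, extended by one further vertex, is an induced path. This should severely restrict the structure.

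The intended consequence is a split of $F(z)$ into two parts:
\begin{itemize}
\item vertices with no neighbour in $N_Y(z)\cap\bigcup_{r\in S(z)}I_r$;
\item vertices that do have such a neighbour.
\end{itemize}
For the first part, $r$ (for any $r\in S(z)$) acts like a cap relative to an appropriate subpair. Then \cref{thm:cap} gives a bound of $w^2+2w$. For the second part, I would cover it by sets $Q(t)$ for boundedly many $t$, or argue via \cref{lem:capkey}-type $P_6$ arguments. This should yield $O(w^2)$ through \cref{cor:capt}, which costs $w^2+4w$ per application.

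Concretely, the budget is about three applications of quadratic bounds plus lower-order terms, which fits within $15w^2$.

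The main obstacle is this second part of $F(z)$: controlling far vertices reached through neighbours of $z$ in the stars of $S(z)$. If the direct cap argument fails there, I would try an alternative. Choose a maximal family of pairwise far vertices and use $P_6$-freeness to show that it has bounded size, say at most $3$, in a suitable sense. Then $Y$ is covered by $N[t_i]\cup Q(t_i)$ over the members $t_i$ of the family, and induction is applied to the neighbourhoods. The bounded size of the family is what would produce the constant $5$.
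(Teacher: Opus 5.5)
\textbf{There is a genuine gap: your third step, $\chi(F(z))=O(w^2)$, is unproved, and for an arbitrary $z$ of minimal type it cannot be proved without a stronger theorem.} Your bounds on $N_Y(z)$ (by induction) and on $Q(z)$ (by \cref{lem:Q} with $M\le 1$) are correct. One inductive call plus an $O(w^2)$ remainder is also the right accounting for a cubic bound. But the remainder bound would imply a quadratic version of the theorem itself. To see this, take any dominated pair $(Y_0,R_0)$ with stable stars and $\omega(Y_0)=w-1$. Add a vertex $u$ complete to $Y_0$, a vertex $z$ adjacent only to $u$, and new points $r_u,r_z$ whose only neighbours are $u$ and $z$ respectively. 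Every vertex of $Y\setminus\{u\}$ is adjacent to $u$, so an induced path through $u$ contains at most two such vertices and at most one point beyond each; it therefore has at most five vertices. Hence the new graph is $P_6$-free, the stars stay stable, $G[Y]$ is connected, $\omega(Y)=w$, $S(z)=\{r_z\}$ is minimal, $Q(z)=\emptyset$ and $F(z)=Y_0$.

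The tools you name do not reach $F(z)$. Because $z\in I_r$ and $I_r$ is stable, $N_Y(z)\cap\bigcup_{r\in S(z)}I_r=\emptyset$, so your split of $F(z)$ is trivial. A point $r$ has no neighbour in $R$, so it cannot be a cap of any pair whose points come from $R$. Nor can $z$ cap a pair containing vertices of $F(z)$, since their types avoid $S(z)$. If you instead take $I_r$ as the new point set, $r$ does become a cap. However, the new stars $N(u)\cap F(z)$ are not stable, so the factor $M$ in \cref{thm:cap} becomes the chromatic number of a neighbourhood rather than $1$. Your induced $P_5$ $z\hbox{-}r\hbox{-}u\hbox{-}y\hbox{-}s$ is correct, but it only constrains vertices of $F(z)$ with a neighbour in $Q(z)$, and in the example there are none.

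The fallback fails for a structural reason. Covering $Y$ by $N[t_i]\cup Q(t_i)$ for $k\ge 2$ vertices and applying induction in each $N(t_i)$ gives $f(w)\le k f(w-1)+O(w^2)$, which is exponential, not cubic. Pairwise far families are also unbounded even when $G[Y]$ is connected: take $K_{1,n}$ with a private point at each vertex.

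The missing idea is to handle far pairs globally rather than through $F(z)$. The paper does this in the following steps.
\begin{itemize}
\item It colours the one-point vertices $Y_1$ with $w^2$ colours using the separate structural result \cref{thm:cg}.
\item In $Y_{\ge2}$, deleting any vertex cover $U$ of the far graph $\Phi$ leaves no far pair, which costs $w^2+2w$ by \cref{thm:nofar}. So it suffices to show $\chi(U)\le 5(w-1)^3+5$.
\item If $\Phi$ is disconnected, \cref{lem:2k2} makes its components pairwise complete. Induction together with $p^3+q^3\le(p+q-1)^3+1$ then handles $U=V(\Phi)$.
\item If $\Phi$ is connected, frames and shadows produce a vertex $t$ whose shadow contains $V(\Phi)$ (\cref{lem:uct}). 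Choosing such $t$ with $|\Adj(t)\cap V(\Phi)|$ maximal, the cover $(\Adj(t)\cap V(\Phi))\cup W_t$ is a subset of $N(t)$ (the single inductive call) plus a stable set (\cref{lem:maxt}).
\end{itemize}
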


Throughout this section all dominated pairs have stable stars. We use repeatedly that
\begin{equation}\label{eq:stab}
\text{if } y\sim y' \text{ then } S(y)\cap S(y')=\emptyset.
\end{equation}

We need the following coloring lemma; its proof is in \cref{app:cg}.

\begin{theorem}\label{thm:cg}
Let $(Y,R)$ be a dominated pair with stable stars in which every vertex has exactly one point. Then $\chi(Y)\le\omega(Y)^2$.
\end{theorem}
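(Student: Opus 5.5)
The plan is induction on $w=\omega(Y)$, aiming to peel off a part of $Y$ that costs at most $2w-1$ colors and leaves a dominated pair of the same kind with clique number at most $w-1$. Since $(w-1)^2+2w-1=w^2$, this gives the bound. The base case $w=1$ is trivial because then $Y$ is stable.

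Since every vertex of $Y$ has exactly one point, $Y$ is partitioned into the stars $I_r$ ($r\in R$). Each star is stable, $r$ is complete to $I_r$, and $r$ is anticomplete to $Y\setminus I_r$. So the structure lives entirely in the bipartite graphs between pairs of stars. The first step is a basic $P_6$ configuration. Let $b\in I_r$ and $c\in I_s$ be adjacent, with $r\ne s$. Take $a\in I_r\setminus N(c)$ and $d\in I_s\setminus N(b)$. If $a\not\sim d$, then $a\hbox{-}r\hbox{-}b\hbox{-}c\hbox{-}s\hbox{-}d$ is an induced $P_6$, using stability of the stars and the fact that each vertex has only one point. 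Hence $I_r\setminus N(c)$ is complete to $I_s\setminus N(b)$ for every edge $bc$ between $I_r$ and $I_s$. I will also record the three-star analogue: for a path $b\hbox{-}c\hbox{-}e$ with $b\in I_r$, $c\in I_s$, $e\in I_t$ pairwise distinct stars, the path $a\hbox{-}r\hbox{-}b\hbox{-}c\hbox{-}e\hbox{-}t$ (or with $s$ inserted) forces adjacencies from $I_r\setminus N(b)$.

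For the inductive step, fix a vertex $z\in I_r$. The set $N(z)\cap Y$, together with $R$, is again a dominated pair with stable stars and exactly one point per vertex, and it has clique number at most $w-1$. So induction colors it with $(w-1)^2$ colors. It remains to color $Y\setminus N(z)$ with $2w-1$ colors.
\begin{itemize}
\item The part $I_r\setminus N(z)$ contains $z$ and is stable, so it takes one color.
\item For the rest $F=Y\setminus(N(z)\cup I_r)$, the vertices lie in other stars. I would split $F$ into $F_1$, the vertices having a neighbor in $I_r$, and $F_0$, the vertices anticomplete to $I_r$.
\item For $F_1$, the configuration above says that if $y\in I_s$ has a neighbor $b\in I_r$, then $I_r\setminus N(y)\ni z$ is complete to $I_s\setminus N(b)$. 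Hence $F_1\cap I_s\setminus N(z)\subseteq N(b)$ is heavily constrained. I would try to show that $F_1$ is covered by the neighborhoods of a clique inside $I_r\cup N(z)$, or more directly that $G[F_1]$ has an ordering of greedy width $w-1$.
\item For $F_0$, use $z$, $r$ and a neighbor of $z$ to forbid long paths.
\end{itemize}

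The main obstacle is exactly this step: bounding $\chi(Y\setminus N(z))$ linearly in $w$ rather than quadratically, and choosing $z$ well. I would try choosing $z$ to maximize $|N(z)\cap Y|$, or choosing the star $I_r$ to be extremal. If the linear bound fails for a single vertex, the fallback is to take $z$ to be a whole maximum clique $K$. That is, color $Y\setminus N(K)$-type sets by stars, and use the complete-to-nonneighbors property above to show that each remaining vertex of $Y$ lies in a set of clique number at most $w-1$ attached to one vertex of $K$. That would give the recursion $\chi\le w+\chi(\text{clique number }w-1)$, which also yields $\chi(Y)\le w^2$.
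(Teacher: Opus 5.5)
Your reduction $\chi(Y)\le (w-1)^2+\chi(Y\setminus N(z))$ is fine, and your two-star observation is correct; it is essentially the second case of Lemma~\ref{lem:cgfacts}(b). But the step that carries all the content, finding a $z$ with $\chi(Y\setminus N(z))\le 2w-1$, is never proved. The treatment of $F_1$ and $F_0$, the choice of $z$, and the fallback with a maximum clique are plans rather than arguments, so the proposal has a genuine gap.

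The gap is not small, because that step is stronger than the theorem itself. Take any admissible $(Y_X,R_X)$ with $\omega(Y_X)=a$. Form four copies $Y_1,\dots,Y_4$ with pairwise disjoint point sets, make $Y_1\cup Y_2$ complete to $Y_3\cup Y_4$, and add no other edges between copies. The hypotheses survive:
\begin{itemize}
\item Each of $Y_1\cup Y_2$ and $Y_3\cup Y_4$, together with its points, induces a disjoint union of two $P_6$-free graphs. So an induced $P_6$ would have to contain vertices of $Y$ on both sides.
\item Those $Y$-vertices span a complete bipartite graph inside a path. Hence there are at most three of them, and two of them are consecutive.
\item A six-vertex induced path with at most three vertices in $Y$, whose other vertices are pairwise non-adjacent points, must alternate between points and $Y$-vertices. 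That contradicts having two consecutive $Y$-vertices.
\end{itemize}
The new instance is connected with $\omega=2a$, and every vertex $z$ has an entire copy of $Y_X$ among its non-neighbours. Your step applied to it would therefore give $\chi(Y_X)\le 4a-1$ for every instance. That is a linear $\chi$-bound for the whole class, stronger than the quadratic bound you are proving once $\omega\ge 4$. Nothing in your local $P_6$ configurations points towards such a bound.

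The idea you are missing is global rather than local. The paper treats $U=G[Y]$ with the proper colouring $\gamma$ given by the points. It shows (Theorem~\ref{thm:cgsplit}) that for $|V(U)|\ge 2$ either $U$ or $D(U,\gamma)$ is disconnected, where $D$ joins non-adjacent pairs of different colours. The proof uses the laminar family $\nu(u)=V_j\setminus N(u)$ and a minimal-counterexample analysis of the sets $C_j,R_j,J_j$. Applying this recursively shows that $U$ is a cograph $H$ with its monochromatic edges deleted.

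After that there is no induction on $\omega$ at all. Every edge of $U$ is comparable both in the transitive orientation of $H$ and in a fixed linear order of the colours. So $E(U)$ splits into the edges where the two orders agree and those where they disagree. Each part is a comparability graph, hence $\omega(U)$-colourable, and the product colouring gives $\omega(U)^2$.
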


\subsection{Proof of \texorpdfstring{\cref{thm:cg}}{Theorem 4.2}}\label{app:cg}

Let $U$ be a graph with a proper coloring $\gamma$ with color classes $V_j$. Let $U^+$ be obtained from $U$ by adding, for each color $j$, a new vertex $z_j$ with $N(z_j)=V_j$ such that The new vertices are pairwise non-adjacent. Let $D=D(U,\gamma)$ be the graph on $V(U)$ in which $u,v$ are adjacent if $u\not\sim v$ and $\gamma(u)\ne\gamma(v)$. 
We speak of \emph{$D$-edges}, \emph{$D$-neighbors} and so on.

\Cref{thm:cg} is the case $U=G[Y]$, with $\gamma(y)$ the unique point of $y$: the classes are the stars, $U^+=G[Y\cup R]$ (after deleting points with empty star) is $P_6$-free, and the stars are stable, so $\gamma$ is proper. It therefore follows from the next theorem.

\begin{theorem}\label{thm:cgfull}
Let $\gamma$ be a proper coloring of $U$ such that $U^+$ is $P_6$-free. Then $\chi(U)\le\omega(U)^2$.
\end{theorem}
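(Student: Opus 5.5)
The plan is induction on $\omega=\omega(U)$, peeling off one vertex at a time. The case $\omega=1$ is trivial. For $\omega\ge2$ I fix a vertex $v$ and partition $V(U)$ into four parts:
\[
V(U)=\{v\}\cup N_U(v)\cup \bigl(V_{\gamma(v)}\setminus\{v\}\bigr)\cup F(v),
\]
where $F(v)$ is the set of $D$-neighbours of $v$. The parts are handled as follows.
\begin{itemize}
\item $N_U(v)$ has clique number at most $\omega-1$. The restriction of $\gamma$ to it is still proper, and the corresponding graph $U^+$ is an induced subgraph of the original $U^+$, so it is $P_6$-free. Induction gives $\chi(N_U(v))\le(\omega-1)^2$.
\item $V_{\gamma(v)}\setminus\{v\}$ is stable, so together with $v$ it costs one colour.
\end{itemize}
Since $\omega^2=(\omega-1)^2+2\omega-1$, it suffices to find $v$ with $\chi(F(v))\le 2\omega-2$.

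To control $F(v)$ I will use the new vertices $z_j$ as in \cref{lem:capkey}. No vertex of $F(v)$ is adjacent to $v$, and none has colour $\gamma(v)$. Hence $z_{\gamma(v)}$ and $v$ are both anticomplete to $F(v)$, and $z_{\gamma(v)}\hbox{-}v$ is an edge hanging outside $F(v)$. Suppose $x\in F(v)$ has a neighbour $u\in N_U(v)$; then $z_{\gamma(v)}\hbox{-}v\hbox{-}u\hbox{-}x$ is an induced path. $P_6$-freeness then forbids induced $P_2$'s in $F(v)$ that leave $x$ and avoid $N(u)$. It also forbids the analogous extensions through $z_{\gamma(x)}$, since $z_{\gamma(x)}$ is adjacent to $x$ but to none of $v,u,z_{\gamma(v)}$.

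Accordingly I split $F(v)=F_1\cup F_2$, where $F_1$ consists of vertices with a neighbour in $N_U(v)$ and $F_2$ of those without. I aim to show $\chi(F_1)\le\omega-1$ and $\chi(F_2)\le\omega-1$ by proving each part is $P_4$-free and has clique number below $\omega$, so that \cref{lem:chvatal} (or \cref{lem:P4}) applies; a partial version, e.g.\ $P_4$-freeness of each component after removing one class, would also suffice.
\begin{itemize}
\item For $F_1$: an induced $P_4$ $a\hbox{-}b\hbox{-}c\hbox{-}d$ in $F_1$, combined with a neighbour $u\in N_U(v)$ of an end and the path $u\hbox{-}v\hbox{-}z_{\gamma(v)}$, gives a long induced path unless $u$ has many neighbours on it. In that case I reroute through $z_{\gamma(a)}$ and $z_{\gamma(d)}$, which are adjacent only to vertices of their own colour class.
\item For $F_2$: the components of $U[F_2]$ are anticomplete to $N[v]$. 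Here I would choose $v$ extremal, for instance with $|N_U(v)|$ maximum or with $F(v)$ minimal under inclusion, so that a clique of size $\omega$ or an induced $P_4$ in $F_2$ contradicts this choice.
\end{itemize}

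The main obstacle is this last step: getting $\chi(F(v))\le2\omega-2$, and in particular handling $F_2$, where $v$ gives no direct leverage. If the extremal choice of $v$ does not work, I would fall back on a component-wise argument. I would take a component $C$ of the $D$-graph and colour $U[C]$ by induction. Across different $D$-components every pair is either $U$-adjacent or has the same colour; the idea is that this forces a join-like structure, so their colourings combine with the colour-class structure at cost $\max$ rather than sum. The bound $\omega^2$ would then come from an $\omega$-by-$\omega$ product of colourings.
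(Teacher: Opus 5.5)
Your reduction is correct as far as it goes. With $\chi(N_U(v))\le(\omega-1)^2$ and one colour for $V_{\gamma(v)}$, everything rests on finding $v$ with $\chi(F(v))\le 2\omega-2$. But that inequality is the entire difficulty, and the route you sketch to it fails because its intermediate claims are false. Non-rainbow induced $P_4$'s are allowed in $U$, so nothing forces $F_1$ or $F_2$ to be $P_4$-free.

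For $F_1$, let $U$ consist of a $5$-cycle $v_1\hbox{-}v_2\hbox{-}v_3\hbox{-}v_4\hbox{-}v_5\hbox{-}v_1$ coloured $1,2,1,2,3$, a vertex $u$ of colour $5$ complete to it, and a vertex $v$ of colour $4$ adjacent only to $u$. Then $U^+$ is $P_6$-free and $\omega(U)=3$. Here $N_U(v)=\{u\}$ and $F_1(v)=\{v_1,\dots,v_5\}$, which contains the induced $P_4$ $v_1\hbox{-}v_2\hbox{-}v_3\hbox{-}v_4$ and has $\chi=3>\omega-1$. Your rerouting cannot help: $u$ is complete to this $P_4$, and $z_1$, $z_2$ each see two of its vertices.

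For $F_2$, take the disjoint union of two $\omega$-cliques with disjoint palettes. There $F_2(v)$ is an $\omega$-clique for \emph{every} $v$, so no extremal choice of $v$ gives $\chi(F_2)\le\omega-1$. Passing to components removes this example, but then you still have no argument for $F_2$ at all.

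Your fallback points toward the right structure, but it skips the decisive step and combines colourings incorrectly. It presupposes that for connected $U$ the graph $D$ is disconnected. That is exactly \cref{thm:cgsplit}, which is the real content of the paper's proof: it uses the laminarity of the sets $\nu(u)$, the partition of \cref{lem:top}, \cref{lem:cgmore,lem:rz}, and a minimal-counterexample argument. Nothing in your proposal would yield it. Nor do colourings of different $D$-components combine at cost $\max$. If $U$ is a clique with all colours distinct, every $D$-component is a single vertex, yet $\chi(U)=|V(U)|$.

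The paper does not combine colourings recursively at all. Iterating \cref{thm:cgsplit} produces a cograph $H$ with $uv\in E(U)$ if and only if $uv\in E(H)$ and $\gamma(u)\ne\gamma(v)$. Take a transitive orientation $<_1$ of $H$ and the order $<_2$ by colour index. Every edge of $U$ is comparable in both orders, so $E(U)$ splits into the edges where the orders agree and those where they disagree. Each part is a comparability graph, hence perfect with clique number at most $\omega(U)$, and the product of the two colourings gives $\omega(U)^2$.
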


The condition is hereditary: for $U'\subseteq V(U)$, $(U[U'])^+$ is an induced subgraph of $U^+$ (keep the $z_j$ with $V_j\cap U'\ne\emptyset$). The main step is the following.

\begin{theorem}\label{thm:cgsplit}
Let $\gamma$ be a proper coloring of $U$ such that $U^+$ is $P_6$-free, with $|V(U)|\ge2$. Then $U$ or $D(U,\gamma)$ is disconnected.
\end{theorem}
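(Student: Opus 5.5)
The plan is to prove \cref{thm:cgsplit} by extracting from a hypothetical counterexample an induced $P_4$ in $U$ whose colors let it extend, through two of the new vertices $z_j$, to an induced $P_6$ in $U^+$.

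\textbf{The extension lemma.} Suppose $a\hbox{-}b\hbox{-}c\hbox{-}d$ is an induced path in $U$ with
\[
\gamma(c)\ne\gamma(a),\qquad \gamma(b)\ne\gamma(d),\qquad \gamma(a)\ne\gamma(d).
\]
Then
\[
z_{\gamma(a)}\hbox{-}a\hbox{-}b\hbox{-}c\hbox{-}d\hbox{-}z_{\gamma(d)}
\]
is an induced $P_6$ in $U^+$. To check this:
\begin{itemize}
\item $z_{\gamma(a)}$ is not adjacent to $b$, because $b\sim a$ and $\gamma$ is proper.
\item $z_{\gamma(a)}$ is not adjacent to $c$ or $d$, by the color hypotheses.
\item The symmetric statements hold for $z_{\gamma(d)}$.
\item The two $z$'s are distinct because $\gamma(a)\ne\gamma(d)$, and they are non-adjacent by construction.
\end{itemize}
Call such a path \emph{colorful}. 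It therefore suffices to show: if $|V(U)|\ge 2$ and both $U$ and $D=D(U,\gamma)$ are connected, then $U$ contains a colorful $P_4$.

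\textbf{Minimal counterexample.} Take a counterexample with $|V(U)|$ minimal. The case $|V(U)|=2$ is immediate: if the two vertices are adjacent, $D$ has no edge; otherwise $U$ has no edge. So $|V(U)|\ge3$.

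Pick $v$ to be a non-cut vertex of $U$, for instance a leaf of a spanning tree, so that $U-v$ is connected. The hypothesis is hereditary, and $D(U-v,\gamma)=D-v$. By minimality, $D-v$ is therefore disconnected. Let $C_1,\dots,C_m$ ($m\ge2$) be its components.

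\begin{itemize}
\item Two vertices in different $C_i$ are non-$D$-adjacent, so they are either $U$-adjacent or of the same color.
\item Since $D$ is connected, $v$ has a $D$-neighbor $x_i\in C_i$ for every $i$. Thus $v\not\sim x_i$ and $\gamma(x_i)\ne\gamma(v)$.
\item Since $U$ is connected, $v$ has a $U$-neighbor $u$, say $u\in C_1$ after renaming.
\end{itemize}

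\textbf{Building the colorful $P_4$.} Inside $C_1$, walk along a $D$-path from $u$ to $x_1$. This gives consecutive vertices $p,q\in C_1$ that are $D$-adjacent, with $p\sim v$ and $q\not\sim v$. Now take a vertex $x\in C_2$, and use the dichotomy ``$U$-adjacent or same color'' between $C_1$ and $C_2$ to decide how $x$ attaches to $p$ and $q$. The candidate path is
\[
v\hbox{-}p\hbox{-}x\hbox{-}q \quad\text{or a variant of it}
\]
whose endpoints get distinct colors. When a same-color coincidence blocks the needed edge, use it instead to force the missing $U$-adjacencies elsewhere. For example, two vertices of the same color are non-adjacent, so they are forced to be $U$-adjacent to the third vertex.

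\textbf{Expected obstacle.} The main difficulty is this last case analysis: the same-color exceptions are exactly what distinguishes $D$ from the complement of $U$. I would handle it by choosing $p,q$ and $x$ extremally, for example $x=x_2$, the $D$-neighbor of $v$, and repeatedly replacing a vertex by another of a different color whenever a color clash appears. If a direct choice fails, the fallback is to strengthen the induction hypothesis: prove that in any minimal counterexample, every vertex $v$ and every $D$-component of $D-v$ must be arranged so that some colorful $P_4$ passes through $v$. This is the analogue of the standard inductive proof of Seinsche's theorem (\cref{lem:P4}), with ``complement'' replaced by $D$.
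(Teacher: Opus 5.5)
Your extension lemma is correct (a colorful $P_4$ is exactly a rainbow induced $P_4$, as in \cref{lem:cgfacts}(a)), and so is the set-up with a non-cut vertex $v$ and the components of $D-v$. The gap is the reduction itself. The statement ``if $U$ and $D$ are connected then $U$ contains a colorful $P_4$'' is false. Moreover, no argument that uses only the absence of colorful $P_4$'s and heredity can prove the theorem.

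Take $U$ to be the path $v_1\hbox{-}v_2\hbox{-}\cdots\hbox{-}v_7$ with $\gamma$ the parity $2$-coloring. With two colors there is no colorful $P_4$, and $U$ is connected. The $D$-edges are $v_1v_4,v_1v_6,v_3v_6,v_2v_5,v_2v_7,v_4v_7$, so $D$ is the path $v_3\hbox{-}v_6\hbox{-}v_1\hbox{-}v_4\hbox{-}v_7\hbox{-}v_2\hbox{-}v_5$, which is also connected. Every connected proper induced subgraph is a shorter path with the parity coloring, and its $D$ is disconnected. So this is even a \emph{minimal} example in the hereditary class of properly colored graphs without rainbow induced $P_4$.

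Run your construction on it: $v=v_7$, $C_1=\{v_1,v_3,v_4,v_6\}$, $C_2=\{v_2,v_5\}$, $p=v_6$, $q=v_1$, $x=v_2$. You get stuck at the clash $\gamma(v_6)=\gamma(v_2)$. No extremal re-choice, and no strengthened induction asking for a colorful $P_4$ through $v$, can rescue this, because there is no colorful $P_4$ at all. The example does not contradict the theorem, since $U$ itself contains a $P_6$. It does show that $P_6$'s of the form $z_{\gamma(a)}\hbox{-}a\hbox{-}b\hbox{-}c\hbox{-}d\hbox{-}z_{\gamma(d)}$ are not enough.

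The missing idea is to use induced $P_6$'s of $U^+$ in which some $z_j$ is an \emph{interior} vertex, linking two non-adjacent vertices of one color class. These are what control how the rest of $U$ sees a color class. The paper uses the rainbow-$P_4$ argument only when every color class is a singleton. Then $D$ is the complement of $U$, $U$ is $P_4$-free, and \cref{lem:P4} applies.

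Otherwise the paper fixes $V_j$ with $|V_j|\ge 2$ and proceeds as follows:
\begin{itemize}
\item Using paths such as $x\hbox{-}z_j\hbox{-}y\hbox{-}p\hbox{-}q\hbox{-}z_{\gamma(q)}$ and $z_{\gamma(u)}\hbox{-}u\hbox{-}x\hbox{-}z_j\hbox{-}y\hbox{-}u'$, it shows that the sets $V_j\setminus N(u)$ form a laminar family (\cref{lem:cgfacts}).
\item When no vertex is complete to $V_j$, the maximal members of this family split $V(U)$ into parts with no $D$-edges between them (\cref{lem:top}); the same family handles the case where some vertex is complete to $V_j$ but none is anticomplete to it (\cref{lem:rz}).
\item The remaining case uses minimality on $C_j\cup R_j\cup V_j$, together with \cref{lem:cgmore} and a twin-deletion argument.
\end{itemize}
Your plan needs a substitute for this analysis of a non-trivial color class.
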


We use the argument of Gy\'arf\'as~\cite[Proposition~5.8]{gyarfas1987} to prove Theorem \ref{thm:cgfull} assuming Theorem \ref{thm:cgsplit}.

\begin{proof}[Proof of \cref{thm:cgfull} from \cref{thm:cgsplit}]
We claim that there is a graph $H$ on $V(U)$, built from single vertices by disjoint unions and joins such that 
for every two vertices $u,v\in V(H)$, 
\[v\sim u \text{ in } U \Leftrightarrow \gamma(v)\neq \gamma(u) \text{ and } v\sim u \text{ in } H.\]
This is clear if $|V(U)|=1$. 
By Theorem \ref{thm:cgsplit}, $U$ or $D$ is disconnected.
If $U$ is disconnected, take the disjoint union of the graphs for the components. If $D$ is disconnected with components $W_1,\dots,W_s$, then any two vertices in different $W_i$ of different colors are adjacent in $U$; take the join of the graphs for the $W_i$. The claim follows by induction.

So $H$ is $P_4$-free and so it is the comparability graph of the strict partial order $<_1$. Let $u<_2v$ if $\gamma(u)<\gamma(v)$. Every edge of $U$ is comparable in both orders. Let $E^+$ be the set of edges of $U$ on which the two orders agree and $E^-$ the set on which they disagree. Then $E^+$ and $E^-$ are the comparability graphs and so are perfect.  So both have proper colorings with $\omega(U)$ colors. So the product coloring is proper on $U$.
\end{proof}

From now on, $\gamma$ is a proper coloring of $U$ with $U^+$ being $P_6$-free. Fix a color $j$. For $u\notin V_j$ let $\nu(u)=V_j\setminus N(u)$, and for $y\in V_j$ let $\nu(y)=\{y\}$. Let
\[
C_j=\{u\notin V_j:\nu(u)=\emptyset\},\qquad R_j=\{u\notin V_j:\nu(u)=V_j\},\qquad J_j=V(U)\setminus(C_j\cup R_j).
\]
A \emph{rainbow} path is one whose vertices have pairwise different colors.

\begin{lemma}\label{lem:cgfacts}
The following holds.
\begin{enumerate}[label=\upshape(\alph*)]
\item There is no rainbow induced $P_4$.
\item Let $y\in V_j$, $p\sim y$ and $q\notin V_j\cup N(y)$ with $q\sim p$ or $\gamma(q)=\gamma(p)$. Then $V_j\subseteq N(p)\cup N(q)$.
\item Let $u,u'\notin V_j\cup C_j$ with $\nu(u)\cap\nu(u')=\emptyset$. Then $u\sim u'$ if and only if $\gamma(u)\ne\gamma(u')$.
\item The sets $\nu(u)$, $u\in V(U)$, form a laminar family.
\item If $\emptyset\ne\nu(u)\subsetneq\nu(u')$, then $u\not\sim u'$ and $\gamma(u)\ne\gamma(u')$.
\end{enumerate}
\end{lemma}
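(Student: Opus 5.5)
All five parts come from exhibiting an induced $P_6$ in $U^+$. The extra vertices $z_j$ make this easy: $z_j$ is adjacent exactly to $V_j$, the $z$'s are pairwise non-adjacent, and vertices in the same class $V_j$ are non-adjacent because $\gamma$ is proper. Part (b) is the key local tool, and (d) and (e) will mostly reduce to it. The only real care needed is checking that each six-vertex path is induced, and in (d) splitting the non-adjacent case by whether the two colours agree.

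For (a), suppose $a\hbox{-}b\hbox{-}c\hbox{-}d$ is a rainbow induced $P_4$. Consider $z_{\gamma(a)}\hbox{-}a\hbox{-}b\hbox{-}c\hbox{-}d\hbox{-}z_{\gamma(d)}$. Since all colours differ, $z_{\gamma(a)}$ sees only $a$ on the path and $z_{\gamma(d)}$ sees only $d$. So this is an induced $P_6$, a contradiction.

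For (b), suppose $x\in V_j$ is non-adjacent to both $p$ and $q$. Since $p\sim y$ and $\gamma$ is proper, $p\notin V_j$, so $z_j$ sees neither $p$ nor $q$, and $x\ne y$.
\begin{itemize}
\item If $q\sim p$, then $\gamma(q)\neq\gamma(p)$ and $\gamma(q)\ne j$. So $x\hbox{-}z_j\hbox{-}y\hbox{-}p\hbox{-}q\hbox{-}z_{\gamma(q)}$ is an induced $P_6$.
\item If $\gamma(q)=\gamma(p)$, then $q\not\sim p$. So $x\hbox{-}z_j\hbox{-}y\hbox{-}p\hbox{-}z_{\gamma(p)}\hbox{-}q$ is an induced $P_6$.
\end{itemize}

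For (c), only one direction needs proof, since adjacency forces different colours. Assume $\gamma(u)\neq\gamma(u')$ but $u\not\sim u'$. Pick $x\in\nu(u)$ and $x'\in\nu(u')$; these exist because $u,u'\notin C_j$, and $x\neq x'$ by disjointness. Then $x\sim u'$ and $x'\sim u$, while $x\not\sim u$ and $x'\not\sim u'$. Hence $z_{\gamma(u)}\hbox{-}u\hbox{-}x'\hbox{-}z_j\hbox{-}x\hbox{-}u'$ is an induced $P_6$. Here we use that $z_{\gamma(u)}$ misses $u'$ (different colour) and misses $x,x'$ (since $\gamma(u)\ne j$).

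For (d), sets $\nu(y)=\{y\}$ with $y\in V_j$ cause no crossing. So take $u,u'\notin V_j$ with crossing $\nu(u),\nu(u')$, and pick $a\in\nu(u)\setminus\nu(u')$, $b\in\nu(u')\setminus\nu(u)$, $c\in\nu(u)\cap\nu(u')$. Then $a\sim u'$ and $b\sim u$.
\begin{itemize}
\item If $u\sim u'$ or $\gamma(u)=\gamma(u')$, apply (b) with $y=a$, $p=u'$, $q=u$. It gives $V_j\subseteq N(u)\cup N(u')$, contradicting the choice of $c$.
\item Otherwise $u\not\sim u'$ and the colours differ. Then $z_{\gamma(u)}\hbox{-}u\hbox{-}b\hbox{-}z_j\hbox{-}a\hbox{-}u'$ is an induced $P_6$.
\end{itemize}

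For (e), first note that $u'\notin V_j$, since $|\nu(u')|\ge2$.
\begin{itemize}
\item If $u\in V_j$, then $\nu(u)=\{u\}\subseteq\nu(u')$. So $u\not\sim u'$, and the colours differ because $u'\notin V_j$.
\item If $u\notin V_j$, pick $a\in\nu(u')\setminus\nu(u)$, so $a\sim u$ and $a\not\sim u'$. If $u\sim u'$ or $\gamma(u)=\gamma(u')$, then (b) with $y=a$, $p=u$, $q=u'$ yields $V_j\subseteq N(u)\cup N(u')$. This contradicts any $c\in\nu(u)\subseteq\nu(u')$.
\end{itemize}
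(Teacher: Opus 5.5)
Your proposal is correct and matches the paper's proof essentially step for step. It uses the same induced $P_6$'s through the $z$-vertices for (a)–(c), the same reduction of (d) and (e) to (b), and the same path $z_{\gamma(u)}\hbox{-}u\hbox{-}b\hbox{-}z_j\hbox{-}a\hbox{-}u'$ in the remaining case of (d). The only difference is that in (d) you apply (b) with the roles of $u,u'$ swapped (the paper takes $p=u$, $y=b$, $q=u'$), which is immaterial by symmetry.
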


\begin{proof}
(a) If $a\hbox{-}b\hbox{-}c\hbox{-}d$ is such a path, then $z_{\gamma(a)}\hbox{-}a\hbox{-}b\hbox{-}c\hbox{-}d\hbox{-}z_{\gamma(d)}$ is an induced $P_6$.

(b) Let $x\in V_j$ be non-adjacent to $p$ and $q$. Then $x\ne y$. If $q\sim p$, then $x\hbox{-}z_j\hbox{-}y\hbox{-}p\hbox{-}q\hbox{-}z_{\gamma(q)}$ is an induced $P_6$. If $\gamma(q)=\gamma(p)$, then $x\hbox{-}z_j\hbox{-}y\hbox{-}p\hbox{-}z_{\gamma(p)}\hbox{-}q$ is. (Here $\gamma(p)\ne j$ as $p\sim y$ and $\gamma(q)\ne j$.)

(c) If $\gamma(u)\ne\gamma(u')$ and $u\not\sim u'$, choose $x\in\nu(u')$ and $y\in\nu(u)$. Then $u\sim x$ and $u'\sim y$ and so $z_{\gamma(u)}\hbox{-}u\hbox{-}x\hbox{-}z_j\hbox{-}y\hbox{-}u'$ is an induced $P_6$. The converse holds as $\gamma$ is proper.

(d) Suppose $a\in\nu(u)\setminus\nu(u')$, $b\in\nu(u')\setminus\nu(u)$ and $x\in\nu(u)\cap\nu(u')$. Then $u,u'\notin V_j$. If $u\sim u'$ or $\gamma(u)=\gamma(u')$, then (b) with $p=u$, $y=b$, $q=u'$ gives $V_j\subseteq N(u)\cup N(u')$, contrary to the existence of $x$. Otherwise $z_{\gamma(u)}\hbox{-}u\hbox{-}b\hbox{-}z_j\hbox{-}a\hbox{-}u'$ is an induced $P_6$.

(e) Since $|\nu(u')|\ge 2$, $u'\notin V_j$.
If $u\in V_j$, this is clear. So $u\notin V_j$.
Choose $a\in\nu(u')\setminus\nu(u)$. If $u\sim u'$ or $\gamma(u)=\gamma(u')$, then (b) with $p=u$, $y=a$, $q=u'$ gives $\nu(u)\cap\nu(u')=\emptyset$. So $\nu(u)=\emptyset$, a contradiction.
\end{proof}

\begin{lemma}\label{lem:top}
Suppose $C_j=\emptyset$ and $|V(U)|\ge2$. Then $U$ or $D$ is disconnected. Moreover, if $|V_j|\ge 2$, then there exists a partition of $V(U)=R_j\cup B_1\cup \cdots \cup B_r$ with $r\ge 2$ such that 
\begin{itemize}
    \item $R_j$ is anticomplete to every $B_s$, and 
    \item any two vertices in different sets $B_s,B_{s'}$ are adjacent if and only if they have different colors, and
    \item every vertex $u$ of $B_s$ has $\nu(u)\subseteq M_s$ for a partition $V_j=M_1\cup\dots\cup M_r$.
\end{itemize}
\end{lemma}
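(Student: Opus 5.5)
The plan is to split on $|V_j|$ and, when $|V_j|\ge 2$, build the partition from the maximal proper members of the laminar family $\{\nu(u)\}$ of \cref{lem:cgfacts}(d). Throughout, $C_j=\emptyset$ means every $u\notin V_j$ has a non-neighbor in $V_j$. Note also that $V_j\neq\emptyset$: otherwise every $u$ would have $\nu(u)=\emptyset$ and lie in $C_j$, which is impossible since $|V(U)|\ge 2$.

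\emph{Case $|V_j|=1$.} Say $V_j=\{y\}$. Every $u\notin V_j$ is not in $C_j$, so it must be non-adjacent to $y$. Hence $y$ is isolated in $U$, and since $|V(U)|\ge2$, $U$ is disconnected.

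\emph{Case $|V_j|\ge 2$.} Each vertex $u\notin R_j$ has $\nu(u)$ a nonempty proper subset of $V_j$: this holds for $u\in V_j$ because $\nu(u)=\{u\}$, and for $u\in J_j$ by definition. By \cref{lem:cgfacts}(d) these sets form a laminar family, and the singletons $\{y\}$ with $y\in V_j$ cover $V_j$. Let $M_1,\dots,M_r$ be the inclusion-maximal members of this family. By laminarity they are pairwise disjoint, and they cover $V_j$, so they partition $V_j$. Each $M_s$ is a proper subset of $V_j$, so $r\ge2$. Put each $u\notin R_j$ into $B_s$, where $s$ is the unique index with $\nu(u)\subseteq M_s$. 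We then verify the three bullets.

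\begin{itemize}
\item \emph{$R_j$ is anticomplete to every $B_s$.} A vertex $x\in R_j$ is anticomplete to $V_j$ by definition. For $u\in J_j$ we have $\emptyset\ne\nu(u)\subsetneq V_j=\nu(x)$, so \cref{lem:cgfacts}(e) gives $u\not\sim x$.
\item \emph{Vertices in different parts.} Take $u\in B_s$ and $u'\in B_{s'}$ with $s\ne s'$; then $\nu(u)\cap\nu(u')=\emptyset$.
  \begin{itemize}
  \item If $u,u'\notin V_j$, they lie outside $C_j$, and \cref{lem:cgfacts}(c) gives exactly the required equivalence.
  \item If $u=y\in V_j$ and $u'\notin V_j$, then $y\notin\nu(u')$, so $u'\sim y$, and the colors indeed differ.
  \item If both lie in $V_j$, they have the same color and are non-adjacent since $\gamma$ is proper.
  \end{itemize}
\item \emph{The third bullet} holds by construction.
\end{itemize}

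Finally, the disconnectedness in this case. If $R_j\ne\emptyset$, then $R_j$ is anticomplete to the nonempty set $\bigcup_s B_s\supseteq V_j$, so $U$ is disconnected. If $R_j=\emptyset$, the second bullet says that two vertices in different parts $B_s,B_{s'}$ are never non-adjacent with different colors, so there are no $D$-edges between different parts. Since $r\ge2$, $D$ is disconnected.

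The only delicate step is choosing the $M_s$ correctly, so that they genuinely partition $V_j$ and $r\ge2$. This is where the singletons $\nu(y)=\{y\}$ for $y\in V_j$ and the laminarity in \cref{lem:cgfacts}(d) are used; the rest is direct bookkeeping with \cref{lem:cgfacts}(c),(e).
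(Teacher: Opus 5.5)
Your proof is correct and follows essentially the same route as the paper. The maximal members other than $V_j$ of the laminar family $\{\nu(u)\}$ give the partition $M_1,\dots,M_r$ of $V_j$, and \cref{lem:cgfacts}(e) and (c) then give the anticompleteness of $R_j$ and the adjacency between different parts; $U$ or $D$ is disconnected according to whether $R_j$ is empty. The one step you leave implicit is that each $B_s\supseteq M_s$ is nonempty, which is needed to conclude that $D$ is disconnected when there are no $D$-edges between parts.
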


\begin{proof}
If $V_j=\{y\}$, every other vertex has $\nu=\{y\}$ and so $y$ is isolated in $U$. Let $|V_j|\ge2$. By \cref{lem:cgfacts} (d), the family $\{\nu(u)\}$ is laminar. Its members other than $V_j$ include all singletons of $V_j$ and so its maximal members $M_1,\dots,M_r$ other than $V_j$ partition $V_j$, where $r\ge2$. 
Let $B_s=\{u:\nu(u)\subseteq M_s\}$. As $C_j=\emptyset$, every vertex lies in $R_j$ or in some $B_s$. By \cref{lem:cgfacts} (e), $R_j$ is anticomplete to every $B_s$. For $u\in B_s$, $u'\in B_{s'}$ with $s\ne s'$, \cref{lem:cgfacts} (c) gives the stated adjacency if $u,u'\notin V_j$, and it is clear otherwise. If $R_j\ne\emptyset$, then $U$ is disconnected. Otherwise there is no $D$-edge between $B_1$ and $B_2\cup\dots\cup B_r$.
\end{proof}

Applied to $U-C_j$ (in which no vertex is complete to $V_j$), \cref{lem:top} shows: $R_j$ is anticomplete to $J_j$, and by \cref{lem:cgfacts}(e) no color occurs both in $R_j$ and in $J_j\setminus V_j$.

\begin{lemma}\label{lem:cgmore}
Let $w,w_1,w_2\in C_j$ and $r\in R_j$.
\begin{enumerate}[label=\upshape(\alph*)]
\item If $w_1w_2$ and $xw_1$ are $D$-edges with $x\notin C_j\cup V_j$, then $xw_2$ is a $D$-edge.
\item If $w\sim r$, then $w$ has no $D$-neighbor in $J_j\setminus V_j$.
\item If $\gamma(w)=\gamma(r)$, then $w$ has no $D$-neighbor in $J_j\setminus V_j$.
\end{enumerate}
\end{lemma}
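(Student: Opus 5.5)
The plan is to prove each item by producing either a rainbow induced $P_4$, which \cref{lem:cgfacts}(a) forbids, or an induced $P_6$ in $U^+$ that uses the auxiliary vertices $z_c$. Recall the relevant facts.
\begin{itemize}
\item A vertex of $C_j$ lies outside $V_j$ and is complete to $V_j$, so its color differs from $j$.
\item A vertex of $R_j$ is anticomplete to $V_j$.
\item A vertex $u\in J_j\setminus V_j$ has a neighbor $y\in V_j$ and a non-neighbor in $V_j$.
\item By the remark after \cref{lem:top}, $R_j$ is anticomplete to $J_j$, and no color occurs both in $R_j$ and in $J_j\setminus V_j$.
\item Each $z_c$ is adjacent exactly to $V_c$, and the $z_c$ are pairwise non-adjacent.
\end{itemize}

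For (a), we have $x\not\sim w_1$, $\gamma(x)\ne\gamma(w_1)$, $w_1\not\sim w_2$ and $\gamma(w_1)\ne\gamma(w_2)$. Since $x\notin C_j\cup V_j$, pick $y\in V_j$ with $x\not\sim y$; then $y\sim w_1$ and $y\sim w_2$. Suppose $xw_2$ is not a $D$-edge; there are two cases.
\begin{itemize}
\item If $x\sim w_2$, then $x\hbox{-}w_2\hbox{-}y\hbox{-}w_1$ is an induced $P_4$. Its colors $\gamma(x),\gamma(w_2),j,\gamma(w_1)$ are pairwise distinct: $x\sim w_2$, $\gamma(x)\ne\gamma(w_1)$, and none of $x,w_1,w_2$ has color $j$. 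This rainbow $P_4$ contradicts \cref{lem:cgfacts}(a).
\item If $x\not\sim w_2$ and $\gamma(x)=\gamma(w_2)$, then $x\hbox{-}z_{\gamma(x)}\hbox{-}w_2\hbox{-}y\hbox{-}w_1\hbox{-}z_{\gamma(w_1)}$ is an induced $P_6$. Checking non-edges only requires that $y$ and $w_1$ are not colored $\gamma(x)$ and that $x,w_2,y$ are not colored $\gamma(w_1)$.
\end{itemize}

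For (b) and (c), suppose $u\in J_j\setminus V_j$ is a $D$-neighbor of $w$, so $u\not\sim w$ and $\gamma(u)\ne\gamma(w)$. Pick $y\in V_j$ with $y\sim u$; then also $y\sim w$. Moreover $r\not\sim y$ and $r\not\sim u$, and $\gamma(u)\ne\gamma(r)$ by the color separation between $R_j$ and $J_j\setminus V_j$.
\begin{itemize}
\item In (b), $r\hbox{-}w\hbox{-}y\hbox{-}u$ is an induced $P_4$ with colors $\gamma(r),\gamma(w),j,\gamma(u)$. These are pairwise distinct: $r\sim w$ gives $\gamma(r)\ne\gamma(w)$, $\gamma(u)\ne\gamma(r)$ as above, $\gamma(u)\ne\gamma(w)$ by assumption, and none of $r,w,u$ has color $j$. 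This contradicts \cref{lem:cgfacts}(a).
\item In (c), $\gamma(r)=\gamma(w)$ forces $r\not\sim w$, and $r\hbox{-}z_{\gamma(r)}\hbox{-}w\hbox{-}y\hbox{-}u\hbox{-}z_{\gamma(u)}$ is an induced $P_6$. For the non-edges: $z_{\gamma(r)}$ misses $y$ and $u$ since $\gamma(u)\ne\gamma(w)$, and $z_{\gamma(u)}$ misses $r$, $w$ and $y$ since $\gamma(u)\notin\{\gamma(r),j\}$.
\end{itemize}

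The only delicate step is the bookkeeping of colors. In particular, (b) and (c) both depend on the fact, stated after \cref{lem:top} and derived from \cref{lem:cgfacts}(e), that no color occurs both in $R_j$ and in $J_j\setminus V_j$. I expect no real obstacle beyond verifying each adjacency and non-adjacency in the displayed paths.
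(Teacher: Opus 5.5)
Your proof is correct and follows the paper's argument essentially verbatim. You use the same rainbow induced $P_4$'s ($x\hbox{-}w_2\hbox{-}y\hbox{-}w_1$ in (a) and $r\hbox{-}w\hbox{-}y\hbox{-}u$ in (b)) and the same two induced $P_6$'s through the auxiliary vertices $z_c$ in (a) and (c); yours are the paper's paths read in reverse. One small slip in (c): $z_{\gamma(r)}\not\sim y$ holds because $\gamma(w)\ne j$ (as $w\in C_j$), not because $\gamma(u)\ne\gamma(w)$.
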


\begin{proof}
(a) Choose $y\in\nu(x)$. If $x\sim w_2$, then $w_1\hbox{-}y\hbox{-}w_2\hbox{-}x$ is a rainbow induced $P_4$, contradicting Theorem \ref{lem:cgfacts} (a). If $\gamma(x)=\gamma(w_2)=m$, then $z_{\gamma(w_1)}\hbox{-}w_1\hbox{-}y\hbox{-}w_2\hbox{-}z_m\hbox{-}x$ is an induced $P_6$.

(b) Let $x$ be such a $D$-neighbor and $y\in N(x)\cap V_j$. Then $r\hbox{-}w\hbox{-}y\hbox{-}x$ is a rainbow induced $P_4$ ($\gamma(r)\ne\gamma(x)$ since $R_j$ and $J_j\setminus V_j$ have no common color).

(c) Let $x$ be such a $D$-neighbor and $y\in N(x)\cap V_j$. Then $z_{\gamma(x)}\hbox{-}x\hbox{-}y\hbox{-}w\hbox{-}z_{\gamma(w)}\hbox{-}r$ is an induced $P_6$.
\end{proof}

\begin{lemma}\label{lem:rz}
Suppose $R_j=\emptyset$ and $C_j\ne\emptyset$, and $|V(U)|\ge2$. Then $D$ is disconnected.
\end{lemma}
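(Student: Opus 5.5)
The plan is to reduce to the block structure of \cref{lem:top} applied to $U-C_j$, and then show that the vertices of $C_j$ cannot create $D$-edges between different blocks. Throughout, $R_j=\emptyset$, so every vertex lies in $C_j$, in $V_j$, or in $J_j\setminus V_j$. Every $w\in C_j$ is complete to $V_j$ and has $\gamma(w)\ne j$, so \emph{there are no $D$-edges between $C_j$ and $V_j$}.

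\textbf{Degenerate cases.} If $V_j=\emptyset$ the color is not used and there is nothing to do, so assume $V_j\neq\emptyset$. If $J_j\setminus V_j=\emptyset$, then all $D$-edges leaving $C_j$ would have to go to $V_j$, which is impossible. In that case $C_j$ and $V_j$ are both nonempty and no $D$-edge joins them, so $D$ is disconnected. If $|V_j|=1$, say $V_j=\{y\}$, then every $u\notin V_j$ has $\nu(u)\in\{\emptyset,\{y\}\}$, so $J_j\setminus V_j=\emptyset$ and we are in the previous case. So assume $|V_j|\ge 2$ and $J_j\setminus V_j\ne\emptyset$.

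\textbf{Block structure.} Apply \cref{lem:top} to $U-C_j$. There $C_j$ is empty, and $R_j$ stays empty. This gives a partition $V(U)\setminus C_j=B_1\cup\dots\cup B_r$ with $r\ge2$ and a partition $V_j=M_1\cup\dots\cup M_r$ such that:
\begin{itemize}
\item $\nu(u)\subseteq M_s$ for every $u\in B_s$;
\item vertices in different blocks are adjacent if and only if they have different colors.
\end{itemize}
In particular there are no $D$-edges between different blocks.

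\textbf{Key claim: no $w\in C_j$ has $D$-neighbors in two different blocks.} Suppose $x\in B_s$ and $x'\in B_{s'}$ with $s\ne s'$ are both $D$-neighbors of $w$; by the observation above, $x,x'\notin V_j$. Pick $y\in\nu(x')\subseteq M_{s'}$ and $y'\in\nu(x)\subseteq M_s$. Since $y\notin\nu(x)$ we have $y\sim x$, and similarly $y'\sim x'$. We also have $y\not\sim y'$, $w\sim y$ and $w\sim y'$.
\begin{itemize}
\item If $x\sim x'$, then $z_{\gamma(w)}\hbox{-}w\hbox{-}y'\hbox{-}x'\hbox{-}x\hbox{-}z_{\gamma(x)}$ is an induced $P_6$. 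This uses $\gamma(w)\ne\gamma(x),\gamma(x')$ (both are $D$-edges), $\gamma(x)\ne\gamma(x')$, $w\not\sim x,x'$ and $y'\not\sim x'$.

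Correction to check: in this path I need $y'\sim x'$ but $y'\not\sim x$. Since $y'\in\nu(x)$ we have $y'\not\sim x$, so I should use $y\in\nu(x')$ in the role of the vertex adjacent to $x$ only. The path to verify is therefore $z_{\gamma(w)}\hbox{-}w\hbox{-}y\hbox{-}x\hbox{-}x'\hbox{-}z_{\gamma(x')}$, where $y\sim x$ and $y\not\sim x'$. All required non-adjacencies among these six vertices follow from the facts listed.
\item If $x\not\sim x'$, then $\gamma(x)=\gamma(x')=m$. Then $x\hbox{-}z_m\hbox{-}x'\hbox{-}y'\hbox{-}w\hbox{-}\ldots$ is the starting point. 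Since $y'\sim x'$, $y'\not\sim x$ and $w\not\sim x,x'$, one can extend this path at $w$ by $z_{\gamma(w)}$, or shortcut through $y$, to obtain an induced $P_6$.
\end{itemize}
Either way $U^+$ contains an induced $P_6$, a contradiction.

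\textbf{Assembling the components.} Assign each $w\in C_j$ to the unique block containing its $D$-neighbors in $J_j\setminus V_j$, if it has any. Take a component $K$ of $D[C_j]$. By \cref{lem:cgmore}(a), if $x\notin C_j\cup V_j$ is $D$-adjacent to one vertex of $K$, it is $D$-adjacent to all of $K$. Hence all vertices of $K$ that have $D$-neighbors outside $C_j$ are assigned to the same block. Let $B_s^*$ be $B_s$ together with all components $K$ assigned to $B_s$; components with no $D$-neighbors outside $C_j$ are already separate $D$-components. Then no $D$-edge joins $B_s^*$ to $B_{s'}^*$ for $s\ne s'$, and since $r\ge2$, $D$ is disconnected.

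\textbf{Main obstacle.} The delicate step is the case analysis in the key claim, in particular the same-color case $x\not\sim x'$. I expect to need a careful choice among $y\in\nu(x')$ and $y'\in\nu(x)$, possibly combined with \cref{lem:cgfacts}(b), to certify an induced $P_6$ rather than a $C_5$ or some other non-path configuration.
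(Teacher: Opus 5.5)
Your proposal is correct and is essentially the paper's proof: you apply \cref{lem:top} to $U-C_j$, show that no vertex of $C_j$ has $D$-neighbors in two different blocks (using the same rainbow-$P_4$ and $P_6$ configurations as the paper), and then use \cref{lem:cgmore}(a) to attach each component of $D[C_j]$ to at most one block. The hedging in your key claim is unnecessary: your first path $z_{\gamma(w)}\hbox{-}w\hbox{-}y'\hbox{-}x'\hbox{-}x\hbox{-}z_{\gamma(x)}$ was already induced, and in the same-color case $x\hbox{-}z_m\hbox{-}x'\hbox{-}y'\hbox{-}w\hbox{-}z_{\gamma(w)}$ is an induced $P_6$ as it stands (it is the paper's path), with the remaining non-adjacencies following from $m\neq j$ and $\gamma(w)\notin\{j,m\}$.
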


\begin{proof}
If $V_j=\{y\}$, then every vertex outside $V_j$ is adjacent to $y$ and so $y$ is isolated in $D$. Let $|V_j|\ge2$ and apply \cref{lem:top} to $U-C_j$: $V(U)\setminus C_j=B_1\cup\dots\cup B_r$ with $r\ge2$. Let $K$ be the vertex set of a component of $D[C_j]$. Vertices of $C_j$ have no $D$-neighbor in $V_j$. By \cref{lem:cgmore}~(a), all vertices of $K$ have the same set $N_K$ of $D$-neighbors outside $C_j$. We claim that $N_K$ meets at most one $B_s$. Suppose $x_1\in N_K\cap B_1$ and $x_2\in N_K\cap B_2$ are $D$-adjacent to $w\in K$. Choose $y_1\in\nu(x_1)\subseteq M_1$ and $y_2\in\nu(x_2)\subseteq M_2$. Then $x_1\sim y_2$ and $x_2\sim y_1$. If $\gamma(x_1)\ne\gamma(x_2)$, then $x_1\sim x_2$ by Lemma \ref{lem:top} and $x_2\hbox{-}x_1\hbox{-}y_2\hbox{-}w$ is a rainbow induced $P_4$. If $\gamma(x_1)=\gamma(x_2)=\ell$, then $z_{\gamma(w)}\hbox{-}w\hbox{-}y_2\hbox{-}x_1\hbox{-}z_\ell\hbox{-}x_2$ is an induced $P_6$. This proves the claim. Now put each $K$ into the part $B_s$ that $N_K$ meets (or into a part of its own if $N_K=\emptyset$). There is no $D$-edge between different parts, and there are at least two parts.
\end{proof}

\begin{proof}[Proof of \cref{thm:cgsplit}]
Suppose $(U,\gamma)$ is a counterexample with $|V(U)|$ minimum; so $U$ and $D$ are connected, and by minimality every induced subgraph on at least two vertices has $U$ or $D$ disconnected. If every color class is a single vertex, then by \cref{lem:cgfacts} (a) $U$ is $P_4$-free, and $D$ is its complement. This contradicts Lemma \ref{lem:P4}. So choose $j$ with $|V_j|\ge2$. By \cref{lem:top}, $C_j\ne\emptyset$. By \cref{lem:rz}, $R_j\ne\emptyset$.

Suppose $J_j\ne V_j$. Let $U'=C_j\cup R_j\cup V_j$, a proper subset of $V(U)$. The graph $U[U']$ is connected: $C_j$ is complete to $V_j$, and every component of $U[R_j]$ has a neighbor outside $R_j$ (as $U$ is connected), which lies in $C_j$ because $R_j$ is anticomplete to $J_j$. So $D(U[U'])$ is disconnected. 
Every vertex of $R_j$ is $D$-adjacent to every vertex of $V_j$ and so $R_j\cup V_j$ lies in one component of $D(U[U'])$, and some other component $K$ lies inside $C_j$.
Also,
$K$ has no $D$-neighbor in $R_j\cup V_j$ and is a component of $D[C_j]$. If some $w\in K$ is adjacent to a vertex of $R_j$, then by \cref{lem:cgmore} (b) $w$ has no $D$-neighbor in $J_j\setminus V_j$, and by \cref{lem:cgmore} (a) and connectivity of $K$, any vertex in $K$ has no $D$-neighbor in $J_j\setminus V_j$. So $K$ is a component of $D$, a contradiction. 
Thus, we may assume that $K$ is anticomplete to $R_j$. 
So every $w\in K$ has the color of every vertex of $R_j$. 
Since $D$-edges join different colors and $K$ is a component of $D[C_j]$, $K=\{w\}$. Then by \cref{lem:cgmore} (c), $w$ is isolated in $D$, a contradiction.

So $J_j=V_j$ and every vertex outside $V_j$ is complete or anticomplete to $V_j$. Then two vertices $y,y'\in V_j$ have the same neighbors in $U$ and in $D$, and $U-y'$ is a smaller counterexample, a contradiction.
\end{proof}

\subsection{Proof of \texorpdfstring{\cref{thm:stablestars}}{Theorem 4.1}}

We say that $R$ is \emph{minimal} if no proper subset $R'\subsetneq R$ makes $(Y,R')$ a dominated pair. 
If $R$ is minimal, then every point $r$ has a \emph{private} vertex $p\in Y$, that is, one with $S(p)=\{r\}$. Let $Y_{\ge2}$ be the set of vertices with at least two points. For every $u\in Y_{\ge2}$ we fix two points $r_u^1,r_u^2\in S(u)$ and private vertices $p_u^1,p_u^2$ of them, and call
\[
X_u=\{p_u^1,r_u^1,u,r_u^2,p_u^2\},\qquad \cl(u)=\{u,p_u^1,p_u^2\}
\]
the \emph{frame} and the \emph{core} of $u$. By~\eqref{eq:stab}, $u\not\sim p_u^i$, and $p_u^1\not\sim r_u^2$. So $G[X_u]$ is the path $p_u^1\hbox{-}r_u^1\hbox{-}u\hbox{-}r_u^2\hbox{-}p_u^2$, possibly with the extra edge $p_u^1p_u^2$. In particular, $X_u$ is connected. Let $\Phi$ be the \emph{far graph}: its edges are the far pairs inside $Y_{\ge2}$ and its vertices are the vertices of $Y_{\ge2}$ that lie in at least one far pair inside $Y_{\ge 2}$.

\begin{lemma}\label{lem:frames}
If $ab$ is an edge of $\Phi$, then $X_a$ and $X_b$ are disjoint and anticomplete.
\end{lemma}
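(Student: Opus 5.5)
The plan is a direct case check: first show that the two frames share no vertex, then rule out each possible kind of edge between them, either by the hypotheses or by exhibiting an induced $P_6$ in $G=G[Y\cup R]$. Throughout I use three facts. $R$ is stable. A private vertex $p_u^i$ has type exactly $\{r_u^i\}$. By \eqref{eq:stab}, two vertices of $Y$ sharing a point are non-adjacent. Since $ab$ is a far pair, $a\not\sim b$ and $S(a)\cap S(b)=\emptyset$.

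\emph{Disjointness.} The points $r_a^1,r_a^2$ lie in $S(a)$ and $r_b^1,r_b^2$ lie in $S(b)$, so they are distinct. Each $p_a^i$ has type $\{r_a^i\}$ and each $p_b^k$ has type $\{r_b^k\}$, so these private vertices are distinct from one another. They also differ from $a$ and $b$, which have at least two points.

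\emph{Anticompleteness.} Point–point pairs are non-adjacent because $R$ is stable. For a point of $X_a$ against a vertex of $X_b$: $b\not\sim r_a^i$ because the types are disjoint, and $p_b^k\not\sim r_a^i$ because the type of $p_b^k$ is $\{r_b^k\}$. The symmetric statements hold with $a$ and $b$ exchanged. It remains to treat the pairs of $Y$-vertices; $a\not\sim b$ is given.

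Suppose first that $a\sim p_b^k$, and let $k'$ be the other index. I claim that
\[ r_a^1\hbox{-}a\hbox{-}p_b^k\hbox{-}r_b^k\hbox{-}b\hbox{-}r_b^{k'} \]
is an induced $P_6$. Its edges are present. The non-edges follow from the facts already listed: $r_a^1$ has no neighbour among $p_b^k,r_b^k,b,r_b^{k'}$; $a$ is not adjacent to $r_b^k,b,r_b^{k'}$ by disjointness of types; $p_b^k\not\sim b$ by \eqref{eq:stab}; $p_b^k\not\sim r_b^{k'}$ since $p_b^k$ is private. This contradiction gives $a\not\sim p_b^k$, and by symmetry $b\not\sim p_a^i$.

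Suppose finally that $p_a^i\sim p_b^k$. Then
\[ a\hbox{-}r_a^i\hbox{-}p_a^i\hbox{-}p_b^k\hbox{-}r_b^k\hbox{-}b \]
is an induced $P_6$. The non-edges come from the previous step ($a\not\sim p_b^k$ and $b\not\sim p_a^i$), from \eqref{eq:stab} ($a\not\sim p_a^i$ and $b\not\sim p_b^k$), from privacy ($p_a^i\not\sim r_b^k$ and $p_b^k\not\sim r_a^i$), and from the disjointness of types together with the stability of $R$. This contradiction completes the proof.

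The only delicate step is choosing the paths. In particular, the path for $a\sim p_b^k$ must avoid the possible edge $p_b^1p_b^2$, so it should end at the point $r_b^{k'}$ rather than at $p_b^{k'}$. The $p$–$p$ case must be done after the $a$–$p$ case, since it uses its conclusion.
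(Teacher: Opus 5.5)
Your proof is correct and follows the paper's argument essentially verbatim: the same disjointness reasoning, the same two induced paths $r_a^1\hbox{-}a\hbox{-}p_b^k\hbox{-}r_b^k\hbox{-}b\hbox{-}r_b^{k'}$ and $a\hbox{-}r_a^i\hbox{-}p_a^i\hbox{-}p_b^k\hbox{-}r_b^k\hbox{-}b$, and the second case relies on the first exactly as in the paper. Your explicit check of each non-edge (privacy, stability of $R$, disjoint types, and \eqref{eq:stab}) spells out what the paper leaves to the reader.
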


\begin{proof}
The sets are disjoint because $S(a)\cap S(b)=\emptyset$ and privates of different points are different. Points of $X_a$ are not adjacent to $b$ (disjoint types) or to $p_b^j$ (privacy), and symmetrically. It remains to consider pairs inside $Y$. If $a\sim p_b^1$, then $r_a^1\hbox{-}a\hbox{-}p_b^1\hbox{-}r_b^1\hbox{-}b\hbox{-}r_b^2$ is an induced $P_6$. If $p_a^i\sim p_b^j$, then $a\hbox{-}r_a^i\hbox{-}p_a^i\hbox{-}p_b^j\hbox{-}r_b^j\hbox{-}b$ is an induced $P_6$ (using the previous case for $a\not\sim p_b^j$ and $b\not\sim p_a^i$). The remaining cases are symmetric.
\end{proof}

A vertex $t$ \emph{touches} $X_u$ if $t\notin X_u$ and $t$ has a neighbor in $X_u$. If $ab\in E(\Phi)$ and $t$ touches both $X_a$ and $X_b$, we say that $t$ \emph{connects} $ab$.

\begin{lemma}\label{lem:kinds}
Let $ab\in E(\Phi)$ and $t$ connect $ab$. Then $t\in Y$, and for each $u\in\{a,b\}$ either $N(t)\cap X_u=\cl(u)$, or $N(t)\cap X_u=\{r_u^1,r_u^2\}$ and $p_u^1\not\sim p_u^2$.
\end{lemma}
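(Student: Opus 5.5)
The plan is to establish $t\in Y$ directly, and then obtain the shape of $N(t)\cap X_u$ from a single ``three-vertex tail'' obstruction, using $P_6$-freeness together with \cref{lem:frames} and \eqref{eq:stab}.

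\emph{Step 1: $t\in Y$.} Suppose $t\in R$. Since $R$ is stable and $t\notin X_a$, $t$ is not adjacent to $r_a^1,r_a^2$. If $t\sim p_a^i$, then $t\in S(p_a^i)=\{r_a^i\}$, which is impossible. Hence $t\sim a$, and symmetrically $t\sim b$. This gives $t\in S(a)\cap S(b)$, contradicting that $ab$ is a far pair. So $t\in Y$.

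Two constraints now follow from \eqref{eq:stab}. If $t\sim u$, then $S(t)\cap S(u)=\emptyset$, so $t\not\sim r_u^1,r_u^2$. If $t\sim p_u^i$, then $t\not\sim r_u^i$. In particular $t$ is not complete to $X_u$.

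\emph{Step 2: the tail obstruction.} Fix $u\in\{a,b\}$ and let $v$ be the other vertex. The set $X_v$ is connected, $t$ has a neighbor in it, and $t$ is not complete to it. So there are $x,x'\in X_v$ with $x\sim x'$, $t\sim x$ and $t\not\sim x'$. Suppose $X_u$ contains an induced path $v_1\hbox{-}v_2\hbox{-}v_3$ with $N(t)\cap\{v_1,v_2,v_3\}=\{v_3\}$. Because $X_u$ and $X_v$ are disjoint and anticomplete by \cref{lem:frames}, the path $v_1\hbox{-}v_2\hbox{-}v_3\hbox{-}t\hbox{-}x\hbox{-}x'$ is then an induced $P_6$, a contradiction. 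So no such tail exists in $X_u$. Recall that $G[X_u]$ is the path $p_u^1\hbox{-}r_u^1\hbox{-}u\hbox{-}r_u^2\hbox{-}p_u^2$, possibly with the extra edge $p_u^1p_u^2$.

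\emph{Step 3: case analysis on $N=N(t)\cap X_u$, which is nonempty.} Write $p^i,r^i$ for $p_u^i,r_u^i$.

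If $u\in N$, then $N\subseteq\{u,p^1,p^2\}$. If $p^1\notin N$, then $p^1\hbox{-}r^1\hbox{-}u$ is a forbidden tail, and similarly for $p^2$. Hence $N=\cl(u)$.

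If $u\notin N$ and $r^1\in N$, then $r^2\in N$, since otherwise $r^2\hbox{-}u\hbox{-}r^1$ is a tail; note that $r^1\not\sim r^2$ because $R$ is stable. The constraints from Step 1 then exclude $p^1,p^2$, so $N=\{r^1,r^2\}$. If $p^1\sim p^2$, then $p^2\hbox{-}p^1\hbox{-}r^1$ is an induced tail, so $p^1\not\sim p^2$. The case $r^2\in N$ is symmetric.

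If $N\subseteq\{p^1,p^2\}$ and, say, $p^1\in N$, then $u\hbox{-}r^1\hbox{-}p^1$ is an induced tail, which is a contradiction.

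These cases exhaust the possibilities and give exactly the two alternatives in the statement.

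I expect the only delicate point to be Step 2. There one must make sure that $t$ is never complete to the opposite frame, which is what supplies $x'$, and that the six vertices really induce a path, which is where \cref{lem:frames} is used. The case analysis in Step 3 is then routine.
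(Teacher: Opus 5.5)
Your proof is correct and follows essentially the same route as the paper. Both arguments show $t\in Y$, use \eqref{eq:stab} to see that $t$ is mixed on the opposite frame, obtain the same ``no three-vertex tail'' property from an induced $P_6$ via \cref{lem:frames}, and finish with the same case analysis on $N(t)\cap X_u$; the only difference is that your Step 1 spells out more carefully why a point touching both frames would have to be adjacent to both $a$ and $b$.
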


\begin{proof}
Since a point adjacent to vertices of both frames would lie in $S(a)\cap S(b)=\emptyset$, $t\in Y$. By~\eqref{eq:stab} $t$ is complete to neither frame. So $t$ is mixed on the connected set $X_b$. Since $X_b$ is connected, there is an edge $yy'$ of $G[X_b]$ with $t\sim y$, $t\not\sim y'$. Let $x\in X_a\cap N(t)$ and let $x\hbox{-}x'\hbox{-}x''$ be an induced path in $G[X_a]$. If $t\not\sim x'$ and $t\not\sim x''$, then $x''\hbox{-}x'\hbox{-}x\hbox{-}t\hbox{-}y\hbox{-}y'$ is an induced $P_6$ by \cref{lem:frames}. So
\begin{equation}\label{eq:star}
\text{for every } x\in N(t)\cap X_a \text{ and every induced path } x\hbox{-}x'\hbox{-}x'' \text{ in } G[X_a],\ \ t \sim x' \text{ or } t\sim x''.
\end{equation}
By~\eqref{eq:stab}, if $t\sim r_a^i$ then $t\not\sim a$ and $t\not\sim p_a^i$. If $t\sim r_a^1$ and $t\not\sim r_a^2$, then the path $r_a^1\hbox{-}a\hbox{-}r_a^2$ violates~\eqref{eq:star}. If $t\sim r_a^1,r_a^2$, then $N(t)\cap X_a=\{r_a^1,r_a^2\}$. If $p_a^1\sim p_a^2$, then the path $r_a^1\hbox{-}p_a^1\hbox{-}p_a^2$ violates~\eqref{eq:star}. If $t$ is adjacent to neither point, then $N(t)\cap X_a\subseteq\cl(a)$ is nonempty. 
By applying~\eqref{eq:star} to
$p_a^i\hbox{-}r_a^i\hbox{-}a$ and $a\hbox{-}r_a^i\hbox{-}p_a^i$, $N(t)\cap X_a=\cl(a)$. The same holds for $b$.
\end{proof}

For $t\in Y$ let
\begin{gather*}
\Adj(t)=\{u\in Y_{\ge2}: t \text{ is complete to } \cl(u)\},\\
\Cov(t)=\{u\in Y_{\ge2}: t \text{ is complete to }X_u\setminus \cl(u)\},\\
\Sh(t)=\Adj(t)\cup\Cov(t).
\end{gather*}
We call $\Sh(t)$ the \emph{shadow} of $t$. The sets $\Adj(t)$ and $\Cov(t)$ are disjoint by~\eqref{eq:stab}.

\begin{lemma}\label{lem:shadow}
Let $t\in Y$.
\begin{enumerate}
\item If $v\in\Sh(t)$ and $vw\in E(\Phi)$, then $t\notin X_w$.
\item Let $vw\in E(\Phi)$. Then $t$ connects $vw$ if and only if $v,w\in\Sh(t)$.
\item If $G[Y]$ is connected, every edge of $\Phi$ is connected by some vertex.
\end{enumerate}
\end{lemma}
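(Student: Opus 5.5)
Part (a) is a short case check. Suppose $v\in\Sh(t)$, $vw\in E(\Phi)$ and $t\in X_w$. By \cref{lem:frames}, $X_v$ and $X_w$ are disjoint and anticomplete, so $t\notin X_v$ and $t$ has no neighbor in $X_v$. But $v\in\Sh(t)$ means $t$ is complete either to $\cl(v)$ or to $X_v\setminus\cl(v)=\{r_v^1,r_v^2\}$. Both sets are nonempty subsets of $X_v$. This is a contradiction.

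For (b), the forward direction is \cref{lem:kinds}. If $t$ connects $vw$, then for each $u\in\{v,w\}$ we have $N(t)\cap X_u=\cl(u)$ or $N(t)\cap X_u=\{r_u^1,r_u^2\}$, so $u\in\Adj(t)$ or $u\in\Cov(t)$. For the converse, suppose $v,w\in\Sh(t)$. Then $t$ has a neighbor in both $X_v$ and $X_w$. By (a), applied once with the roles $(v,w)$ and once with $(w,v)$, we get $t\notin X_w$ and $t\notin X_v$. Hence $t$ touches both frames, i.e.\ connects $vw$. I should also check that the definitions allow $t$ to be a vertex of $Y$ such as $v$ itself. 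Indeed $v\notin\Sh(v)$, since $v\not\sim v$ and $v$ is not complete to its own points in the required sense (it lies in $X_v$ anyway), so there is no degenerate case.

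For (c), let $vw\in E(\Phi)$ with $G[Y]$ connected. Since $X_v$ and $X_w$ are disjoint and anticomplete, I take a shortest path $P$ in $G$ from $X_v$ to $X_w$ that uses, apart from its ends, only vertices of $Y$. Such a path exists because both frames contain vertices of $Y$ and $G[Y]$ is connected, so a path inside $G[Y]$ joins $v$ to $w$. Let $P=x_0\hbox{-}x_1\hbox{-}\cdots\hbox{-}x_k$ with $x_0\in X_v$, $x_k\in X_w$ and the interior disjoint from both frames, so $k\ge2$. If $k=2$, then $x_1$ connects $vw$ and we are done. So I assume $k\ge3$ and aim for an induced $P_6$.

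The main obstacle is this case $k\ge3$. Here $x_1$ touches $X_v$ but not $X_w$, and $x_{k-1}$ touches $X_w$ but not $X_v$. Since stars are stable, $x_1$ is not complete to $X_v$ (it would then be adjacent to two adjacent vertices $r^1_v$ and $v$ sharing a point). So $x_1$ is mixed on the connected set $X_v$, and I can pick an edge $yy'$ of $G[X_v]$ with $x_1\sim y$ and $x_1\not\sim y'$. In the same way I pick an edge $zz'$ of $G[X_w]$ with $x_{k-1}\sim z$ and $x_{k-1}\not\sim z'$. The walk $y'\hbox{-}y\hbox{-}x_1\hbox{-}\cdots\hbox{-}x_{k-1}\hbox{-}z\hbox{-}z'$ has at least $6$ vertices. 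It is induced by minimality of $P$ (interior vertices other than $x_1$, resp.\ $x_{k-1}$, have no neighbors in the frames) together with the anticompleteness from \cref{lem:frames}. Its first six vertices therefore form an induced $P_6$, a contradiction. The care needed is in ensuring that only $x_1$ meets $X_v$ and only $x_{k-1}$ meets $X_w$; choosing $P$ shortest among all paths between the frames guarantees this.
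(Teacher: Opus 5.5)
Your proof is correct and follows essentially the paper's route. Part (a) is a uniform version of the paper's case check via \cref{lem:frames}, (b) combines \cref{lem:kinds} with (a), and (c) is the same shortest-path argument producing an induced $P_6$; restricting the interior of the path to $Y$ is a nice touch that makes the stable-star argument for $x_1$ immediate. One slip, which nothing in your argument actually depends on: the aside claiming $v\notin\Sh(v)$ is false, because $v\sim r_v^1,r_v^2$ gives $v\in\Cov(v)$, but the degenerate case is already excluded since (a) gives $t\notin X_v\cup X_w$.
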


\begin{proof}
(a) As $t\in Y$, we must show $t\notin\cl(w)$. If $t=w$, then $v\in\Adj(w)$ contradicts $v\not\sim w$ and $v\in\Cov(w)$ contradicts $S(v)\cap S(w)=\emptyset$. If $t=p_w^i$, then $v\in\Adj(t)$ contradicts \cref{lem:frames}, and $v\in\Cov(t)$ is impossible as $|S(p_w^i)|=1$.

(b) If $t$ connects $vw$, \cref{lem:kinds} gives $v,w\in\Sh(t)$. Conversely, $t\notin X_v\cup X_w$ by~(a). By the definition of shadow, $t$ connects $vw$.

(c) Let $ab\in E(\Phi)$. As $G[Y]$ is connected, there is a path from $X_a$ to $X_b$. Let $x\hbox{-}v_1\hbox{-}\cdots\hbox{-}v_k\hbox{-}y$ be a shortest one, with $x\in X_a$, $y\in X_b$.  By \cref{lem:frames}, $k\ge1$. Suppose that $k\ge2$.
By~\eqref{eq:stab}, $v_1$ is mixed on $X_a$, namely there is an edge $xx'$ of $G[X_a]$ with $v_1\sim x$ and $v_1\not\sim x'$ (we may re-choose $x$). Similarly, there is an edge $yy'$ of $G[X_b]$ with $v_k\sim y$, $v_k\not\sim y'$. By the minimality of the path, $x'\hbox{-}x\hbox{-}v_1\hbox{-}\cdots\hbox{-}v_k\hbox{-}y\hbox{-}y'$ is an induced path with at least six vertices, a contradiction. So $k=1$ and $v_1$ connects $ab$.
\end{proof}

\begin{lemma}\label{lem:propagate}
Let $t$ connect $xy\in E(\Phi)$, and let $u\ne x$ be a $\Phi$-neighbor of $y$ with $ux\notin E(\Phi)$. Then $u\in\Sh(t)$.
\end{lemma}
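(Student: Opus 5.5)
By \cref{lem:shadow}(b), it suffices to show that $t$ connects the $\Phi$-edge $uy$. Since $t$ connects $xy$, it already touches $X_y$, and $t\notin X_u$ will follow from \cref{lem:shadow}(a) once we know $y\in\Sh(t)$, which \cref{lem:kinds} gives. So the task is to show that $t$ has a neighbor in $X_u$. Suppose not, i.e. $t$ is anticomplete to $X_u$; I will build an induced $P_6$.

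The tools are as follows. By \cref{lem:frames}, $X_u$ and $X_y$ are disjoint and anticomplete, and so are $X_x$ and $X_y$. By \cref{lem:kinds}, $N(t)\cap X_x$ is either $\cl(x)$ or $\{r_x^1,r_x^2\}$, and similarly for $y$. In either case I fix a two-vertex ending on the $y$ side: $t\hbox{-}y\hbox{-}r_y^1$ if $t$ is complete to $\cl(y)$, and $t\hbox{-}r_y^1\hbox{-}y$ otherwise. This is an induced path, because by~\eqref{eq:stab} a vertex adjacent to $r_y^1$ is not adjacent to $y$.

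The hypothesis $ux\notin E(\Phi)$, with $u,x\in Y_{\ge2}$, means that $u\sim x$ or $S(u)\cap S(x)\neq\emptyset$.

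\emph{Case $u\sim x$.} By~\eqref{eq:stab}, $S(u)\cap S(x)=\emptyset$. If $t\sim x$, take $r_u^1\hbox{-}u\hbox{-}x\hbox{-}t$ followed by the $y$-ending. If instead $t\sim r_x^1$ and $t\not\sim x$, take $r_u^1\hbox{-}u\hbox{-}x\hbox{-}r_x^1\hbox{-}t$ followed by the first vertex of the $y$-ending. Checking that these paths are induced uses the following facts.
\begin{itemize}
\item $t$ is anticomplete to $X_u$ by assumption.
\item $u\not\sim y$, and $u,x$ are nonadjacent to the points of $X_y$, because the types are disjoint.
\item $X_x$ is anticomplete to $X_y$.
\item Points are pairwise nonadjacent.
\item $x\not\sim r_u^1$ and $u\not\sim r_x^1$, because $S(u)\cap S(x)=\emptyset$.
\end{itemize}

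\emph{Case $u\not\sim x$ and there is a point $r\in S(u)\cap S(x)$.} Here I route through $r$. If $t\sim x$, try $u\hbox{-}r\hbox{-}x\hbox{-}t$ followed by the $y$-ending. Since $t\sim x$ forces $t\not\sim r$ by~\eqref{eq:stab}, this path is induced as long as $r$ is not adjacent to anything on the $y$ side. That holds because $r\notin S(y)$: the types $S(u)$ and $S(y)$ are disjoint. If instead $t\sim r_x^1,r_x^2$ and $t\not\sim x$, two sub-cases arise.
\begin{itemize}
\item If $t\not\sim r$, use $u\hbox{-}r\hbox{-}x\hbox{-}r_x^i\hbox{-}t$ with $r_x^i\neq r$, extended by one vertex on the $y$ side.
\item If $t\sim r$, use $p\hbox{-}u\hbox{-}r\hbox{-}t$ followed by the $y$-ending, where $p$ is a neighbor of $u$ in $X_u$ that is nonadjacent to $r$. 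Choose $p$ to be a point $r_u^i\neq r$. Then $p$ is not adjacent to $t$ because $t$ is anticomplete to $X_u$. Note that $r\in S(t)$ is harmless here, since $u\not\sim t$.
\end{itemize}

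The main obstacle is the bookkeeping in this second case: making sure that $r$ may coincide with some $r_u^i$ or $r_x^i$ without breaking the path, and that no chord is created between $r$ and $t$. I expect to handle these by choosing the index $i$ so that the relevant frame point differs from $r$. In every case we obtain an induced $P_6$, contradicting $P_6$-freeness. Hence $t$ touches $X_u$, so $t$ connects $uy$, and \cref{lem:shadow}(b) gives $u\in\Sh(t)$.
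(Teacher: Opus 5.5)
Your reduction to showing that $t$ touches $X_u$, your choice of the ending $t\hbox{-}y'\hbox{-}y''$, and your case split all follow the paper. Your paths for the case $u\sim x$ and for the case $u\not\sim x$ with $t\sim x$ are exactly the paper's. The gap is in the sub-case $u\not\sim x$, $t$ covers $x$, $t\not\sim r$. There the path $u\hbox{-}r\hbox{-}x\hbox{-}r_x^i\hbox{-}t\hbox{-}y'$ needs the non-edge $u\not\sim r_x^i$, that is, $r_x^i\notin S(u)$, and you never justify this. Nothing in your argument rules out $S(u)$ containing both $r_x^1$ and $r_x^2$ in addition to $r$. The assumption that $t$ is anticomplete to $X_u$ only gives $r_x^i\notin\{r_u^1,r_u^2\}$. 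Meanwhile, the condition you plan to enforce, $r_x^i\neq r$, holds automatically in this sub-case, since $t\sim r_x^1,r_x^2$ but $t\not\sim r$. So choosing $i$ ``so that the frame point differs from $r$'' does not address the actual obstruction.

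The repair is short, and it is how the paper organizes this case. If $r_x^i\notin S(u)$ for some $i$, use your path with that $i$. Otherwise $r_x^1\in S(u)\cap S(t)$, and your other sub-case applies with $r_x^1$ in place of $r$. This works because the path $r_u^j\hbox{-}u\hbox{-}r\hbox{-}t\hbox{-}y'\hbox{-}y''$ only uses $r\in S(u)\cap S(t)$ and never uses $r\in S(x)$. Equivalently, choose the common point $r$ of $u$ and $x$ to be adjacent to $t$ whenever possible. The paper's final case uses $p_u^1\hbox{-}r_u^1\hbox{-}u\hbox{-}q'\hbox{-}t\hbox{-}y'$ instead of your $r_u^j\hbox{-}u\hbox{-}r\hbox{-}t\hbox{-}y'\hbox{-}y''$. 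Both paths are induced, so with this fix your proof is complete.
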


\begin{proof}
By \cref{lem:shadow} (a), $t\notin X_u\cup X_x\cup X_y$. It suffices to show that $t$ touches $X_u$, for then $t$ connects $uy$ and \cref{lem:shadow} (b) applies. 
Suppose for a contradiction that $t$ has no neighbor in $X_u$. 
Since $u,x\in Y_{\ge2}$ do not form a far pair, either $u\sim x$, or $u\not\sim x$ and there is a point $q\in S(u)\cap S(x)$. By \cref{lem:kinds}, $t$ \emph{sees} $y$ ($N(t)\cap X_y=\cl(y)$) or \emph{covers} $y$ ($N(t)\cap X_y=\{r_y^1,r_y^2\}$). Put $(y',y'')=(y,r_y^1)$ in the first case and $(y',y'')=(r_y^1,y)$ in the second. In both cases $t\hbox{-}y'\hbox{-}y''$ is an induced path. The following are induced $P_6$'s:
\begin{itemize}
\item $u\sim x$, $t$ sees $x$: \ $r_u^1\hbox{-}u\hbox{-}x\hbox{-}t\hbox{-}y'\hbox{-}y''$;
\item $u\sim x$, $t$ covers $x$: \ $r_u^1\hbox{-}u\hbox{-}x\hbox{-}r_x^1\hbox{-}t\hbox{-}y'$;
\item $u\not\sim x$, $t$ sees $x$: \ $u\hbox{-}q\hbox{-}x\hbox{-}t\hbox{-}y'\hbox{-}y''$;
\item $u\not\sim x$, $t$ covers $x$, $q\notin S(t)$, and $r_x^i\notin S(u)$ for some $i$: \ $u\hbox{-}q\hbox{-}x\hbox{-}r_x^i\hbox{-}t\hbox{-}y'$.
\end{itemize}
In the remaining case $t$ covers $x$ and $t,u$ have a common point $q'$ ($q'=q$ or $q'=r_x^i$). As $t$ has no neighbor in $X_u$, $q'\notin\{r_u^1,r_u^2\}$ and $p_u^1\hbox{-}r_u^1\hbox{-}u\hbox{-}q'\hbox{-}t\hbox{-}y'$ is an induced $P_6$. In each case the non-edges follow from the stability of $R$, from~\eqref{eq:stab}, from the disjointness of the types of $\Phi$-adjacent vertices, from \cref{lem:frames} applied to the edges $uy$ and $xy$ of $\Phi$, from \cref{lem:kinds}, and from the assumption that $t$ has no neighbor in $X_u$. This contradiction proves the lemma.
\end{proof}

\begin{lemma}\label{lem:triangle}
Let $x,y,u$ be pairwise adjacent in $\Phi$. There do not exist distinct $t,t'\in Y\setminus(X_x\cup X_y\cup X_u)$ such that $t$ connects $xy$ and has no neighbor in $X_u$, and $t'$ connects $yu$ and has no neighbor in $X_x$.
\end{lemma}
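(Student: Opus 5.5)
Suppose such $t,t'$ exist; I will build an induced $P_6$. By \cref{lem:kinds}, $t$ sees or covers each of $x,y$, and $t'$ sees or covers each of $y,u$; both $t,t'$ lie in $Y$. By \cref{lem:frames} the three frames $X_x,X_y,X_u$ are pairwise disjoint and anticomplete, and by hypothesis $t$ is anticomplete to $X_u$ and $t'$ is anticomplete to $X_x$. As in the proof of \cref{lem:propagate}, for $v\in\{x,y,u\}$ and a vertex $s$ seeing or covering $v$, let $(v'_s,v''_s)$ be $(v,r_v^1)$ if $s$ sees $v$ and $(r_v^1,v)$ if $s$ covers $v$; then $s\hbox{-}v'_s\hbox{-}v''_s$ is an induced path.

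\emph{Case $t\not\sim t'$.} Take the walk $x''_t\hbox{-}x'_t\hbox{-}t\hbox{-}y'\hbox{-}t'\hbox{-}u'_{t'}$ for a suitable common neighbor $y'$ of $t,t'$ in $X_y$. If $t$ and $t'$ both see $y$ or both cover $y$, such a $y'$ exists ($y$, or $r_y^1$). The path $x''_t\hbox{-}x'_t\hbox{-}t\hbox{-}y'\hbox{-}t'\hbox{-}u'_{t'}$ is then induced: there are no edges between different frames, $t$ misses $X_u$, $t'$ misses $X_x$, and $t\not\sim t'$. If one sees $y$ and the other covers $y$, their neighborhoods in $X_y$ are disjoint. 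In that case use a path through $X_y$ instead, e.g.\ $t\hbox{-}y\hbox{-}r_y^1\hbox{-}t'$ when $t$ sees and $t'$ covers, and extend by one vertex on each side: $x'_t\hbox{-}t\hbox{-}y\hbox{-}r_y^1\hbox{-}t'\hbox{-}u'_{t'}$. This has six vertices; check that it is induced, using $t\not\sim r_y^1$ and $t'\not\sim y$, which hold by the form of the neighborhoods in \cref{lem:kinds}.

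\emph{Case $t\sim t'$.} Use $x''_t\hbox{-}x'_t\hbox{-}t\hbox{-}t'\hbox{-}u'_{t'}\hbox{-}u''_{t'}$. The only nonedges to verify are between $\{t,t'\}$ and the far frames, which hold by hypothesis, and those between the frames, which hold by \cref{lem:frames}. So this is an induced $P_6$ outright, and $y$ is not even needed.

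The main obstacle is the mixed subcase with $t\not\sim t'$, and more generally ensuring that the chosen frame vertices are really nonadjacent to the other connector. For example, one must check that when $t$ covers $x$ the vertex $x$ itself is not adjacent to $t$; this is given by \cref{lem:kinds}, since covering means $N(t)\cap X_x=\{r_x^1,r_x^2\}$. I expect each subcase to reduce to such a check, with the freedom to use $r_y^2$ or the private vertices $p_y^i$ if some adjacency blocks the first choice. The argument uses distinctness of $t,t'$ and $t,t'\notin X_x\cup X_y\cup X_u$ throughout.
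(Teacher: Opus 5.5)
Your proof is correct and essentially matches the paper's: for $t\sim t'$ you take a $P_6$ through the edge $tt'$ with two frame vertices at each end, and for $t\not\sim t'$ you route through a common neighbor in $X_y$, or through $y\hbox{-}r_y^1$ when one of $t,t'$ sees $y$ and the other covers it. The only difference is cosmetic: you use the explicit induced paths $s\hbox{-}v'_s\hbox{-}v''_s$ given by \cref{lem:kinds}, whereas the paper uses an arbitrary edge of the frame on which the connector is mixed; your closing remark about needing $r_y^2$ or $p_y^i$ as fallbacks is unnecessary, since the paths you wrote down already work in every subcase.
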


\begin{proof}
The three frames are pairwise disjoint and anticomplete by \cref{lem:frames}. Since $t$ is mixed on the connected set $X_x$, there is an edge $\alpha\alpha'$ of $G[X_x]$ with $t\sim\alpha$, $t\not\sim\alpha'$. Similarly, there is an edge $\beta\beta'$ of $G[X_u]$ with $t'\sim\beta$, $t'\not\sim\beta'$. If $t\sim t'$, then $\alpha'\hbox{-}\alpha\hbox{-}t\hbox{-}t'\hbox{-}\beta\hbox{-}\beta'$ is an induced $P_6$. So $t\not\sim t'$. If $t$ and $t'$ have a common neighbor $m\in X_y$, then $\alpha'\hbox{-}\alpha\hbox{-}t\hbox{-}m\hbox{-}t'\hbox{-}\beta$ is an induced $P_6$. Otherwise, by \cref{lem:kinds}, one of $t,t'$ is complete to $\cl(y)$ and the other is adjacent to $r_y^1$ but not to $y$. Then $\alpha\hbox{-}t\hbox{-}y\hbox{-}r_y^1\hbox{-}t'\hbox{-}\beta$ or $\alpha\hbox{-}t\hbox{-}r_y^1\hbox{-}y\hbox{-}t'\hbox{-}\beta$ is an induced $P_6$.
\end{proof}

\begin{lemma}\label{lem:uct}
Suppose that $G[Y]$ is connected, and let $K$ be a connected subgraph of $\Phi$ with at least one edge. Then there is $t\in Y\setminus V(K)$ with $V(K)\subseteq\Sh(t)$.
\end{lemma}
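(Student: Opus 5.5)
Since $K$ has an edge, \cref{lem:shadow}~(c) gives an edge $xy$ of $K$ and a vertex $t$ connecting it, and by \cref{lem:shadow}~(b) $x,y\in\Sh(t)$. The plan is to grow the set of vertices of $K$ lying in the shadow, replacing $t$ when necessary, until it is all of $V(K)$. Among all vertices $t\in Y$ with $\Sh(t)\cap V(K)$ containing an edge of $K$, I would choose one maximizing $|\Sh(t)\cap V(K)|$, and suppose for a contradiction that some vertex of $K$ lies outside $\Sh(t)$.

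By connectivity of $K$, there is an edge $yu$ of $K$ with $y\in\Sh(t)$ and $u\notin\Sh(t)$. Moreover $y$ has a $K$-neighbor $x\in\Sh(t)$. This holds first for the initial edge, and I would arrange it in general by walking along a path in $K$ from the initial edge inside $\Sh(t)\cap V(K)$; the subgraph of $K$ induced on the shadow should be connected by the same propagation argument. By \cref{lem:shadow}~(b), $t$ connects $xy$. If $ux\notin E(\Phi)$, then \cref{lem:propagate} gives $u\in\Sh(t)$, a contradiction. So $x,y,u$ form a triangle in $\Phi$. In this case $t$ connects $xy$ and, since $u\notin \Sh(t)$, $t$ has no neighbor in $X_u$, because touching $X_u$ together with $X_y$ would mean connecting $uy$.

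Now take $t'$ connecting $yu$ from \cref{lem:shadow}~(c). If $t'$ has no neighbor in $X_x$, then \cref{lem:triangle} is contradicted, once $t\ne t'$ (true since $u\in\Sh(t')$) and both lie outside the three frames (by \cref{lem:shadow}~(a)). So $t'$ touches $X_x$ and hence connects $xy$ and $xu$. I would then show $\Sh(t)\cap V(K)\subseteq\Sh(t')$, which gives $|\Sh(t')\cap V(K)|>|\Sh(t)\cap V(K)|$ and contradicts maximality. To prove this, for each $v\in\Sh(t)\cap V(K)$ I would propagate along $K$-paths inside the shadow from $x$ or $y$: for each step, either \cref{lem:propagate} applies to $t'$, or a triangle arises and \cref{lem:triangle} must be invoked again with the pair $t,t'$.

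The main obstacle is this transfer step. The triangle case can recur along paths in $\Sh(t)$, and I would have to check that \cref{lem:triangle} with suitable roles for $t,t'$ always forbids the bad configuration. A cleaner fallback is to maximize instead over connected subgraphs $K'\subseteq K$ with at least one edge for which some $t$ satisfies $V(K')\subseteq\Sh(t)$. Then one only needs to add a single vertex $u$ adjacent to $K'$, and when the triangle case blocks $t$, one replaces it by $t'$ and checks, via \cref{lem:propagate} applied within $K'$, that $t'$ still shadows all of $K'$. Finally, $t\notin V(K)$ because of \cref{lem:shadow}~(a), since every vertex of $K$ has a $\Phi$-neighbor in the shadow.
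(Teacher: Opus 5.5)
There is a genuine gap: the step you yourself flag as the main obstacle is the heart of the lemma, and the proposal does not prove it. The rest is sound. Your fallback (grow a connected $K'\subseteq K$ with an edge and $V(K')\subseteq\Sh(t)$, one vertex at a time) is the paper's scheme. Your handling of the first triangle $x,y,u$ via \cref{lem:triangle} is correct, and so is the final argument that $t\notin V(K)$.

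\textbf{What is missing.} You need that the replacement $t'$ still has all of $V(K')$ in its shadow, and the mechanism you sketch does not close by itself. Suppose you step from $m'\in\Sh(t')$ to a $K'$-neighbor $m$, using an edge $m''m'$ connected by $t'$ with $m''\in V(K')$. If \cref{lem:propagate} does not apply, you get a triangle $m,m',m''$ lying entirely in $\Sh(t)$. Then $t$ touches all three frames, so \cref{lem:triangle} says nothing about the pair $t,t'$. The only triangles on which \cref{lem:triangle} works are those containing $u$. Your fallback's phrase ``via \cref{lem:propagate} applied within $K'$'' asserts the transfer without supplying this.

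\textbf{The missing idea.} Apply \cref{lem:propagate} to $t$ (not to $t'$), using $u\notin\Sh(t)$, to get an invariant. Along a path $y=m_0,m_1,\dots,m_\ell$ in $K'$, prove by induction on $j$ that (i) $m_ju\in E(\Phi)$ and (ii) $m_j\in\Sh(t')$.

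For (i): $t$ connects $m_jm_{j-1}$ by \cref{lem:shadow}(b), and $u\ne m_j$ is a $\Phi$-neighbor of $m_{j-1}$. So $m_ju\notin E(\Phi)$ would give $u\in\Sh(t)$ by \cref{lem:propagate}. Hence every step lands in the triangle $m_j,m_{j-1},u$.

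For (ii): $t$ connects $m_jm_{j-1}$ and has no neighbor in $X_u$, since otherwise it would connect $m_{j-1}u$. Also $t'$ connects $m_{j-1}u$. If $m_j\notin\Sh(t')$, then $t'$ has no neighbor in $X_{m_j}$, since otherwise it would connect $m_ju$. Together with $t\ne t'$ and the frame conditions from \cref{lem:shadow}(a), this contradicts \cref{lem:triangle}.

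Your base case is exactly $j=1$; invariant (i) is what lets the triangle argument repeat.

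\textbf{Commit to the fallback.} Your primary version, maximizing $|\Sh(t)\cap V(K)|$, would require transferring all of $\Sh(t)\cap V(K)$ to $\Sh(t')$. The argument above only transfers a connected set containing $y$. Your claim that $K[\Sh(t)\cap V(K)]$ is connected is unsupported; $K$ need not even be an induced subgraph of $\Phi$. So you should maximize the connected set rather than the whole shadow.
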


\begin{proof}
We first prove the following claim.

\begin{claim}\label{clm:phi adjacent in K}
    Let $t\in Y$, $C\subseteq\Sh(t)$ be such that $\Phi[C]$ is connected, $u\in V(\Phi)\setminus\Sh(t)$ have a $\Phi$-neighbor $y\in C$, and $t'$ connect $yu$. Then every vertex in $C$ is $\Phi$-adjacent to $u$ and lies in $\Sh(t')$.
\end{claim}
\begin{subproof}[Proof of Claim \ref{clm:phi adjacent in K}]
    We go along a path $y=m_0,m_1,\dots,m_\ell$ of $\Phi[C]$ and prove, for $m=m_j$ and $m'=m_{j-1}$, first (i) $mu\in E(\Phi)$ and then (ii) $m\in\Sh(t')$. For $j=0$, (i) holds by hypothesis and (ii) by \cref{lem:shadow}~(b). For $j\ge1$: by \cref{lem:shadow} (b) $t$ connects $mm'$, and $u\ne m$ as $u\notin\Sh(t)$. If $mu\notin E(\Phi)$, then \cref{lem:propagate} (with $x=m$, $y=m'$) gives $u\in\Sh(t)$, a contradiction. This proves (i). For (ii), suppose $m\notin\Sh(t')$ and apply \cref{lem:triangle} to $(x,y,u)=(m,m',u)$. These are pairwise $\Phi$-adjacent. By \cref{lem:shadow} (a), $t\notin X_u$ and $t'\notin X_m$. The vertex $t$ connects $mm'$, and it has no neighbor in $X_u$, for otherwise it would connect $mu$ and  $u\in\Sh(t)$ by \cref{lem:shadow} (b). The vertex $t'$ connects $m'u$ (as $m',u\in\Sh(t')$), and it has no neighbor in $X_m$, for otherwise it would connect $mu$ and $m\in\Sh(t')$. Finally $t\ne t'$ because $u\in\Sh(t')\setminus\Sh(t)$. \Cref{lem:triangle} gives a contradiction, which proves (ii) and the claim.
\end{subproof}

Now order $V(K)$ as $v_1,\dots,v_n$ so that each $v_j$ with $j\ge2$ has a $K$-neighbor in $v_1,\dots,v_{j-1}$. By \cref{lem:shadow} (c) and (b), some $t_2$ has $v_1,v_2\in\Sh(t_2)$. 
Given $t_j$ with $\{v_1,\dots,v_j\}\subseteq\Sh(t_j)$, let $t_{j+1}=t_j$ if $v_{j+1}\in\Sh(t_j)$; otherwise let $t_{j+1}$ be a vertex connecting $yv_{j+1}$ for an earlier $K$-neighbor $y$ of $v_{j+1}$. The claim (with $C=\{v_1,\dots,v_j\}$) gives $\{v_1,\dots,v_{j+1}\}\subseteq\Sh(t_{j+1})$. Let $t=t_n$. If $t\in V(K)$, then $t$ has a $K$-neighbor $v\in\Sh(t)$, and \cref{lem:shadow} (a) gives $t\notin X_t$, which is a contradiction.
\end{proof}

\begin{lemma}\label{lem:2k2}
Let $ab,cd\in E(\Phi)$ with $a,b,c,d$ distinct, and suppose none of $ac,ad,bc,bd$ is an edge of $\Phi$. Then $a,b$ are both adjacent to both $c,d$.
\end{lemma}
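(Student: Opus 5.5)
The plan is to argue by contradiction with an explicit induced $P_6$ in each case. By symmetry (swap $a\leftrightarrow b$, $c\leftrightarrow d$, and the pair $\{a,b\}\leftrightarrow\{c,d\}$) it suffices to show $a\sim c$. So suppose $a\not\sim c$. Since $ac\notin E(\Phi)$ and both vertices lie in $Y_{\ge2}$, they are not a far pair, so they share a point $q\in S(a)\cap S(c)$. Because $S(a)\cap S(b)=\emptyset=S(c)\cap S(d)$, the point $q$ is adjacent to neither $b$ nor $d$. The frames $X_a,X_b$ are disjoint and anticomplete by \cref{lem:frames}, and so are $X_c,X_d$. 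Throughout, non-edges will come from the stability of $R$, from~\eqref{eq:stab}, and from privacy of the $p$'s, as in the earlier lemmas.

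The first attempt uses only $a$ and $c$. Take $r\in S(a)\setminus S(c)$ and $r'\in S(c)\setminus S(a)$, if such points exist, together with private vertices $p\in I_r$ and $p'\in I_{r'}$. Then $p\hbox{-}r\hbox{-}a\hbox{-}q\hbox{-}c\hbox{-}r'$ is a candidate $P_6$. Its edges are $p r$, $r a$, $a q$, $q c$ and $c r'$. Its non-edges would follow from $a\not\sim c$, from $r\notin S(c)$, $r'\notin S(a)$, from the stability of $R$, and from privacy of $p$ (so $p\not\sim q$ and $p\not\sim r'$). The only adjacencies still to rule out are $p\sim c$ and $p\sim a$. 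The second is excluded by~\eqref{eq:stab}, since $p,a\in I_r$. If $p\sim c$, I would switch to the path $r\hbox{-}p\hbox{-}c\hbox{-}q\hbox{-}a$ and extend it at the $a$-end into $X_b$, or into $X_d$, using the pairs $ab$ and $cd$.

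This leaves the degenerate situation where one type contains the other, say $S(a)\subseteq S(c)$. Here I would bring in the second pair. The vertex $b$ is related to $c$, and $d$ to $a$, either by an edge or by a shared point, because $bc,ad\notin E(\Phi)$. Suppose first that $b\sim c$. Choose $s\in S(b)$ and a point $r_a\in S(a)$; note that $r_a\ne q$ is not needed, since $q$ itself may serve. Then $s\hbox{-}b\hbox{-}c\hbox{-}q\hbox{-}a\hbox{-}r_a$ is the kind of path to aim for; if $r_a$ collides with $S(c)$, I would replace the last step by going from $a$ through a point to its private vertex. Suppose instead that $b\not\sim c$ with a shared point $s\in S(b)\cap S(c)$. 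Then $s\ne q$, and the paths $b\hbox{-}s\hbox{-}c\hbox{-}q\hbox{-}a$ and $r_b\hbox{-}b\hbox{-}s\hbox{-}c\hbox{-}q\hbox{-}a$ are the objects to extend to six vertices. The extension would use a point of $b$ outside $S(c)$, or the frame of $d$ when $S(b)\subseteq S(c)$ as well. If every type involved is nested inside $S(c)$, I would use $S(c)\cap S(d)=\emptyset$ and route the path through $d$ instead, or swap the roles of the two pairs.

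The main obstacle is organizing this case analysis so that it closes. The relations $bc$, $ad$ and $bd$ can each be an edge or a shared point, and the types can nest. The step I expect to be hardest is the configuration in which $S(a)\subseteq S(c)$ and $b,d$ both attach to $c$ and $a$ only through shared points. There I would first try the path $p_b\hbox{-}r_b\hbox{-}b\hbox{-}s\hbox{-}c\hbox{-}q\hbox{-}a$, truncated to six vertices, with $r_b\notin S(c)$. If no such $r_b$ exists, I would fall back on the symmetric argument with $d$ in the role of $a$.
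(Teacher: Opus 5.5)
Your reduction to proving $a\sim c$, and the choice of $q\in S(a)\cap S(c)$, are fine. After that, though, the proposal is not a proof: the case analysis is never closed (as you acknowledge), and two of the steps you do commit to fail.

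\begin{itemize}
\item In the crossing case with $p\sim c$, the fallback $r\hbox{-}p\hbox{-}c\hbox{-}q\hbox{-}a$ is not an induced path. Since $r\in S(a)$, we have $r\sim a$, so these five vertices span an induced $C_5$. A $P_6$-free graph may contain a $C_5$, so there is no induced path to extend.
\item In the nested case $S(a)\subseteq S(c)$ with $b\sim c$, the path $s\hbox{-}b\hbox{-}c\hbox{-}q\hbox{-}a\hbox{-}r_a$ fails for every choice of $r_a$. Every point of $a$ is then adjacent to $c$, and taking $r_a=q$ would repeat a vertex.
\end{itemize}

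The suggested repair, and the remaining configurations ($b\not\sim c$, nested types throughout), are only things you ``would try''. Splitting on how $S(a)$ and $S(c)$ nest does not lead to a case analysis that closes.

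The missing idea is to split on whether $a\sim d$.

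\emph{Case $a\not\sim d$.} Pick $q''\in S(a)\cap S(d)$; then $q''\ne q$. The induced paths $s\hbox{-}c\hbox{-}q\hbox{-}a\hbox{-}q''\hbox{-}d$ for $s\in S(c)\setminus S(a)$, and $c\hbox{-}q\hbox{-}a\hbox{-}q''\hbox{-}d\hbox{-}s$ for $s\in S(d)\setminus S(a)$, force $S(c)\cup S(d)\subseteq S(a)$. Then $S(b)$ misses $S(c)\cup S(d)$, so $b\sim c$ and $b\sim d$. Now $r_b\hbox{-}b\hbox{-}c\hbox{-}q\hbox{-}a\hbox{-}q''$ is an induced $P_6$.

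\emph{Case $a\sim d$.} Take a private vertex $p$ of the \emph{shared} point $q$, not of a point of $S(a)\setminus S(c)$. By~\eqref{eq:stab}, $p\not\sim a$ and $p\not\sim c$. Choose $r_a\in S(a)\setminus\{q\}$, $r_c\in S(c)\setminus\{q\}$, $r_b\in S(b)$ and $r_d\in S(d)$. Each of the following is an induced $P_6$:
\begin{itemize}
\item if $p\sim b$: $r_b\hbox{-}b\hbox{-}p\hbox{-}q\hbox{-}a\hbox{-}r_a$;
\item if $p\sim d$: $r_d\hbox{-}d\hbox{-}p\hbox{-}q\hbox{-}c\hbox{-}r_c$;
\item otherwise, if $b\sim d$: $p\hbox{-}q\hbox{-}a\hbox{-}d\hbox{-}b\hbox{-}r_b$, using $S(b)\cap S(d)=\emptyset$ from~\eqref{eq:stab};
\item otherwise, if $b\not\sim d$: $p\hbox{-}q\hbox{-}a\hbox{-}d\hbox{-}q'\hbox{-}b$ with $q'\in S(b)\cap S(d)$.
\end{itemize}

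Your ``hardest'' configuration $S(a)\subseteq S(c)$ automatically falls in the branch $a\sim d$. Indeed, $a\not\sim d$ would give a point of $S(a)\cap S(d)\subseteq S(c)\cap S(d)=\emptyset$.
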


\begin{proof}
By symmetry it suffices to show $a\sim c$. Suppose not,  let $q\in S(a)\cap S(c)$.

Suppose first $a\not\sim d$, and let $q''\in S(a)\cap S(d)$. Then $q''\ne q$. If $s\in S(c)\setminus S(a)$, then $s\hbox{-}c\hbox{-}q\hbox{-}a\hbox{-}q''\hbox{-}d$ is an induced $P_6$. If $s\in S(d)\setminus S(a)$, then $c\hbox{-}q\hbox{-}a\hbox{-}q''\hbox{-}d\hbox{-}s$ is and induced $P_6$. 
So $S(c)\cup S(d)\subseteq S(a)$, which is disjoint from $S(b)$, and therefore $b\sim c$ and $b\sim d$. For a point $r_b\in S(b)$, the path $r_b\hbox{-}b\hbox{-}c\hbox{-}q\hbox{-}a\hbox{-}q''$ is an induced $P_6$.

So $a\sim d$. Let $p$ be a private vertex of $q$. By~\eqref{eq:stab}, $p\not\sim a$ and $p\not\sim c$. Choose $r_a\in S(a)\setminus\{q\}$, $r_c\in S(c)\setminus\{q\}$, $r_b\in S(b)$, $r_d\in S(d)$. If $p\sim b$, then $r_b\hbox{-}b\hbox{-}p\hbox{-}q\hbox{-}a\hbox{-}r_a$ is an induced $P_6$. If $p\sim d$, then $r_d\hbox{-}d\hbox{-}p\hbox{-}q\hbox{-}c\hbox{-}r_c$ is an induced $P_6$. So $p\not\sim b,d$. If $b\sim d$, then $p\hbox{-}q\hbox{-}a\hbox{-}d\hbox{-}b\hbox{-}r_b$ is an induced $P_6$. So $b$ and $d$ have a common point $q'$ and $p\hbox{-}q\hbox{-}a\hbox{-}d\hbox{-}q'\hbox{-}b$ is an induced $P_6$.
\end{proof}

\begin{lemma}\label{lem:maxt}
Suppose $G[Y]$ is connected and $\Phi$ is connected with at least one edge. Let $T$ be the set of $t\in Y\setminus V(\Phi)$ with $V(\Phi)\subseteq\Sh(t)$, and choose $t\in T$ with $|\Adj(t)\cap V(\Phi)|$ maximum. If $x,y\in\Cov(t)$ and $xy\in E(\Phi)$, then $N(x)\cap Y\subseteq N(t)$ and $N(y)\cap Y\subseteq N(t)$.
\end{lemma}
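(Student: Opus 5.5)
The plan is to argue by contradiction and exploit the maximality of $|\Adj(t)\cap V(\Phi)|$. By symmetry between $x$ and $y$, it suffices to treat $x$. So suppose some $v\in N(x)\cap Y$ has $v\not\sim t$. Since $x,y\in\Cov(t)$, \cref{lem:kinds} gives $N(t)\cap X_x=\{r_x^1,r_x^2\}$ and $N(t)\cap X_y=\{r_y^1,r_y^2\}$. By~\eqref{eq:stab}, $S(v)\cap S(x)=\emptyset$. I will show that $v$ can replace $t$ in $T$ with strictly larger $|\Adj(\cdot)\cap V(\Phi)|$.

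\emph{Step 1: $v$ touches $X_y$.} Consider the path $v\hbox{-}x\hbox{-}r_x^1\hbox{-}t\hbox{-}r_y^i\hbox{-}y$. Its non-edges are justified as follows:
\begin{itemize}
\item $v\not\sim r_x^1$ by~\eqref{eq:stab}, and $v\not\sim t$ by assumption;
\item $x\not\sim t$ and $t\not\sim y$ because $t$ covers $x$ and $y$;
\item $x\not\sim y$ since $xy\in E(\Phi)$, and $x\not\sim r_y^i$, $r_x^1\not\sim y$ because $S(x)\cap S(y)=\emptyset$;
\item $r_x^1\not\sim r_y^i$ since $R$ is stable.
\end{itemize}
If $v$ were adjacent to neither $y$ nor $r_y^i$, this would be an induced $P_6$. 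Hence for each $i$, $v\sim y$ or $v\sim r_y^i$. In particular $v$ touches $X_y$.

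\emph{Step 2: $x\in\Adj(v)$.} The vertex $v$ also touches $X_x$, since $v\sim x$. We check $v\notin X_x\cup X_y$. First, $v\ne x$. Second, $v$ is not a private $p_x^i$, because $x\not\sim p_x^i$. Third, $v\notin X_y$: the cases to rule out are $v=y$ (excluded since $x\not\sim y$) and $v=p_y^i$ (excluded by \cref{lem:frames}). Thus $v$ connects $xy$. By \cref{lem:kinds} and $v\sim x$, $N(v)\cap X_x=\cl(x)$, so $x\in\Adj(v)$. On the other hand $x\notin\Adj(t)$, since $x\in\Cov(t)$ and $\Adj(t)\cap\Cov(t)=\emptyset$.

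\emph{Step 3: $v\in T$ with $\Adj(v)\supsetneq\Adj(t)\cap V(\Phi)$.} There are three things to check.
\begin{enumerate}
\item $V(\Phi)\subseteq\Sh(v)$. Since $x,y\in\Sh(v)$ and $\Phi$ is connected, I would propagate along $\Phi$ using the Claim inside the proof of \cref{lem:uct}, or directly \cref{lem:propagate} and \cref{lem:triangle}. Concretely, take a maximal connected $C\subseteq\Sh(v)$ containing $xy$. If some $u\notin\Sh(v)$ has a $\Phi$-neighbour in $C$, the Claim gives a vertex $t'$ whose shadow contains $C\cup\{u\}$, with $u$ $\Phi$-adjacent to all of $C$. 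I then compare with $t$, using $u\in\Sh(t)$, to derive an induced $P_6$ through $v$, $x$, $t$ and $X_u$.
\item Every $u\in\Adj(t)\cap V(\Phi)$ lies in $\Adj(v)$. Here $t$ is complete to $\cl(u)$ and $v\not\sim t$. If instead $u\in\Cov(v)$, I would build an induced $P_6$ of the form $p_u^1\hbox{-}t\hbox{-}u\hbox{-}r_u^1\hbox{-}v\hbox{-}x$, or a variant routed through $r_x^1$, with the non-edges coming from \cref{lem:frames} and~\eqref{eq:stab}.
\item $v\notin V(\Phi)$. If $v\in V(\Phi)$, it has a $\Phi$-neighbour $w$ lying in $\Sh(v)$ by item 1, and \cref{lem:shadow}(a) then gives $v\notin X_v$, a contradiction.
\end{enumerate}
Together with Step 2, these show $v\in T$ and $|\Adj(v)\cap V(\Phi)|>|\Adj(t)\cap V(\Phi)|$, contradicting the choice of $t$.

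I expect Step 3, item 1 (showing $V(\Phi)\subseteq\Sh(v)$) to be the main obstacle. The first attempt will be to mimic the inductive argument of \cref{lem:uct}, seeded at the edge $xy$, while using $t$ as a witness to force the newly reached vertices into $\Sh(v)$ rather than into the shadow of some other connecting vertex. If this fails, the fallback is a direct case analysis, according to whether $t$ sees or covers $u$, producing an explicit induced $P_6$ through $v$, $t$ and the frames $X_x$ and $X_u$.
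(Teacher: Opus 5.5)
\textbf{Item 2 of Step 3 fails as written.} Your overall plan is the paper's: show that a non-neighbour $v$ of $t$ in $N(x)\cap Y$ would itself lie in $T$ with strictly larger $|\Adj(\cdot)\cap V(\Phi)|$. Steps 1 and 2, and item 3 of Step 3, are exactly the paper's argument.

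The path $p_u^1\hbox{-}t\hbox{-}u\hbox{-}r_u^1\hbox{-}v\hbox{-}x$ is not induced. Since $p_u^1$ is the private vertex of $r_u^1$, we have $p_u^1\sim r_u^1$, so the first four vertices form a $4$-cycle. Moreover, \cref{lem:frames} says nothing about $x$ versus $X_u$ here. The vertices $u\in\Adj(t)$ and $x\in\Cov(t)$ need not be $\Phi$-adjacent, so the adjacencies between $x$ and $u$, $r_u^1$, $p_u^i$ are uncontrolled, and any path ending at $x$ is unjustified.

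The repair is to end at $r_x^1$ instead of $x$ and to use the other private vertex. If $u\in\Cov(v)$, then $r_x^1\hbox{-}t\hbox{-}p_u^2\hbox{-}r_u^2\hbox{-}v\hbox{-}r_u^1$ is an induced $P_6$:
\begin{itemize}
\item $r_x^1\notin S(u)$ because $u\sim t\sim r_x^1$ and \eqref{eq:stab}, so $r_x^1\not\sim p_u^2$;
\item $v\not\sim r_x^1$ because $v\sim x$;
\item $t\not\sim r_u^1,r_u^2$ because $t\sim u$;
\item $v\not\sim p_u^2$ because both see $r_u^2$;
\item the remaining non-edges come from stability of $R$ and privacy.
\end{itemize}

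\textbf{Item 1 is only a plan,} though in the right direction. Here is what you need to write.
\begin{itemize}
\item Part (i) of the Claim in the proof of \cref{lem:uct}, with $t$ replaced by $v$, makes $u$ $\Phi$-adjacent to $x$. The vertex $t'$ plays no role.
\item Since $v\sim x$ and $X_x,X_u$ are anticomplete, $v\notin X_u$.
\item $v$ has no neighbour in $X_u$. Otherwise it would connect $xu$, and \cref{lem:shadow}(b) would give $u\in\Sh(v)$.
\item Now $u\in\Sh(t)$ because $t\in T$. If $u\in\Cov(t)$, your Step 1 applied to the $\Phi$-edge $xu$ in place of $xy$ forces $v$ to touch $X_u$, a contradiction.
\item If $u\in\Adj(t)$, then $r_u^1\hbox{-}u\hbox{-}t\hbox{-}r_x^1\hbox{-}x\hbox{-}v$ is an induced $P_6$. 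This uses $xu\in E(\Phi)$ (disjoint types and $u\not\sim x$), \eqref{eq:stab}, and that $v$ is anticomplete to $X_u$.
\end{itemize}
With these two points supplied, your proof coincides with the paper's.
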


\begin{proof}
The set $T$ is nonempty by \cref{lem:uct}. Note that $t\not\sim x$ and $t\not\sim y$ by~\eqref{eq:stab}. We use the following observation.

\smallskip\noindent($\ast$) \emph{If $x',y'\in\Cov(t)$, $x'y'\in E(\Phi)$, and $w'\in N(x')\cap Y\setminus N(t)$, then $w'$ connects $x'y'$ and $x'\in\Adj(w')$.}

\smallskip\noindent Indeed, for $i\in\{1,2\}$, if $w'\not\sim y'$ and $r_{y'}^i\notin S(w')$, then $w'\hbox{-}x'\hbox{-}r_{x'}^1\hbox{-}t\hbox{-}r_{y'}^i\hbox{-}y'$ is an induced $P_6$. So $w'\sim y'$ or $r_{y'}^1,r_{y'}^2\in S(w')$. Since $w'\sim x'$ and $X_{x'},X_{y'}$ are anticomplete, $w'\notin X_{x'}\cup X_{y'}$. So $w'$ connects $x'y'$. Since $w'\sim x'$, \cref{lem:kinds} gives $x'\in\Adj(w')$.

\smallskip
Now suppose $w'\in N(x)\cap Y\setminus N(t)$. By ($\ast$) and Lemma \cref{lem:kinds}, $x,y\in\Sh(w')$ and $x\in\Adj(w')$. Let $C$ be the vertex set of the component of $\Phi[\Sh(w')\cap V(\Phi)]$ containing $x$. Suppose $C\ne V(\Phi)$. Since $\Phi$ is connected, some edge $mu$ of $\Phi$ has $m\in C$ and $u\notin C$, and then $u\notin\Sh(w')$. Part (i) of the claim in the proof of \cref{lem:uct}, applied with $t$ replaced by $w^\prime$, shows that $u$ is $\Phi$-adjacent to every vertex of $C$,
in particular to $x$. 
Since $X_u$ and $X_x$ are anticomplete and $w'\sim x$, $w'\notin X_u$.
Note that $w'$ has no neighbor in $X_u$, for otherwise $w'$ would connect $xu$ (it touches $X_x$ at $x$, and $w'\notin X_u$ as $w'\sim x$) and $u\in\Sh(w')$ by \ref{lem:shadow} (b). 
As $t\in T$, $u\in\Sh(t)$. If $u\in\Cov(t)$, then ($\ast$) applied to $x'=x$, $y'=u$ shows that $w'$ connects $xu$, a contradiction. If $u\in\Adj(t)$, then $r_u^1\hbox{-}u\hbox{-}t\hbox{-}r_x^1\hbox{-}x\hbox{-}w'$ is an induced $P_6$, a contradiction. Hence $V(\Phi)\subseteq\Sh(w')$. If $w'\in V(\Phi)$, it has a $\Phi$-neighbor $v\in\Sh(w')$ and \cref{lem:shadow} (a) gives $w'\notin X_{w'}$, which is a contradiction. So $w'\in T$.

Next let $z\in\Adj(t)\cap V(\Phi)$. Then $z\in\Sh(w')$. If $z\in\Cov(w')$, then $r_x^1\hbox{-}t\hbox{-}p_z^2\hbox{-}r_z^2\hbox{-}w'\hbox{-}r_z^1$ is an induced $P_6$ (here $r_z^2\ne r_x^1$ as $z\sim t$). So $z\in\Adj(w')$. Thus $\Adj(t)\cap V(\Phi)\subseteq\Adj(w')$ and $x\in\Adj(w')\setminus\Adj(t)$ as $t\nsim x$. This contradicts the choice of $t$.
\end{proof}

\begin{proof}[Proof of Theorem \ref{thm:stablestars}]

We use induction on $w=\omega(Y)$. If $w\le1$ then $Y$ is stable and the bound holds. Let $w\ge2$. Since $\chi(Y)$ is the maximum over the components of $G[Y]$, we may assume $G[Y]$ is connected, and we may replace $R$ by a minimal subset $R'$ with $(Y,R')$ a dominated pair; this keeps the stars stable. Let $Y_1=Y\setminus Y_{\ge2}$ be the set of vertices with one point. By \cref{thm:cg}, $\chi(Y_1)\le w^2$.

Let $U\subseteq V(\Phi)$ meet every edge of $\Phi$. Then $(Y_{\ge2}\setminus U,R)$ has no far pair. \cref{thm:nofar} (with $M\le1$) gives $\chi(Y_{\ge2}\setminus U)\le w^2+2w$. Hence
\begin{equation}\label{eq:split}
\chi(Y)\le 2w^2+2w+\chi(U).
\end{equation}
Since $5w^3-5(w-1)^3=15w^2-15w+5$ and $13w^2\ge17w$ for $w\ge2$, it suffices to find $U$ with $\chi(U)\le 5(w-1)^3+5$.

If $\Phi$ has no edge, take $U=\emptyset$.
So $\Phi$ has at least one edge.
Suppose first that $\Phi$ has $m\ge2$ components, with vertex sets $V_1,\dots,V_m$. 
Take $U=V(\Phi)$. Two edges in different components satisfy the hypothesis of \cref{lem:2k2}, and so $V_i$ is complete to $V_j$ for $i\ne j$. Hence $\chi(U)=\sum_i\chi(V_i)$ with $\omega_i=\omega(V_i)\ge1$, $\sum_i\omega_i\le w$ and $\omega_i\le w-1$. By induction $\chi(U)\le 5\sum_i\omega_i^3$. For integers $p\ge q\ge1$ we have $p^3+q^3\le (p+q-1)^3+1$, so $\sum_i\omega_i^3\le (w-m+1)^3+(m-1)\le(w-1)^3+1$.

Now suppose that $\Phi$ is connected. Choose $t$ as in \cref{lem:maxt}, let $W_t$ be the set of vertices $x\in\Cov(t)$ that have a $\Phi$-neighbor in $\Cov(t)$, and take $U=(\Adj(t)\cap V(\Phi))\cup W_t$. Every edge $ab$ of $\Phi$ has $a,b\in\Sh(t)$. So one of $a,b$ lies in $\Adj(t)$ or both lie in $W_t$. Hence, $U$ meets every edge. Since $\Adj(t)\subseteq N(t)$ and $\omega(N(t)\cap Y)\le w-1$, induction gives $\chi(\Adj(t)\cap V(\Phi))\le 5(w-1)^3$. By \cref{lem:maxt}, every $x\in W_t$ has $N(x)\cap Y\subseteq N(t)$, while $t$ is non-adjacent to every vertex of $W_t$. So $W_t$ is stable. Hence $\chi(U)\le 5(w-1)^3+1$. 
\end{proof}

\section{The overlap graph}\label{sec:overlap}

Let $(Y,R)$ be a dominated pair. Two vertices $y,y'\in Y$ \emph{overlap} if $y\sim y'$ and $S(y)\cap S(y')\ne\emptyset$. The \emph{overlap graph} $\Ov=\Ov(Y,R)$ has vertex set $Y$ and the overlapping pairs as edges. Note that $\Ov(Y',R)=\Ov(Y,R)[Y']$ for $Y'\subseteq Y$.

\begin{theorem}\label{thm:overlap}
Let $(Y,R)$ be a dominated pair with $Y\ne\emptyset$, $w=\omega(Y)$ and $M=M(Y,R)$. Then $\chi(\Ov(Y,R))\le 11w^7M$.
\end{theorem}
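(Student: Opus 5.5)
The plan is induction on $w$, peeling off a vertex of minimal type, in the spirit of \cref{thm:nofar} and \cref{thm:cap}, but now working in the overlap graph $\Ov$. Fix $z\in Y$ whose type $S(z)$ is minimal under inclusion. Every $\Ov$-neighbor of $z$ lies in $N(z)\cap Y$, so $\omega(N_{\Ov}(z))\le w-1$. The difficulty is that removing only $N[z]$ does not reduce the clique number of the rest. I would therefore not colour greedily. Instead I would split $Y$ into pieces: on the ``easy'' pieces I colour $G$ itself, which bounds $\Ov$ since $\Ov\subseteq G$; on the remaining piece I use induction on $w$ in $\Ov$.

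Concretely, fix $z$ and partition $Y=N_Y[z]\cup Q(z)\cup F(z)$.
\begin{enumerate}
\item By the minimal choice of $z$, \cref{lem:Q} with $X=Y$ gives $\chi(Q(z))\le 2wM$. Hence $Q(z)$ costs only $O(wM)$ colours in $G$.
\item For $N_Y(z)$, I split by whether the type meets $S(z)$. The vertices that do meet it lie in $\bigcup_{r\in S(z)}I_r$; these are exactly the $\Ov$-neighbours of $z$. Their $\Ov$-clique number is at most $w-1$, so they are handled by induction. The vertices of $N_Y(z)$ with types disjoint from $S(z)$ are not $\Ov$-adjacent to $z$; I would treat them together with $F(z)$.
\item The main set is $F'(z)$, the vertices whose type is disjoint from $S(z)$, whether adjacent to $z$ or not. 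On this set $\Ov$-edges still occur. I would iterate: choose another minimal-type vertex $z_2$ in the remainder, and so on. This produces a sequence $z_1,z_2,\dots$ with pairwise disjoint types, such that the $\Ov$-neighbourhoods $N_{\Ov}(z_i)$ and the sets $Q(z_i)$ exhaust $Y$.
\end{enumerate}
The obstacle is that this sequence could be long. I therefore need to control how the sets $N_{\Ov}(z_i)\cap F'(z_j)$ interact across different $i$.

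To make the sequence harmless, I would use $P_6$-freeness, and more specifically the stable-star theorem. Let $Z=\{z_1,z_2,\dots\}$. Distinct elements of $Z$ have disjoint types, so if two of them are adjacent they do not overlap. Then $(Z,R)$ restricted to the relevant points is a dominated pair in which $\Ov[Z]$ has no edges. The key move is to pass to an auxiliary pair with stable stars, so that \cref{thm:stablestars} applies. Colour each $y$ in a first-order piece by the index $i$ of the $z_i$ that absorbed it. Two pieces $i\ne j$ can then share an $\Ov$-edge only in a constrained way. Using \cref{lem:crossing}- and \cref{lem:capkey}-type $P_6$ arguments, I expect to show that the ``piece graph'' behaves like the graph on $Z$. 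That graph can be coloured with $5w^3$ colours by \cref{thm:stablestars}. Within each piece, we pay $2wM$ for $Q$ plus the inductive bound on $w-1$.

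Putting this together should give a recursion roughly of the form
\[
\chi_{\Ov}(w)\le 5w^3\bigl(\chi_{\Ov}(w-1)\text{-type term}\bigr)\cdots,
\]
but that is multiplicative and too weak. So I would instead aim for an additive recursion. The plan is to choose all the pieces at once, so that the induction is invoked only on $\bigcup_i N_{\Ov}(z_i)$. The claim to prove is that this union has $\Ov$-clique number at most $w-1$ after removing a set of $G$-chromatic number $O(w^6M)$. That would give $\chi(\Ov)\le \chi_{\Ov}(w-1)+O(w^6M)$, which sums to $11w^7M$.

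The main obstacle is establishing this uniform drop in clique number across all pieces simultaneously. That is where I expect the real $P_6$-free case analysis to happen. I would first attempt it by taking $Z$ to be a maximal set of vertices with pairwise disjoint minimal types, and analysing an $\Ov$-clique that meets $N_{\Ov}(z_i)$ for several indices $i$.
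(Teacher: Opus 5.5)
The local steps are fine: \cref{lem:Q} bounds $\chi(Q(z))$ for a minimal-type $z$, and induction applies to $N_{\Ov}(z)$. But the proposal stops exactly where the proof has to happen. The additive step is announced as ``the claim to prove'' and never established, and in the form you state it, it could not be fed to the induction anyway.

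The induction hypothesis bounds $\chi(\Ov(Y',R))$ by $11\,\omega(Y')^7M$, where $\omega(Y')$ is the clique number of $G[Y']$. So lowering the $\Ov$-clique number of $\bigcup_i N_{\Ov}(z_i)$ gains nothing, and your tools (\cref{lem:Q}, \cref{thm:stablestars}) are stated in terms of $G$-clique number. An additive recursion needs two things: a family of sets, each of $G$-clique number at most $w-1$, with no $\Ov$-edges between different members, so that one palette of $11(w-1)^7M$ colours serves them all; and one further palette of size $O(w^6M)$ for everything else.

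Your greedy sequence $z_1,z_2,\dots$ provides neither. The sets $N_{\Ov}(z_i)$ for different $i$ can be joined by $\Ov$-edges. For example, take $R=\{a,b,p\}$, $G[Y]$ the path $z_1\hbox{-}y\hbox{-}y'\hbox{-}z_2$, and types $\{a\},\{a,p\},\{b,p\},\{b\}$. Starting from $z_1$, your process puts $y\in N_{\Ov}(z_1)$ and $y'\in N_{\Ov}(z_2)$, and $y,y'$ overlap. Likewise nothing in your construction prevents $\Ov$-edges among the unboundedly many sets $Q_{X_i}(z_i)$, so the bound $2wM$ for each says nothing about their union. Applying \cref{thm:stablestars} to $Z$ colours only $Z$. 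The assertion that the ``piece graph'' behaves like $G[Z]$ is unsupported, and as you note it would give a multiplicative recursion in any case.

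The paper never isolates $N_{\Ov}(z)$. It builds a rooted forest whose top nodes are the $\Ov$-components of $Y$ with clique number $w$. In a node $K$ it picks a root $z_K$ with $|S(z_K)|$ \emph{maximum}, removes only $B_K=\{z_K\}\cup Q_K$, and recurses on the $\Ov$-components of $K\setminus B_K$ that still have clique number $w$. All other $\Ov$-components arising this way are small pieces. Being $\Ov$-components, they are pairwise $\Ov$-anticomplete, so the induction costs a single palette of $11(w-1)^7M$ colours.

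The real work is colouring $U=\bigcup_K B_K$, over a tree of unbounded depth, with at most $11w^6M$ colours:
\begin{itemize}
\item roots receive $c_{\mathrm{root}}$ from \cref{lem:roots}, using $w^3$ colours via the $(h,t)$ layering and greedy colouring of $P_4$-free graphs along branches;
\item each $Q_K$ is coloured via \cref{cor:capt}, which is needed precisely because $z_K$ need not have minimal type;
\item $\Ov$-edges between $Q_K$ and $Q_{K'}$ for nested nodes are separated by $\kappa_1(v)=c_{\mathrm{root}}(z_K)$ and, for non-twins, by a $w$-colouring of a comparability graph whose transitivity comes from \cref{lem:rootadj,lem:far,lem:class}.
\end{itemize}
The maximum-$|S|$ choice of root is what drives \cref{lem:tree}(b), \cref{lem:rootadj} and the transitivity in \cref{lem:class}. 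Your minimal-type choice gives no analogue of these inequalities. So what you defer as ``case analysis'' is in fact the missing idea: a decomposition into nested $\Ov$-components together with an ancestor-based colouring of the removed sets.
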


We first prove \cref{thm:stable} and \cref{cor:blob} from \cref{thm:overlap,thm:stablestars}.

\begin{proof}[Proof of \cref{thm:stable}]
Note that
$(Y,R)$ is a dominated pair. Color $\Ov(Y,R)$ properly with at most $11w^7M$ colors. If $C$ is a color class and $r\in R$, then $I_r\cap C$ is stable in $Y$, since two adjacent vertices of $I_r$ overlap. So $(C,R)$ is a dominated pair with stable stars, and $\chi(C)\le 5w^3$ by \cref{thm:stablestars}. Hence $\chi(Y)\le 55w^{10}M$.
\end{proof}

\begin{proof}[Proof of \cref{cor:blob}]
Let $B=N(A_1\cup\dots\cup A_k)$.
Since the $A_i$ are pairwise anticomplete, $N(A_i)\subseteq B$. Let $H$ be the graph of \cref{lem:contract} with $U=B$ so that $N_H(z_i)=N(A_i)$. Then $H$ is $P_6$-free. Since $(B,\{z_1,\dots,z_k\})$ is a dominated pair in $H$, \cref{thm:stable} gives $\chi(B)\le 55\,\omega(B)^{10}\max_i\chi(N(A_i))$.
\end{proof}

\subsection{The root tree}
For the rest of this section fix $(Y,R)$, $w$ and $M$ as in \cref{thm:overlap}. 
We use induction on $w$. 
An \emph{$\Ov$-component} of a set $Z\subseteq Y$ is the vertex set of a component of $\Ov[Z]$.

We build a rooted forest $\mathcal{T}$ whose \emph{nodes} are subsets of $Y$ with clique number $w$. The top nodes of $\mathcal{T}$ are the $\Ov$-components of $Y$ with clique number $w$. For a node $K$, choose a vertex $z_K\in K$ with $|S(z_K)|$ maximum among the vertices of $K$, called the \emph{root vertex} of $K$. Put
\[
T_K=S(z_K),\qquad Q_K=\{y\in K\setminus N[z_K]: S(y)\cap T_K\ne\emptyset\},\qquad B_K=\{z_K\}\cup Q_K.
\]
The children of $K$ are the $\Ov$-components of $K\setminus B_K$ with clique number $w$. The $\Ov$-components of $K\setminus B_K$ with smaller clique number and the $\Ov$-components of $Y$ with clique number less than $w$ are called \emph{small pieces}. Every vertex of $Y$ lies in exactly one set $B_K$ or in exactly one small piece. Let $U=\bigcup_{K\in \mathcal{T}} B_K$. A vertex $y\in Q_K$ is a \emph{twin} if $S(y)=T_K$, and a {\em non-twin} if $y\in Q_K$ and $S(y)\neq T_K$. 
When we say that a node $K'$ is \emph{below} a node $K$, we mean that $K'$ is a proper descendant of $K$.

\begin{lemma}\label{lem:tree}
The following holds.
\begin{enumerate}
\item No edge of $\Ov$ joins two different small pieces, and no edge of $\Ov$ joins $B_K$ to $B_{K'}$ unless $K=K'$ or one of $K,K'$ is below the other.
\item If $K'$ is below $K$ and $y\in K'$, then $|S(y)|\le|T_K|$, and either $y\sim z_K$ or $S(y)\cap T_K=\emptyset$.
\item If $K'$ is below $K$ and $T_K\cap T_{K'}\ne\emptyset$, then $z_K\sim z_{K'}$.
\end{enumerate}
\end{lemma}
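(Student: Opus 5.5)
The lemma has three parts, and I will prove them in order, since (b) uses the structure from (a) and (c) follows from (b).

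For (a), the plan is to read everything off the recursive definition of $\mathcal{T}$. The nodes at any fixed depth are pairwise disjoint, because each child of $K$ is an $\Ov$-component of $K\setminus B_K$ and the top nodes are $\Ov$-components of $Y$.

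First take two small pieces. If both are $\Ov$-components of the same set (of $Y$, or of $K\setminus B_K$ for one node $K$), then no $\Ov$-edge joins them. Now let $Z$ be the smallest set in the chain $Y\supseteq K_1\supseteq\cdots$ containing both pieces. Either both pieces lie in $Z$ but in different $\Ov$-components of the relevant set, or at least one of them is contained in a child (or top node) $K''$ that is itself an $\Ov$-component. In the second case, an $\Ov$-edge leaving $K''$ to a vertex outside $B$ at that level would contradict maximality of the component.

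The same argument handles $B_K$ versus $B_{K'}$ when neither node is below the other. Let $L$ be their last common ancestor, or $Y$ if there is none. Then $K$ and $K'$ sit inside different $\Ov$-components of $L\setminus B_L$, or of $Y$, and $B_K\cup B_{K'}$ avoids $B_L$. Hence no $\Ov$-edge joins them. I will phrase all of this as one claim: each node is an $\Ov$-component of its parent minus $B_{\text{parent}}$, so $\Ov$-edges out of a node go only into $B$-sets of its ancestors.

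For (b), let $K'$ be below $K$ and $y\in K'$. Then $y\in K\setminus B_K$, and $K'\subseteq K$ since descendants are subsets. Maximality of $|S(z_K)|$ within $K$ gives $|S(y)|\le|T_K|$. Since $y\ne z_K$ and $y\notin Q_K$, either $y\in N(z_K)$ or $S(y)\cap T_K=\emptyset$, by the definition of $Q_K$.

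For (c), apply (b) with $y=z_{K'}\in K'$. The set $S(z_{K'})=T_{K'}$ meets $T_K$ by hypothesis, so the second alternative of (b) fails. Hence $z_{K'}\sim z_K$.

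The only step needing care is the bookkeeping in (a): showing that an $\Ov$-edge leaving a node can only land in $B$-sets of ancestors. Formally this is an induction on depth, and I expect it to be routine.
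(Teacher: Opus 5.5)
Your proposal is correct and is essentially the paper's proof. Part (a) holds because non-nested nodes, or two small pieces, lie in different $\Ov$-components of $Y$ or of $K''\setminus B_{K''}$ for their last common ancestor $K''$; (b) holds because $y\in K\setminus B_K$ and $|T_K|$ is maximum in $K$; and (c) follows by applying (b) to $y=z_{K'}$. In (a), replace ``the chain'' for small pieces by the last-common-ancestor argument you already use for $B_K,B_{K'}$ (two small pieces need not lie on one branch), and note that the sets $B_K$ and the small pieces partition $Y$.
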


\begin{proof}
(a) If two such sets are not nested in the forest, then they lie in different $\Ov$-components of $Y$ or there is a node $K''$ such that they lie in different $\Ov$-components of $K''\setminus B_{K''}$.
(b) Since $K'\subseteq K\setminus B_K$, $y\in K$ and $y\notin B_K$.
(c) follows from (b) with $y=z_{K'}$.
\end{proof}

\begin{lemma}\label{lem:rootadj}
Let $K'$ be below $K$ and $y\in Q_{K'}$ be a non-twin with $y\sim z_K$. Then $z_K\sim z_{K'}$.
\end{lemma}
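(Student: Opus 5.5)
The plan is to argue by contradiction. Suppose $z_K\not\sim z_{K'}$; I will build an induced $P_6$ of the form
\[
r\hbox{-}z_K\hbox{-}y\hbox{-}q\hbox{-}z_{K'}\hbox{-}s
\]
with $r,q,s\in R$.

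First I collect the available information.
\begin{itemize}
\item Since $K'$ is below $K$ and $z_{K'}\in K'$, \cref{lem:tree}~(b) applied to $z_{K'}$ gives $T_{K'}\cap T_K=\emptyset$. This is exactly the contrapositive of \cref{lem:tree}~(c).
\item Since $y\in Q_{K'}$, we have $y\not\sim z_{K'}$ and $S(y)\cap T_{K'}\ne\emptyset$.
\item Since $y\in K'$ and $z_{K'}$ maximizes $|S(\cdot)|$ in $K'$, we have $|S(y)|\le |T_{K'}|$.
\item Since $y\in K'$ is below $K$, \cref{lem:tree}~(b) also gives $|S(y)|\le |T_K|$.
\end{itemize}

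Next I choose the three points.
\begin{itemize}
\item Let $q\in S(y)\cap T_{K'}$.
\item Let $s\in T_{K'}\setminus S(y)$. This exists because $y$ is a non-twin: $S(y)\ne T_{K'}$ and $|S(y)|\le|T_{K'}|$, so $T_{K'}\not\subseteq S(y)$.
\item Let $r\in T_K\setminus S(y)$. This exists because otherwise $T_K\subseteq S(y)$, and together with $|S(y)|\le |T_K|$ this forces $S(y)=T_K$. But $S(y)$ meets $T_{K'}$, which is disjoint from $T_K$, a contradiction.
\end{itemize}

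The path edges are as follows: $r\sim z_K$ since $r\in T_K$; $z_K\sim y$ by hypothesis; $y\sim q$ since $q\in S(y)$; and $q\sim z_{K'}$, $z_{K'}\sim s$ since $q,s\in T_{K'}$.

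The non-edges are as follows.
\begin{itemize}
\item $r,q,s$ are pairwise non-adjacent because $R$ is stable.
\item $r\not\sim y$ and $s\not\sim y$ by the choice of $r$ and $s$.
\item $r\not\sim z_{K'}$, $q\not\sim z_K$ and $s\not\sim z_K$ because $T_K\cap T_{K'}=\emptyset$.
\item $y\not\sim z_{K'}$ because $y\in Q_{K'}$.
\item $z_K\not\sim z_{K'}$ by assumption.
\end{itemize}

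So the path is an induced $P_6$, contradicting $P_6$-freeness. Hence $z_K\sim z_{K'}$.

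The main obstacle is guaranteeing that the end point $r$ avoids $S(y)$. This is where the size bound $|S(y)|\le |T_K|$ from \cref{lem:tree}~(b) is combined with the disjointness $T_K\cap T_{K'}=\emptyset$. The other choices are immediate from the definitions of $Q_{K'}$ and of a non-twin.
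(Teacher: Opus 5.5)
Your proof is correct and is essentially the paper's argument. The paper also gets $T_K\cap T_{K'}=\emptyset$ from \cref{lem:tree}, chooses the three points by the same size comparisons, and exhibits the same induced path $f\hbox{-}z_{K'}\hbox{-}a\hbox{-}y\hbox{-}z_K\hbox{-}c$, which is yours read backwards.
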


\begin{proof}
Suppose that $z_K\not\sim z_{K'}$. By \cref{lem:tree}(c), $T_K\cap T_{K'}=\emptyset$ . 
Choose $a\in S(y)\cap T_{K'}$.
Suppose that $T_{K'}\subseteq S(y)$. Then $|T_{K'}|\le |S(y)|$. By the choice of $z_{K'}$, $|S(y)|\le |T_{K'}|$. So 
$S(y)=T_{K'}$ and this contradicts $y$ is a non-twin.
So there is a vertex $f\in T_{K'}\setminus S(y)$.
By \cref{lem:tree}(b), $|S(y)|\le |T_K|$.
Since $a\in S(y)\cap T_{K'}$ and $T_K\cap T_{K'}=\emptyset$, we have $S(y)\setminus T_K\ne\emptyset$.
Thus $T_K\setminus S(y)\ne\emptyset$ and so there is a vertex $c\in T_K\setminus S(y)$.
Then $f\hbox{-}z_{K'}\hbox{-}a\hbox{-}y\hbox{-}z_K\hbox{-}c$ is an induced $P_6$.
\end{proof}

\subsection{Coloring the root vertices}

For a root vertex $z=z_K$, let $\mathrm{anc}(z)$ be the set of root vertices of the nodes that $K$ is below. Let $h(z)$ be the largest $k$ for which there are $z_1,\dots,z_k\in\mathrm{anc}(z)$ with $S(z)\subsetneq S(z_1)\subsetneq\dots\subsetneq S(z_k)$, and let $t(z)$ be the number of $z'\in\mathrm{anc}(z)$ with $S(z')=S(z)$.

\begin{lemma}\label{lem:roots}
There is a map $c_{\mathrm{root}}$ from the set of root vertices to a set of $w^3$ colors such that $c_{\mathrm{root}}(z)\ne c_{\mathrm{root}}(z')$ whenever $z'\in\mathrm{anc}(z)$ and $z\sim z'$.
\end{lemma}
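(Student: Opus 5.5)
We plan to color each root vertex $z$ by a triple $(h(z),t(z),g(z))$, where the first two coordinates are defined above and $g(z)\in\{1,\dots,w\}$ is assigned greedily from the top of $\mathcal{T}$ downwards. There are then at most $w^3$ colors, provided $h(z),t(z)\le w-1$.

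\textbf{Bounding $h$ and $t$.} Suppose $z_1,\dots,z_k\in\mathrm{anc}(z)$ satisfy $S(z)\subsetneq S(z_1)\subsetneq\dots\subsetneq S(z_k)$. Any two of $z,z_1,\dots,z_k$ have nested, hence intersecting, types, and one is an ancestor of the other. So by \cref{lem:tree}(c) they are pairwise adjacent. Thus $\{z,z_1,\dots,z_k\}$ is a clique in $Y$, and $k\le w-1$. The same argument applies to the ancestors $z'$ with $S(z')=S(z)$: together with $z$ they form a clique, so $t(z)\le w-1$.

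\textbf{Adjacent ancestors with nested types already get different colors.} Let $z'\in\mathrm{anc}(z)$ with $z\sim z'$. By \cref{lem:tree}(b), $|S(z)|\le|S(z')|$. If $S(z)=S(z')$, then $\mathrm{anc}(z')\subseteq\mathrm{anc}(z)$ and $z'$ is itself counted in $t(z)$, so $t(z)>t(z')$. If $S(z)\subsetneq S(z')$, then any chain witnessing $h(z')$ can be prefixed by $z'$, which gives $h(z)\ge h(z')+1$. In both cases $(h,t)$ already separates $z$ and $z'$. The remaining conflicts are pairs $z'\in\mathrm{anc}(z)$ with $z\sim z'$, $(h(z),t(z))=(h(z'),t(z'))$, and $S(z)\not\subseteq S(z')$. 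Here $|S(z)|\le|S(z')|$, so the two types either cross or are disjoint.

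\textbf{Greedy third coordinate and the main obstacle.} We process root vertices in order of depth and give $z$ the smallest $g(z)$ not used by its conflicting ancestors. This needs at most $w$ values, provided the following holds: the set $A(z)$ of ancestors of $z$ that are adjacent to $z$ and share its $(h,t)$ value is a clique. Then $A(z)\cup\{z\}$ is a clique, so $|A(z)|\le w-1$.

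This clique claim is the step we expect to be hardest. Take $z_1$ above $z_2$, both in $A(z)$, and suppose $z_1\not\sim z_2$. By \cref{lem:tree}(c), $T_1\cap T_2=\emptyset$. By \cref{lem:tree}(b), applied to $z\in K_2$ below $K_1$ and using $z\sim z_1$, we learn nothing directly about $S(z)\cap T_1$. We would therefore look for an induced $P_6$ of the form $c\hbox{-}z_1\hbox{-}z\hbox{-}z_2\hbox{-}f\hbox{-}\cdot$ or $c\hbox{-}z_1\hbox{-}a\hbox{-}z\hbox{-}z_2\hbox{-}f$, using points $c\in T_1\setminus S(z)$ and $f\in T_2\setminus S(z)$. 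These points exist because $S(z)\not\subseteq T_i$ and $|S(z)|\le|T_i|$. The argument would follow the pattern of \cref{lem:rootadj}, using that $R$ is stable and that $z_2\notin B_{K_1}$.

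If the full clique claim fails, the fallback is to prove it only among ancestors in $A(z)$ with equal $h$ and $t$. That is exactly why those coordinates are included: equal $h$ and $t$ should forbid the nested-type situations that would otherwise allow an induced $P_6$ to be avoided.
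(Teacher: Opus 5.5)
Your bounds $h(z),t(z)\le w-1$ are correct and agree with the paper. So is your observation that $(h,t)$ already separates $z$ from any adjacent ancestor whose type contains $S(z)$; consequently, root vertices on one branch with equal $(h,t)$ have pairwise incomparable types.

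The gap is the claim your greedy count rests on: that the set $A(z)$ of adjacent ancestors with the same $(h,t)$ is a clique. This claim is false, so no induced $P_6$ of the shapes you list can be found.

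Here is a counterexample.
\begin{itemize}
\item Take points $c_1,c_2,a_1,f_1,f_2,a_2,s$ and $Y=\{z_1,z_2,z,y\}$ with $S(z_1)=\{c_1,c_2,a_1\}$, $S(z_2)=\{f_1,f_2,a_2\}$, $S(z)=\{a_1,a_2,s\}$, $S(y)=\{s\}$.
\item The only edges inside $Y$ are $zz_1$, $zz_2$, $zy$.
\item Deleting $z$ leaves two stars (centred at $z_1$ and $z_2$) and the edge $ys$. An induced path through $z$ meets at most two of these components, in at most two vertices each. So every induced path has at most five vertices, the graph is $P_6$-free, and $w=2$.
\item $\Ov$ is connected. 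Choosing $z_1$ as root of $K_1=Y$ is legal, since type size $3$ is maximum. This gives $B_{K_1}=\{z_1\}$ and the child $K_2=\{z,z_2,y\}$.
\item Choosing $z_2$ as root of $K_2$ gives $B_{K_2}=\{z_2\}$ and the child $K=\{z,y\}$, with root $z$.
\end{itemize}
All three roots have $(h,t)=(0,0)$, yet $A(z)=\{z_1,z_2\}$ with $z_1\not\sim z_2$, so $|A(z)|=2>w-1$. Your fallback does not help either, since $A(z)$ is already restricted to equal $(h,t)$.

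What is true, and what the paper proves, is weaker but enough. Fix a branch and a value $(h_0,t_0)$, and let $L$ be the set of root vertices on that branch with $(h,t)=(h_0,t_0)$. Then $G[L]$ is $P_4$-free. Suppose $a\hbox{-}b\hbox{-}c\hbox{-}d$ were an induced $P_4$ in $L$. The non-adjacent pairs $ac,ad,bd$ would have disjoint types by \cref{lem:tree}(c). Incomparability gives $f\in S(a)\setminus S(b)$ and $g\in S(d)\setminus S(c)$, and then $f\hbox{-}a\hbox{-}b\hbox{-}c\hbox{-}d\hbox{-}g$ is an induced $P_6$.

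Your third coordinate, restricted to $L$, is exactly the greedy coloring of $G[L]$ in order of depth. By Chv\'atal's theorem (\cref{lem:chvatal}), every ordering of a $P_4$-free graph is perfect, so greedy uses at most $\omega(L)\le w$ colors. In the example above, greedy gives $z_1$ and $z_2$ the same color and $z$ a second one.

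The missing idea is therefore to bound the number of distinct colors appearing on $A(z)$, via perfect orderings of $P_4$-free graphs, rather than the size of $A(z)$.
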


\begin{proof}
A chain as in the definition of $h(z)$, together with $z$, consists of root vertices of nested nodes whose types all contain $S(z)$. By \cref{lem:tree} (c), they are pairwise adjacent, and so $h(z)\le w-1$. Similarly, $t(z)\le w-1$.

Fix a maximal chain of nodes $K_0\supset K_1\supset\cdots$ of $\mathcal{T}$ (called a \emph{branch}) and a value $(h_0,t_0)$. 
Let $L$ be the set of root vertices on the branch with $(h,t)=(h_0,t_0)$. 
Then Two vertices of $L$ have incomparable types. Moreover $G[L]$ is $P_4$-free.
Suppose not and let $a\hbox{-}b\hbox{-}c\hbox{-}d$ be an induced $P_4$ in $L$. By \cref{lem:tree} (c), the non-adjacent pairs $ac$, $ad$, $bd$ have disjoint types. Choose $f\in S(a)\setminus S(b)$ and $g\in S(d)\setminus S(c)$. Then $f\hbox{-}a\hbox{-}b\hbox{-}c\hbox{-}d\hbox{-}g$ is an induced $P_6$.

Now process the root vertices in order of depth, and let $g(z)$ be the least integer $j\ge0$ such that no $z'\in\mathrm{anc}(z)$ with $(h(z'),t(z'))=(h(z),t(z))$ and $z'\sim z$ has $g(z')=j$. On each branch, $g$ restricted to $L$ is the greedy coloring of the $P_4$-free graph $G[L]$ in order of depth. So $g(z)\le\omega(L)-1\le w-1$ by \cref{lem:chvatal}. It follows that $c_{\mathrm{root}}(z)=(h(z),t(z),g(z))$ is the desired map.
\end{proof}

\subsection{Non-neighbors of a vertex with disjoint types}

\begin{lemma}\label{lem:far}
Let $z\in Y$, $T=S(z)$, $v,v'\in F(z)$ and $x\in Q(z)$ with $T\not\subseteq S(x)$.
\begin{enumerate}
\item If $x$ and $v$ have a common point, then $x\sim v$.
\item If $x\sim v$, then $S(v)\subseteq S(x)$.
\item If $x\sim v$ and $v\sim v'$, then $x\sim v'$.
\end{enumerate}
\end{lemma}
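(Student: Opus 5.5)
All three parts follow one pattern. Fix a point $a\in S(x)\cap T$, which exists because $x\in Q(z)$, and a point $b\in T\setminus S(x)$, which exists because $T\not\subseteq S(x)$. Then $a\hbox{-}z\hbox{-}b$ is an induced path, since $R$ is stable, and $x\sim a$, $x\not\sim b$, $x\not\sim z$. So $x\hbox{-}a\hbox{-}z\hbox{-}b$ is an induced path hanging off $x$. In each part, assuming the conclusion fails, I will prepend two more vertices on the $x$-side to obtain an induced $P_6$ in $G[Y\cup R]$.

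The non-edges will come from a few standard facts. Vertices of $F(z)$ are non-adjacent to $z$ and have types disjoint from $T$, so they miss $a$ and $b$. A point outside $T$ is non-adjacent to $z$. Points are pairwise non-adjacent.

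\emph{Part (a).} Suppose $q\in S(x)\cap S(v)$ and $x\not\sim v$. Then $q\notin T$ since $S(v)\cap T=\emptyset$. I take the path $v\hbox{-}q\hbox{-}x\hbox{-}a\hbox{-}z\hbox{-}b$. Its non-edges hold as follows.
\begin{itemize}
\item $v$ misses $x$ by assumption, and misses $a,z,b$ because $v\in F(z)$.
\item $q$ misses $a,b$ by stability of $R$, and misses $z$ since $q\notin T$.
\item $x$ misses $z$ and $b$ as noted above.
\end{itemize}
So this path is an induced $P_6$, a contradiction.

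\emph{Part (b).} Suppose $x\sim v$ and there is $s\in S(v)\setminus S(x)$. Then $s\notin T$. I take $s\hbox{-}v\hbox{-}x\hbox{-}a\hbox{-}z\hbox{-}b$. The point $s$ misses $x$ by choice, misses $z$ since $s\notin T$, and misses $a,b$ by stability. The remaining non-edges are as in (a). This is an induced $P_6$, a contradiction.

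\emph{Part (c).} Suppose $x\sim v$, $v\sim v'$ and $x\not\sim v'$; note $v'\ne x$ since $F(z)\cap Q(z)=\emptyset$. I take $v'\hbox{-}v\hbox{-}x\hbox{-}a\hbox{-}z\hbox{-}b$. Here $v'$ misses $x$ by assumption, and misses $a,z,b$ because $v'\in F(z)$. The other non-edges are as before, giving an induced $P_6$. Part (a) is not needed here.

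I see no real obstacle. The only care needed is checking that each endpoint added on the $x$-side sees nothing in $\{a,z,b\}$. This is exactly what the hypotheses $S(v)\cap T=\emptyset$ and $T\not\subseteq S(x)$ guarantee: the first makes $b$ and $a$ invisible from the $v$-side, and the second provides $b$.
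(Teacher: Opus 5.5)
Your proof is correct and is the same as the paper's. The paper fixes $c\in T\setminus S(x)$ and $a\in S(x)\cap T$, and in each part uses the same three induced $P_6$'s that you do, written in reverse order ($c\hbox{-}z\hbox{-}a\hbox{-}x\hbox{-}\cdots$).
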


\begin{proof}
Fix $c\in T\setminus S(x)$ and $a\in S(x)\cap T$. For (a), if $p$ is a common point and $x\not\sim v$, then $c\hbox{-}z\hbox{-}a\hbox{-}x\hbox{-}p\hbox{-}v$ is an induced $P_6$ (note that $p\notin T$ as $S(v)\cap T=\emptyset$). 
For (b), if $q\in S(v)\setminus S(x)$, then $c\hbox{-}z\hbox{-}a\hbox{-}x\hbox{-}v\hbox{-}q$ is an induced $P_6$. 
For (c), if $x\not\sim v'$, then $c\hbox{-}z\hbox{-}a\hbox{-}x\hbox{-}v\hbox{-}v'$ is an induced $P_6$.
\end{proof}

\subsection{Proof of \texorpdfstring{\cref{thm:overlap}}{Theorem 5.1}}
For $v\in Q_K$ put $\kappa_1(v)=c_{\mathrm{root}}(z_K)$.

\begin{lemma}\label{lem:class}
Let $x\in Q_K$ and $u\in Q_{K'}$ be non-twins with $\kappa_1(x)=\kappa_1(u)$, where $K'$ is below $K$. Then $u\in F(z_K)$. Consequently, if $x,y,u$ are non-twins with the same value of $\kappa_1$ lying in nodes $K,K',K''$ with $K''$ below $K'$ and $K'$ below $K$ and $xy$ and $yu$ are edges of $\Ov$, then $xu$ is an edge of $\Ov$.
\end{lemma}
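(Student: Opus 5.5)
The plan is to reduce both parts to \cref{lem:tree}, \cref{lem:rootadj}, \cref{lem:roots} and \cref{lem:far}.

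\emph{First part.} Since $K'$ is below $K$, we have $u\in K'\subseteq K\setminus B_K$, so $u\ne z_K$. By \cref{lem:tree}(b), either $u\sim z_K$ or $S(u)\cap T_K=\emptyset$.

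Suppose $u\sim z_K$. As $u\in Q_{K'}$ is a non-twin, \cref{lem:rootadj} gives $z_K\sim z_{K'}$. Now $z_K\in\mathrm{anc}(z_{K'})$, and \cref{lem:roots} forces $c_{\mathrm{root}}(z_K)\ne c_{\mathrm{root}}(z_{K'})$. This contradicts
\[
c_{\mathrm{root}}(z_K)=\kappa_1(x)=\kappa_1(u)=c_{\mathrm{root}}(z_{K'}).
\]
Hence $u\not\sim z_K$, and therefore $S(u)\cap T_K=\emptyset$. Together with $u\ne z_K$, this says exactly that $u\in F(z_K)$.

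\emph{Consequence.} Apply the first part to the pairs $(x,y)$ (nodes $K,K'$) and $(x,u)$ (nodes $K,K''$; note $K''$ is below $K$). This gives $y,u\in F(z_K)$.

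Next, $x\in Q_K\subseteq Q(z_K)$. By the choice of $z_K$ we have $|S(x)|\le|T_K|$, and $S(x)\ne T_K$ since $x$ is a non-twin. Hence $T_K\not\subseteq S(x)$, so \cref{lem:far} applies with $z=z_K$, $v=y$ and $v'=u$.

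Since $xy$ and $yu$ are $\Ov$-edges, we have $x\sim y$ and $y\sim u$. \cref{lem:far}(c) then gives $x\sim u$, and \cref{lem:far}(b) gives $S(y)\subseteq S(x)$. Because $y$ and $u$ overlap, they share a point $p\in S(y)\cap S(u)$. Then $p\in S(x)\cap S(u)$, so $x$ and $u$ overlap, and $xu$ is an edge of $\Ov$.

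The only delicate step is checking the hypothesis $T_K\not\subseteq S(x)$ of \cref{lem:far}. It follows from the maximality of $|T_K|$ together with the non-twin condition. Everything else is direct bookkeeping.
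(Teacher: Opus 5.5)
Your proof is correct and follows essentially the same route as the paper: you exclude $u\sim z_K$ via \cref{lem:rootadj} and \cref{lem:roots}, conclude with \cref{lem:tree}(b), and then apply \cref{lem:far} to $z_K$ using $T_K\not\subseteq S(x)$. The only cosmetic difference is at the end. The paper applies \cref{lem:far}(b) to $u$ (after getting $x\sim u$) to obtain $S(u)\subseteq S(x)$. You apply it to $y$ and combine it with a common point of $y$ and $u$; both work.
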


\begin{proof}
If $u\sim z_K$, then $z_K\sim z_{K'}$ by \cref{lem:rootadj}. By \cref{lem:roots}, $c_{\mathrm{root}}(z_K)\ne c_{\mathrm{root}}(z_{K'})$, a contradiction. So $u\not\sim z_K$. By \cref{lem:tree} (b), $S(u)\cap T_K=\emptyset$, that is, $u\in F(z_K)$. 
For the second statement, $y,u\in F(z_K)$ by the first statement. Since $x$ is a non-twin and $|S(x)|\le|T_K|$, we have $T_K\not\subseteq S(x)$. Applying \cref{lem:far} to $z=z_K$ and $x$, we have   $x \sim u$ (by (c)) and  $S(u)\subseteq S(x)$ (by (b)). So $x$ and $u$ overlap.
\end{proof}

Now we color $\Ov$. The small pieces are dominated pairs with clique number at most $w-1$ and $M(\cdot,R)\le M$. By inductive hypothesis, each has an overlap coloring with at most $11(w-1)^7M$ colors, and by \cref{lem:tree} (a) one palette serves for all of them. We color the rest vertices $U$ of $\Ov$ with a disjoint palette as follows.
\begin{itemize}
\item A root vertex $z$ gets the color $(\mathsf{root},c_{\mathrm{root}}(z))$.
\item For each node $K$ fix a proper coloring $\rho$ of $G[Q_K]$ with at most $(w^2+4w)M$ colors: it exists by \cref{cor:capt} applied to $(K,R)$ and $t=z_K$.
\item For each value $k$ of $\kappa_1$, let $\Gamma_k$ be the graph on the non-twins $v$ with $\kappa_1(v)=k$ whose edges are the edges of $\Ov$ between vertices in different nodes. By \cref{lem:tree} (a), the nodes of the ends of such an edge are nested. If we orient the edge towards the deeper node, this orientation is transitive by \cref{lem:class}. So $\Gamma_k$ is a comparability graph, and its cliques are cliques of $G$. Hence $\Gamma_k$ has a proper coloring $\kappa_2$ with at most $w$ colors.
\item A non-twin $v\in Q_K$ gets the color $(\mathsf{non},\rho(v),\kappa_1(v),\kappa_2(v))$, and a twin $v\in Q_K$ gets the color $(\mathsf{twin},\rho(v),\kappa_1(v))$.
\end{itemize}
We check that this coloring of $U$ is proper on $\Ov$. Edges between different kinds of vertices get different first coordinates. An edge inside one $Q_K$ is colored properly by $\rho$. By \cref{lem:tree} (a), every other edge joins two vertices in nested nodes $K\ne K'$. For two root vertices, \cref{lem:roots} applies. Two twins $y\in Q_K$, $u\in Q_{K'}$ that overlap share a point of $T_K\cap T_{K'}$. So $z_K\sim z_{K'}$ by \cref{lem:tree} (c) and $\kappa_1(y)\ne\kappa_1(u)$. Two non-twins with the same $\kappa_1$ are adjacent in some $\Gamma_k$ and are separated by $\kappa_2$.

The number of colors used on $U$ is at most
\[
w^3+(w^2+4w)M\cdot w^3\cdot w+(w^2+4w)M\cdot w^3\le w^3M+10w^6M\le 11w^6M,
\]
using $M\ge1$. So $\chi(\Ov)\le 11(w-1)^7M+11w^6M\le 11w^7M$. This proves \cref{thm:overlap}. \qed

\section{Proof of \texorpdfstring{\cref{thm:main}}{Theorem 1.1}}\label{sec:main}

\subsection{Tight graphs}
Throughout this section let
\[
d=45,\qquad \delta=\tfrac14,\qquad c=\frac{d}{\log(1/(1-\delta))}=\frac{45}{\log(4/3)},\qquad f(x)=x^{c\log x}\ (x\ge1).
\]
For a graph $G$ with $\omega(G)=W$, let $\nu=W^{-d}\chi(G)$. We say that $G$ is \emph{tight} if every $X\subseteq V(G)$ with $\chi(X)\ge\nu$ satisfies $\omega(X)>(1-\delta)W$. A set $X$ with $\chi(X)<\nu$ is \emph{negligible}.

\begin{lemma}\label{lem:tight}
If \cref{thm:main} is false, then some $P_6$-free tight graph $G$ has $\omega(G)\ge3$.
\end{lemma}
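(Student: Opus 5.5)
We argue by taking a minimal counterexample and showing it is tight. Suppose \cref{thm:main} fails, and among all $P_6$-free graphs $G$ with $\chi(G)>f(\omega(G))$ choose one with $W=\omega(G)$ minimum.

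We first check that $W\ge3$. If $W\le 1$ then $\chi(G)\le1=f(W)$. If $W=2$, \cref{lem:gyarfas} gives $\chi(G)\le 5^{1}=5$, while $f(2)=2^{c}$ with $c=45/\log(4/3)>100$, so $G$ is no counterexample. Hence $W\ge3$. (For small values of $W$ one could equally use \cref{lem:gyarfas} directly, since $5^{W-1}\le W^{c\log W}$ for all $W\ge2$.)

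Next we show $G$ is tight. Let $\nu=W^{-d}\chi(G)$, and suppose some $X\subseteq V(G)$ has $\chi(X)\ge\nu$ but $\omega(X)\le(1-\delta)W$. Put $W'=\omega(X)<W$. By the minimality of $W$, the graph $G[X]$ (which is $P_6$-free) satisfies $\chi(X)\le f(W')\le f((1-\delta)W)$, using that $f$ is increasing on $[1,\infty)$. If $W'=0$ the bound still holds trivially, since then $X$ is empty and $\chi(X)=0<\nu$ because $\chi(G)>0$. Therefore
\[
\chi(G)\le W^{d}\chi(X)\le W^{d} f\bigl((1-\delta)W\bigr).
\]

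It remains to verify $W^{d} f((1-\delta)W)\le f(W)$, which contradicts $\chi(G)>f(W)$. Write $L=\log W$ and $\lambda=\log(1/(1-\delta))$, so that $c=d/\lambda$ and $\log((1-\delta)W)=L-\lambda$. Taking logarithms, we need
\[
dL+c(L-\lambda)^2\le cL^2 ,
\]
that is, $dL\le 2c\lambda L-c\lambda^2 = 2dL-d\lambda$, which is equivalent to $L\ge\lambda$. This holds because $\lambda=\log(4/3)<1\le L$. One subtlety is that if $(1-\delta)W<1$ the expression $f$ is not defined there; but then $W'=0$, and we handled that case above.

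The only delicate step is this inequality. It is exactly why $c$ is chosen as $d/\log(1/(1-\delta))$.
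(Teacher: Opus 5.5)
Your proof is correct and follows the paper's argument: take a counterexample with $\omega(G)=W$ minimum, use \cref{lem:gyarfas} to rule out $W=2$, and use $\log f(W)-\log f((1-\delta)W)=d(2\log W-\lambda)\ge d\log W$ to show that every $X$ with $\omega(X)\le(1-\delta)W$ is negligible. The one slip is your parenthetical claim that $5^{W-1}\le W^{c\log W}$ for all $W\ge2$, which is false for large $W$ (it already fails at $W=2^{13}$; if it held for all $W$, the main theorem would follow from Gy\'arf\'as' bound alone), but you never use it, so the proof is unaffected.
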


\begin{proof}
Let $G$ be a $P_6$-free graph with $\chi(G)>f(\omega(G))$ and $W=\omega(G)$ minimum. 
By \cref{lem:gyarfas}, $\chi\le5\le 2^c=f(2)$, we have $W\ge3$. Let $L=\log(1/(1-\delta))\le\log W$. Then
\[
\log f(W)-\log f((1-\delta)W)=c\bigl(\log^2W-(\log W-L)^2\bigr)=cL(2\log W-L)\ge cL\log W=d\log W.
\]
If $\omega(X)\le(1-\delta)W$, then by the minimality of $W$ and the monotonicity of $f$, $\chi(X)\le f((1-\delta)W)\le W^{-d}f(W)<W^{-d}\chi(G)$. So $G$ is tight.
\end{proof}

For the rest of this section let $G$ be a $P_6$-free tight graph with $\omega(G)=W\ge3$, and write $\chi=\chi(G)$. Then the following holds.
\begin{itemize}
\item Every $X$ with $\omega(X)\le(1-\delta)W$ is negligible. In particular, a single vertex is negligible and so $\chi>W^d$.
\item If $\chi(X)\ge\nu$, then $\omega(\CN(X))\le W-\omega(X)<\delta W$ and so $\CN(X)$ is negligible.
\item By \cref{cor:blob} and $55\,W^{10}\le W^{16}$, for pairwise anticomplete connected sets $A_1,\dots,A_k$,
\begin{equation}\label{eq:blob16}
\chi(N(A_1\cup\dots\cup A_k))\le W^{16}\max_i\chi(N(A_i)).
\end{equation}
Similarly, \cref{thm:stable} holds with the factor $W^{16}$ in every induced subgraph of $G$.
\end{itemize}

\subsection{Cutting off a piece}
A \emph{tail} of a vertex $v$ with respect to a set $X$ is an induced path $v\hbox{-}b\hbox{-}a$ with $a,b\notin X$ and $\{a,b\}$ anticomplete to $X$.

\begin{lemma}\label{lem:endpoint}
Let $X$ be connected and co-connected with $|X|\ge2$, and $v\notin X$ be mixed on $X$ and have a tail with respect to $X$. Then the maximal strong modules of $G[X]$ in which $v$ has a non-neighbor form a stable set of the quotient graph.
\end{lemma}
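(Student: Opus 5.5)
The plan is to argue by contradiction, reducing everything to one forbidden configuration. That configuration is an induced path $x\hbox{-}y\hbox{-}z$ in $G[X]$ with $v\sim x$, $v\not\sim y$ and $v\not\sim z$.

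First, such a path cannot exist. Let $v\hbox{-}b\hbox{-}a$ be the tail. Then $a\hbox{-}b\hbox{-}v\hbox{-}x\hbox{-}y\hbox{-}z$ would be an induced $P_6$, because:
\begin{itemize}
\item $\{a,b\}$ is anticomplete to $X$;
\item $a\not\sim v$, since the tail is an induced path;
\item $v$ is non-adjacent to $y$ and $z$;
\item $x\hbox{-}y\hbox{-}z$ is induced.
\end{itemize}
So it suffices to show that if the conclusion fails, such a path appears.

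Suppose two maximal strong modules $M_1,M_2$ of $G[X]$ are adjacent in the quotient graph, and that $v$ has a non-neighbor $m_1\in M_1$ and a non-neighbor $m_2\in M_2$. By \cref{lem:gallai}(c), $M_1$ is complete to $M_2$, so $m_1\sim m_2$. Let $Z=X\setminus N(v)$, and let $C$ be the vertex set of the component of $G[Z]$ containing the edge $m_1m_2$.

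Now consider a neighbor $x\in X\cap N(v)$ of $v$. If $x$ is mixed on $C$, then since $G[C]$ is connected there is an edge $yz$ of $G[C]$ with $x\sim y$ and $x\not\sim z$. Then $x\hbox{-}y\hbox{-}z$ is the forbidden induced path. Hence every vertex of $X\cap N(v)$ is complete or anticomplete to $C$. Every vertex of $Z\setminus C$ is anticomplete to $C$, because $C$ is a component of $G[Z]$. So $C$ is a module of $G[X]$.

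Moreover $C\ne X$, since $v$ is mixed on $X$ and therefore has a neighbor in $X$, which lies outside $C$. By the last assertion of \cref{lem:gallai}(c), $C$ is then contained in a single maximal strong module. This contradicts $m_1\in M_1$ and $m_2\in M_2$ lying in different maximal strong modules, which proves the lemma.

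The only step needing care is recognizing that the non-neighbors of $v$ should be grouped into components of $G[Z]$, so that the module property of $C$ follows from the absence of the forbidden path. After that, \cref{lem:gallai}(c) finishes the argument directly.
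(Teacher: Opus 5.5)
Your proof is correct and takes essentially the same route as the paper. An edge $yz$ of a component of $G[X\setminus N(v)]$ on which some neighbor $x$ of $v$ is mixed would give the induced $P_6$ $a\hbox{-}b\hbox{-}v\hbox{-}x\hbox{-}y\hbox{-}z$, so that component is a proper module of $G[X]$ and lies in one maximal strong module by \cref{lem:gallai}(c). The only difference is cosmetic: you apply this to the single component containing $m_1m_2$ inside a contradiction argument, whereas the paper first shows that every component of $G[X\setminus N(v)]$ is such a module.
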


\begin{proof}
Let $v\hbox{-}b\hbox{-}a$ be a tail and let $D$ be a component of $G[X\setminus N(v)]$. If some $x\in X\cap N(v)$ is mixed on $D$, choose an edge $yz$ of $G[D]$ with $x\sim y$, $x\not\sim z$, and then $a\hbox{-}b\hbox{-}v\hbox{-}x\hbox{-}y\hbox{-}z$ is an induced $P_6$. So $D$ is a module of $G[X]$. Since $v$ has at least one neighbor in $X$, $D\ne X$. By \cref{lem:gallai} (c), $D$ lies in a maximal strong module. If $v$ had non-neighbors $u\in M_1$ and $u'\in M_2$ for adjacent maximal strong modules $M_1,M_2$, then $u\sim u'$. So $u,u'$ lie in one component of $G[X\setminus N(v)]$ and hence in one maximal strong module, a contradiction.
\end{proof}

\begin{lemma}[Cut-off lemma]\label{lem:cut}
Let $d_1\le d-3$, and $P$ be a connected set with $\chi(P)\ge W^{-d_1}\chi$. Then there is a strong module $X$ of $G[P]$ such that
\begin{enumerate}
\item $X$ is connected and co-connected, $\omega(X)>(1-\delta)W$, and $\chi(X)\ge\chi(P)-\tfrac12W^{2-d}\chi$;
\item there are cliques $K_1,\dots,K_m\subseteq X$ with $m\le W$ and $|K_i|>(\frac12-\delta)W$ such that every vertex $v\notin X$ that is mixed on $X$ and has a tail with respect to $X$ is complete to some $K_i$;
\item each $\CN(K_i)$ in (b) is negligible.
\end{enumerate}
\end{lemma}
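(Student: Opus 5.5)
We will find $X$ by descending the modular decomposition tree of $G[P]$. Set $X_0=P$; it is connected, and if it is also co-connected we stop. If $X_i$ is connected but not co-connected, \cref{lem:gallai}(b) splits it into pairwise complete maximal strong modules. If two of them were non-negligible, each would have clique number $>(1-\delta)W$ by tightness, and their union would have clique number $>2(1-\delta)W>W$, which is impossible. So at most one child is non-negligible. Moreover, the union of the other children lies in $\CN$ of that child, which is negligible by the second bullet. We pass to that child and then to one of its components. Components are pairwise anticomplete, so we can take a component of maximum chromatic number, losing nothing; this component is a strong module. Each complement step loses less than $\nu=W^{-d}\chi$, and each such step strictly lowers the clique number. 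Hence at most $W$ complement steps occur, and the total loss is at most $W\nu\le \tfrac12W^{2-d}\chi$. Since $\chi(P)\ge W^{-d_1}\chi\ge W^{3-d}\chi$, the set we keep still has chromatic number at least $\nu$. The descent ends at a strong module $X$ that is connected and co-connected with $\chi(X)\ge\chi(P)-\tfrac12 W^{2-d}\chi$. Tightness then gives $\omega(X)>(1-\delta)W$. (If a single vertex is reached we contradict $\chi(X)\ge\nu>1$.) This proves (a).

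For (b), let $M_1,\dots,M_s$ be the maximal strong modules of $G[X]$ and $H$ the quotient graph, which is connected by \cref{lem:gallai}(c). Take $v\notin X$ mixed on $X$ with a tail. By \cref{lem:endpoint}, the set $I_v$ of modules containing a non-neighbor of $v$ is stable in $H$. Hence $v$ is complete to every module outside $I_v$, and in particular to every module adjacent in $H$ to a module of $I_v$; $I_v$ is nonempty because $v$ is mixed. Weight each module $M_j$ by $\omega(M_j)$. A maximum clique of $X$ is a clique $\mathcal C$ of $H$ together with maximum cliques of its members, and it has size $>(1-\delta)W$. The plan is to find a few cliques of $H$ of large weight such that every stable set $I_v$ misses all but one member of some such clique, and then to take unions of maximum cliques of the modules in that clique minus the possibly missed member.

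Concretely, fix a maximum-weight clique $\mathcal C=\{M_{j_1},\dots,M_{j_q}\}$ of $H$. Since $I_v$ is stable, it contains at most one $M_{j_l}$. Let $K_l$ be the union of maximum cliques of the members of $\mathcal C$ other than $M_{j_l}$, and let $K_0$ be the union over all of $\mathcal C$. If $I_v\cap\mathcal C=\emptyset$, then $v$ is complete to $K_0$. If $I_v\cap \mathcal C=\{M_{j_l}\}$, then $v$ is complete to $K_l$. The sizes work as long as each member of $\mathcal C$ has weight $<\tfrac12W$, since then $|K_l|>(1-\delta)W-\tfrac12 W$. The number of cliques is $q+1\le W$, after absorbing the case $q=W$ or noting that $q\le\omega(X)\le W$ and merging appropriately.

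The main obstacle is the case where some module $M=M_{j_l}$ of $\mathcal C$ has $\omega(M)\ge\tfrac12 W$. Then the neighbourhood in $H$ of $M$ has total weight $\le W-\omega(M)$, so it is small, and I would instead take cliques inside $M$. Modules of $H$ adjacent to $M$ are complete to $M$, and $\chi(X\setminus M)$ should be controlled via $\CN(M)$ plus the non-neighbours, using connectivity of $H$. The delicate point is that $v$ may miss vertices of $M$ itself, so $K_i$ must then come from a recursive choice inside $M$ or from modules adjacent to $M$. I expect to handle this by choosing $\mathcal C$ to minimize the maximum weight, or by arguing that a heavy $M$ together with its neighbours forces $\chi(X\setminus M)$ to be negligible, contradicting strongness or the descent. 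Finally, (c) follows directly from the second bullet: each $K_i$ is a clique of size $>(\tfrac12-\delta)W=\tfrac14W$. To obtain negligibility of $\CN(K_i)$, I would note that $\omega(\CN(K_i))<W-|K_i|$ and would need $|K_i|\ge \delta W$ so that $\omega(\CN(K_i))\le(1-\delta)W$; this holds since $\tfrac14W=\delta W$. Tightness then makes $\CN(K_i)$ negligible.
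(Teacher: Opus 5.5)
\textbf{The gap is in part (b).} Your parts (a) and (c) are fine. In (a), bounding the loss of each complement step by $\nu$ via $\CN$ of the surviving co-component is even sharper than needed. You should only add why some co-component is non-negligible: otherwise $\chi(X_i)<W\nu$.

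The gap is exactly the case you flag as the main obstacle, and it cannot be closed for the $X$ you chose, the first connected, co-connected strong module on the way down. Suppose the quotient $H$ is the path $M_1\hbox{-}M_2\hbox{-}M_3\hbox{-}M_4$ with $\omega(M_2)=W-1$ and $M_1,M_3,M_4$ stable.
\begin{itemize}
\item Every clique of $H$ of weight more than $2$ contains $M_2$, so no re-choice of $\mathcal C$ helps, and your $K_l$ for $M_{j_l}=M_2$ is a single vertex.
\item Take a vertex $v$ whose non-neighbours in $X$ lie in $M_2\cup M_4$; this is a stable set of $H$, so \cref{lem:endpoint} permits it. Every clique of $X\setminus M_2$ has at most two vertices. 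Hence for $W\ge 8$ any admissible $K_i$ for $v$ must meet $M_2$ inside $N(v)$, and must be found by looking inside $M_2$.
\end{itemize}
Nothing excludes this configuration: all but a negligible part of $\chi(X)$ may live in $M_2$. So there is no contradiction with strongness or with the descent to be extracted. It is the stopping rule of your descent in (a) that must change.

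\textbf{The missing idea is to let the walk continue through connected, co-connected nodes.}
\begin{itemize}
\item At such a node, let $\mathcal H$ be the set of maximal strong modules with clique number greater than $W/2$. No two of them can be complete to each other, so they are pairwise anticomplete and $\chi(\bigcup\mathcal H)=\max_{M\in\mathcal H}\chi(M)$.
\item If the union of the remaining (light) modules is negligible, move to the member of $\mathcal H$ of largest chromatic number. This costs less than $\nu$. Because the quotient is connected, that module is complete to some other module, so the clique number drops. Your accounting (at most $W$ clique-lowering steps, each costing less than $\nu$) therefore still gives (a).
\item Stop only when the light part is non-negligible. Tightness then gives it clique number greater than $(1-\delta)W$. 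That is, there is a clique $\Lambda$ of $H$ consisting of light modules of total weight greater than $(1-\delta)W$.
\end{itemize}
Your construction applied to $\Lambda$ now works verbatim and yields cliques of size greater than $(1-\delta)W-W/2=(\frac12-\delta)W$. Here $2\le|\Lambda|\le W$ holds automatically. You can also drop $K_0$, because a vertex complete to $K_0$ is complete to every $K_l$. This is precisely the stopping rule used in the paper's proof.
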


\begin{proof}
We walk down the modular decomposition tree of $G[P]$. Put $X_0=P$. We then recursively define strong module $X_j$ of $P$ for $j\ge 1$ as follows.
Suppose that $X_j$ is a strong module of $G[P]$ with $\chi(X_j)\ge \frac12W^{-d_1}\chi$. Then $\chi(X_j)\ge\nu$ and so $\omega(X_j)>(1-\delta)W$. In particular, $|X_j|\ge 2$.
\begin{itemize}
\item If $X_j$ is not connected, let $X_{j+1}$ be a component of maximum chromatic number. In this case, $\chi(X_{j+1})=\chi(X_j)$.
\item If $X_j$ is connected but not co-connected, its co-components $D_1,\dots,D_s$ ($s\le W$) are pairwise complete. If there are two components with chromatic number $\ge \nu$, the clique number of the union of them is larger than $2(1-\delta)W>W$. 
As $\chi(X_j)\ge \frac12W^{-d_1}\chi\ge W^{1-d}\chi=W\cdot \nu$, exactly one co-component $X_{j+1}$ is not negligible.
Let $X_{j+1}$ be the unique co-component that is not negligible. In this case,
$\chi(X_{j+1})\ge\chi(X_j)-(W-1)\nu$. Also $\omega(X_{j+1})\le\omega(X_j)-1$.
\item If $X_j$ is connected and co-connected, let $\mathcal{H}$ be the set of its maximal strong modules with clique number larger than $W/2$, and $U_{\mathcal{H}}$ their union. Note that any two modules in $\mathcal{H}$ are anticomplete and so
$\chi(U_{\mathcal{H}})=\max_{M\in\mathcal{H}}\chi(M)$. If $\chi(X_j\setminus U_{\mathcal{H}})\ge\nu$, we stop. 
Otherwise let $X_{j+1}\in\mathcal{H}$ have maximum chromatic number. In this case,  $\chi(X_{j+1})>\chi(X_j)-\nu$. 
By Lemma \ref{lem:gallai} (c), $X_{j+1}$ is complete to some other module and so $\omega(X_{j+1})\le\omega(X_j)-1$.
\end{itemize}
In each step, the chromatic number of $X_j$ decreases at most by $(W-1)\nu$. 
Since $P$ is connected, the first step is not the first kind. 
Two consecutive steps are never of the first kind and so at least $\lceil j/2\rceil$ of the first $j$ steps lower the clique number, and $\omega(X_j)\le W-\lceil j/2\rceil$. 
Since $\omega(X_j)>(1-\delta)W$,
$\lceil j/2\rceil<\delta W$ and so $j<2\delta W$. 
Since $j+1\le 2\delta W+1\le W$ and $W^{d-d_1-2}\ge2$,
$\chi(X_{j+1})\ge\chi(P)-(j+1)(W-1)\nu\ge\chi(P)-W^{2-d
}\chi\ge\frac12W^{-d_1}\chi$.
So the invariant is kept. 
Since the walk cannot continue forever, it stops at some $X=X_J$ that is connected and co-connected with $\chi(X\setminus U_{\mathcal{H}})\ge\nu$, where $J<2\delta W$ and $\chi(X)\ge\chi(P)-J(W-1)\nu\ge\chi(P)-\frac12W^{2-d}\chi$. This proves~(a).

For (b), $\chi(X\setminus U_{\mathcal{H}})\ge\nu$ gives $\omega(X\setminus U_{\mathcal{H}})>(1-\delta)W$. So there is a clique $\Lambda$ of the quotient, avoiding $\mathcal{H}$, such that taking a maximum clique $K_a$ in each module $a\in\Lambda$, the union $K_\Lambda=\bigcup_{a\in\Lambda}K_a$ is a clique of size more than $(1-\delta)W$. For $a\in\Lambda$ put $K'_a=K_\Lambda\setminus K_a$. Then $|\Lambda|\le W$ and $|K'_a|>(1-\delta)W-W/2$. Let $v$ be as in (b). By \cref{lem:endpoint}, $v$ has a non-neighbor in at most one module of $\Lambda$ and so it is complete to $K'_a$ for some $a\in\Lambda$.
This proves (b).
Note that $\omega(\CN(K'_a))<(\frac12+\delta)W=(1-\delta)W$, so each $\CN(K_i)$ is negligible. This proves (c).
\end{proof}

\begin{lemma}\label{lem:sep}
Let $Z$ be connected, $P\subseteq Z$ be connected , $S\subseteq Z$ be disjoint with $P$ such that every vertex of $S$ has a neighbor in $P$.
Let $Q$ be a component of $G[Z\setminus S]$ that is disjoint with $P$ and anticomplete to $P$.
Suppose $\chi(Q)\ge W^{-d_1}\chi$ with $d_1\le d-3$ and $\omega(P)>(1-\delta)W$. Let $X$ and $K_1,\dots,K_m$ be given by \cref{lem:cut} applied to $Q$. Then
\[
N_Z(X)\subseteq\CN(X)\cup\CN(P)\cup\CN(K_1)\cup\dots\cup\CN(K_m).
\]
\end{lemma}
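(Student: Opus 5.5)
Fix $v\in N_Z(X)$ and suppose $v\notin\CN(X)$, so $v$ is mixed on $X$. The aim is to show $v\in\CN(P)$ or that $v$ has a tail with respect to $X$; in the latter case \cref{lem:cut}(b) makes $v$ complete to some $K_i$. Throughout I use that $X$ is a strong module of $G[Q]$, hence every vertex of $Q\setminus X$ is complete or anticomplete to $X$. Consequently a vertex mixed on $X$ cannot lie in $Q\setminus X$. So $v\notin Q$, and since $Q$ is a component of $G[Z\setminus S]$ and $v\in Z$ has a neighbor in $X\subseteq Q$, we get $v\in S$. Hence $v$ has a neighbor $p\in P$, and $p\notin N(X)$ because $P$ is anticomplete to $Q$.

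The main step is to produce a tail $v\hbox{-}b\hbox{-}a$ inside $P$ when $v$ is not complete to $P$. Since $P$ is connected and $v$ has a neighbor in $P$ but is not complete to $P$, there is an edge $ba$ of $G[P]$ with $v\sim b$ and $v\not\sim a$. Both $a,b\in P$ are anticomplete to $Q\supseteq X$, and $a,b\notin X$. So $v\hbox{-}b\hbox{-}a$ is an induced path, i.e.\ a tail of $v$ with respect to $X$. By \cref{lem:cut}(b), $v$ is complete to some $K_i$, giving $v\in\CN(K_i)$. If instead $v$ is complete to $P$, then $v\in\CN(P)$.

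So every $v\in N_Z(X)$ lies in $\CN(X)$, in $\CN(P)$, or in some $\CN(K_i)$, as claimed. The only delicate point is the first step: confirming that the mixed vertex cannot come from $Q\setminus X$. This uses that $X$ is a module of $G[Q]$, which holds since $X$ is a strong module of $G[Q]$ as produced by \cref{lem:cut}. Note that the hypothesis $\omega(P)>(1-\delta)W$ is not needed for the inclusion itself, presumably only to make $\CN(P)$ negligible later.
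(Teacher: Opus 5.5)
Your proposal is correct and follows essentially the same route as the paper's proof. Both use the module property to place mixed vertices of $N_Z(X)$ in $S$, then either get $\CN(P)$ or extract a tail $v\hbox{-}b\hbox{-}a$ from an edge of $G[P]$ and apply \cref{lem:cut}(b).
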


\begin{proof}
As $Q$ is a component of $G[Z\setminus S]$, $N_Z(X)\subseteq Q\cup S$. 
Since $X$ is a module of $Q$ by \cref{lem:cut}, $N_Q(X)\subseteq \CN(X)$. 
Let $s\in S\cap N(X)$ be a vertex that is not complete to $X$. If $s$ is complete to $P$, it lies in $\CN(P)$. Otherwise $s$ is mixed on the connected set $P$, and so there is an edge $pp'$ of $G[P]$ with $s\sim p$, $s\not\sim p'$. Then $s\hbox{-}p\hbox{-}p'$ is a tail with respect to $X$, and $s\in\CN(K_i)$ for some $i$ by \cref{lem:cut} (b). 
\end{proof}

\subsection{Pieces}
Let $d_1=42, p=W^{-d_1}\chi, \tau_1=W^{40}p=W^{-2}\chi,\tau_2=\tau_1+(W+2)\nu, M_0=(W+2+W^{16})\nu$.

A \emph{piece system} is a family $\{A_1,\dots,A_k\}$ ($k\ge0$) of pairwise disjoint, pairwise anticomplete connected sets with $\chi(A_i)\ge p/2$ and $\chi(N(A_i))\le M_0$ for all $i$. Since $p/2\ge\nu$, each piece has $\omega(A_i)>(1-\delta)W$. For a piece system, let $B=N(A_1\cup\dots\cup A_k)$. The components of $G\setminus(B\cup A_1\cup\dots\cup A_k)$ are its \emph{free components}.

\begin{lemma}[Peeling]\label{lem:peel}
Let $\{A_1,\dots,A_k\}$ be a piece system and $Z$ a free component with $\chi(Z)\ge\tau_1$. Then there is $X\subseteq Z$ such that $\{A_1,\dots,A_k,X\}$ is a piece system, and some component $Z'$ of $G[Z\setminus N_Z(X)]$ is a free component of the new piece system with $\chi(Z')>\chi(Z)-(W+2)\nu$.
\end{lemma}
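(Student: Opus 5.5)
We plan to obtain $X$ from \cref{lem:cut} and \cref{lem:sep}, using the free component $Z$ itself as the ambient set. The work is in choosing a connected $P\subseteq Z$ and a component $Q$ of $G[Z\setminus N_Z(P)]$ with large chromatic number, so that \cref{lem:sep} bounds $N_Z(X)$. All bounds are then checked against the constants $p=W^{-42}\chi$, $\tau_1=W^{-2}\chi$ and $M_0=(W+2+W^{16})\nu$.

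\textbf{Step 1: choosing $P$ and $Q$.} Apply \cref{lem:nguyen} to $G[Z]$. Since $\chi(Z)\ge\tau_1>\nu$, we have $\omega(Z)>(1-\delta)W\ge2$, and $Z$ is not complete. This gives connected, anticomplete $A,B\subseteq Z$ with $\chi(A),\chi(B)\ge W^{-40}\chi(Z)\ge W^{-42}\chi=p$. Take $P=A$; since $p\ge\nu$, we get $\omega(P)>(1-\delta)W$. Let $S=N_Z(P)$. The set $B$ is connected and avoids $P\cup S$, so it lies in one component $Q$ of $G[Z\setminus S]$. That component is disjoint from and anticomplete to $P$, and $\chi(Q)\ge p=W^{-d_1}\chi$ with $d_1=42\le d-3$.

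\textbf{Step 2: cutting off $X$.} Apply \cref{lem:cut} to $Q$ to get $X$ and cliques $K_1,\dots,K_m$. Then $X$ is connected and $\chi(X)\ge\chi(Q)-\tfrac12W^{2-d}\chi\ge p/2$, since $W^{2-d}\chi\le W^{-42}\chi$. Now bound $N(X)$. As $X\subseteq Z$ and $Z$ is a free component, $N(X)\subseteq Z\cup B_{\mathrm{old}}$, where $B_{\mathrm{old}}=N(A_1\cup\dots\cup A_k)$. The set $X$ is anticomplete to every $A_i$, so it stays disjoint and anticomplete to the old pieces. By \cref{lem:sep}, $N_Z(X)$ is covered by $\CN(X)$, $\CN(P)$ and the $\CN(K_i)$. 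These are $m+2\le W+2$ sets, each negligible by tightness and \cref{lem:cut}(c), so together they contribute at most $(W+2)\nu$.

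\textbf{Step 3: the part of $N(X)$ in $B_{\mathrm{old}}$ (the main obstacle).} We must show $\chi(N(X)\cap B_{\mathrm{old}})\le W^{16}\nu$ to reach $M_0$. A plausible route is to show that any $b\in B_{\mathrm{old}}\cap N(X)$ that is not complete to $X$ has a tail with respect to $X$. Such a $b$ has a neighbor $a\in A_i$, and $a$ is anticomplete to $X$. If some $a'\in A_i$ is a non-neighbor of $b$ adjacent to $a$, then $b\hbox{-}a\hbox{-}a'$ is a tail, so $b\in\CN(K_j)$ for some $j$, which is already counted. The remaining vertices are complete to $X$ or complete to $A_i$. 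Those complete to $X$ lie in the negligible set $\CN(X)$. For those complete to $A_i$, we need that $\CN(A_i)$ has small chromatic number. Since $\chi(A_i)\ge p/2\ge\nu$, tightness gives $\omega(A_i)>(1-\delta)W$, so $\omega(\CN(A_i))<\delta W$. The union over $i$ of $\CN(A_i)\cap N(X)$ should then be controlled by \eqref{eq:blob16}, or by \cref{thm:stable} with the contracted pieces as points, giving a factor $W^{16}$ times $\nu$. If this fails, we would instead enlarge $S$ to include $B_{\mathrm{old}}$-adjacent vertices, or choose $Q$ so that it avoids them.

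\textbf{Step 4: the surviving free component.} Finally, take $Z'$ to be the component of $G[Z\setminus N_Z(X)]$ of maximum chromatic number that does not meet $X$. The new boundary inside $Z$ is $N_Z(X)$, whose chromatic number is at most $(W+2)\nu$. The set $X$ itself has chromatic number below $\chi(Z)-\tau_1$, and only the one non-negligible side needs to survive. Together these give $\chi(Z')>\chi(Z)-(W+2)\nu$, and $Z'$ is a free component of the new system. Verifying this last inequality requires care: it needs $Z\setminus(X\cup N_Z(X))$ to carry essentially all of $\chi(Z)$, and we would use that $\chi(X)\le\chi(Q)$ and repeat the choice with $B$ replaced appropriately.
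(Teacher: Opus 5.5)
\textbf{There is a genuine gap in Step 4, and it comes from your choice of $P$ and $Q$ in Step 1.} Steps 1--3 are mostly sound. Taking $S=N_Z(P)$ is a legitimate input to \cref{lem:sep}, and slightly simpler than the paper's minimal separator.

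The problem is this. You take $P=A$ and let $Q$ be the component of $G[Z\setminus N_Z(A)]$ containing $B$. Nothing in this choice prevents $Q$ from being almost all of $Z$. Then $X$, which satisfies $\chi(X)\ge\chi(Q)-\tfrac12W^{2-d}\chi$, can carry almost all of $\chi(Z)$. Meanwhile the component containing $P$, and every other component of $G[Z\setminus N_Z(X)]$ except $X$, may have chromatic number only about $p=W^{-42}\chi$. That is far below $\chi(Z)-(W+2)\nu$.

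Since $X$ becomes a piece, $Z'$ cannot be $X$, so the required inequality is not established. Your claim $\chi(X)<\chi(Z)-\tau_1$ is unjustified. Even if it held it would not help, because $\tau_1$ is much larger than $(W+2)\nu$. What you actually need is $\chi(C)\le\chi(Z')$ for every other component $C$, including $X$. Then $\chi(Z')=\chi(Z\setminus N_Z(X))\ge\chi(Z)-\chi(N_Z(X))>\chi(Z)-(W+2)\nu$.

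\textbf{The missing idea is an extremal choice of the pair, which is what the paper does.} Among all anticomplete pairs of connected subsets of $Z$ with both chromatic numbers at least $p$ (such pairs exist by \cref{lem:nguyen}), take $(P,Q)$ with $\chi(P)+\chi(Q)$ maximum. Label them so that $\chi(P)\ge\chi(Q)$. Then the argument closes as follows.
\begin{itemize}
\item Replace $Q$ by its component in $G[Z\setminus N_Z(P)]$. This is again an admissible pair with $P$, so by maximality its chromatic number does not grow. Hence $\chi(X)\le\chi(Q)\le\chi(P)$.
\item Any other component $C$ of $G[Z\setminus N_Z(X)]$ is connected and anticomplete to $P$. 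Either $\chi(C)<p\le\chi(P)$, or $(P,C)$ is admissible and so $\chi(C)\le\chi(Q)\le\chi(P)$.
\item Since $N_Z(X)\subseteq Q\cup S$ misses $P$, the component containing $P$ has maximum chromatic number, and the bound follows.
\end{itemize}
The paper also maximizes $|P|+|Q|$ so that $P$ and $Q$ are components of $G[Z\setminus S]$ for a minimal separator $S$. With your $S=N_Z(P)$ that tie-break is unnecessary.

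\textbf{A smaller point on Step 3.} \eqref{eq:blob16} is not the right tool there, since it only gives a bound of $W^{16}M_0$. The route via \cref{thm:stable} does work, but apply it as follows. Take the set of leftover vertices, which are complete or anticomplete to every piece. Use one representative $a_j\in A_j$ per piece as the points. Then each star lies in the negligible set $\CN(A_j)$.
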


\begin{proof}
We proceed as follows.

\emph{Step 1: an anticomplete pair.} We have $\chi(Z)\ge W^{-2}\chi>W^{d-2}>W\ge\omega(Z)$, so $G[Z]$ is not complete, and $\omega(Z)>(1-\delta)W\ge2$. By \cref{lem:nguyen}, $Z$ contains an anticomplete pair of connected sets with chromatic numbers at least $\omega(Z)^{-40}\chi(Z)\ge W^{-40}\tau_1=p$. Among all such pairs $(P,Q)$ in $Z$ choose one with $\chi(P)+\chi(Q)$ maximum, then $|P|+|Q|$ maximum, and with $\chi(P)\ge\chi(Q)$. Let $S$ be an inclusion-minimal subset of $Z\setminus(P\cup Q)$ separating $P$ from $Q$ in $G[Z]$. The components of $G[Z\setminus S]$ containing $P$ and $Q$ form a pair at least as good as $(P,Q)$. So $P$ and $Q$ are components of $G[Z\setminus S]$. By minimality of $S$, every vertex of $S$ has a neighbor in $P$ and in $Q$.

\emph{Step 2: the new piece.} Apply \cref{lem:cut} to $Q$ (note $d_1=d-3$) and let $X$, $K_1,\dots,K_m$ be the outcome. Then $X$ is connected and $\chi(X)\ge p-\frac12W^{2-d}\chi\ge p/2$. As $\chi(P)\ge p\ge\nu$, $\omega(P)>(1-\delta)W$. By  \cref{lem:sep}, $N_Z(X)\subseteq\CN(X)\cup\CN(P)\cup\CN(K_1)\cup\dots\cup\CN(K_m)$.

\emph{Step 3: the boundary outside $Z$.} Since $N(Z)\subseteq B$, we have $N(X)=N_Z(X)\cup(B\cap N(X))$. Let $v\in B\cap N(X)$. If $v$ is complete to $X$, then $v\in\CN(X)$. If $v$ is mixed on $X$ and has a tail with respect to $X$, then $v\in\CN(K_i)$ for some $i$ by \cref{lem:cut} (b). 
Suppose that $v$ is mixed on $X$ and has no tail with respect to $X$. Each piece $A_j$ and the set $P$ are connected and anticomplete to $X$. If $v$ is mixed on one of them, an edge $cc'$ inside it with $v\sim c$, $v\not\sim c'$ would give a tail $v\hbox{-}c\hbox{-}c'$. So $v$ is complete or anticomplete to each $A_j$ and to $P$. Since $v\in B$, it is complete to some piece. Let $U$ be the set of such vertices $v$ that are anticomplete to $P$.

\emph{Step 4: the cost of $U$.} Choose $a_j\in A_j$ for each $j$ and let $R=\{a_1,\dots,a_k\}$. Then $R$ is stable, every vertex of $U$ has a neighbor in $R$, and $N(a_j)\cap U\subseteq\CN(A_j)$, which is negligible. By \cref{thm:stable} (with factor $W^{16}$) applied to $G[U\cup R]$, $\chi(U)<W^{16}\nu$.

\emph{Step 5: the new piece system.} We have
\[
N(X)\subseteq\CN(X)\cup\CN(P)\cup\CN(K_1)\cup\dots\cup\CN(K_m)\cup U.
\]
Since $\CN(X),\CN(P),\CN(K_1),\dots,\CN(K_m)$ are negligible, $\chi(N(X))<(W+2)\nu+W^{16}\nu=M_0$. As $X\subseteq Z$ is connected and anticomplete to every $A_j$, $\{A_1,\dots,A_k,X\}$ is a piece system.

\emph{Step 6: the surviving component.} Since $X\subseteq Q$ and $Q$ is a component of $G[Z\setminus S]$, $N_Z(X)\subseteq Q\cup S$ and so $P\cap N_Z(X)=\emptyset$.
Let $Z'$ be the component of $G[Z\setminus N_Z(X)]$ containing $P$. Every other component $C\ne X$ of $G[Z\setminus N_Z(X)]$ is connected and anticomplete to $P$; If $\chi(C)\ge p$ then $\chi(C)\le\chi(Q)$ by the choice of $(P,Q)$. If $\chi(C)< p$, then $\chi(C)<p\le\chi(Q)$. Also $\chi(X)\le\chi(Q)\le\chi(P)\le\chi(Z')$. Hence
\[
\chi(Z')=\chi(Z\setminus N_Z(X))\ge\chi(Z)-\chi(N_Z(X))>\chi(Z)-(W+2)\nu.
\]
Since $N(Z')\subseteq N_Z(X)\cup B\subseteq B\cup N(X)$ and $Z'$ is anticomplete to $X$ and to every $A_j$, $Z'$ is a free component of the new system.
\end{proof}

\begin{lemma}\label{lem:dissolve}
If a piece system has a piece $A^*$ with $\chi(A^*)\ge\tau_2$, then there is a piece system with more pieces.
\end{lemma}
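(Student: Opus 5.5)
The plan is to split the big piece $A^*$ into two or more smaller pieces, keeping every other piece unchanged. Write the system as $\{A_1,\dots,A_k\}$ with $A^*=A_1$ and $\chi(A_1)\ge\tau_2=\tau_1+(W+2)\nu$. The idea is to treat $A_1$ the way \cref{lem:peel} treats a free component: first extract a new piece $X$, then show that what remains of $A_1$ still contains a second piece. To set this up, delete $A_1$ from the system. The set $A_1$ is connected and anticomplete to $A_2,\dots,A_k$, so it lies inside a free component $Z$ of the system $\{A_2,\dots,A_k\}$. Then $\chi(Z)\ge\chi(A_1)\ge\tau_2\ge\tau_1$, and \cref{lem:peel} applies to $Z$.

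Peeling gives a new piece $X\subseteq Z$ and a free component $Z'$ of $\{A_2,\dots,A_k,X\}$ with
\[
\chi(Z')>\chi(Z)-(W+2)\nu\ge\tau_1 .
\]
Applying \cref{lem:peel} again, now to $Z'$, gives a second new piece $X'\subseteq Z'$. This produces the system $\{A_2,\dots,A_k,X,X'\}$, which has $k+1$ pieces. The one condition that needs care is the neighbourhood bound $\chi(N(\cdot))\le M_0$ for $X$ and $X'$. It is measured in the whole graph $G$, and $N(X)$ may meet $N(A_1)$ and the rest of $B$. This is fine: the proof of \cref{lem:peel} only used that $N(Z)\subseteq B$ for the current system. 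So each application of peeling certifies $\chi(N(X))\le M_0$ in $G$, with the outside vertices handled by Step~3 and Step~4. The new pieces are disjoint and pairwise anticomplete: $X'\subseteq Z'$, $Z'$ is anticomplete to $X$, and both lie in free components, so both are anticomplete to $A_2,\dots,A_k$.

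The main obstacle I expect is the first reduction, that $A_1$ together with its surroundings lies in a single free component of chromatic number at least $\tau_1$. This is automatic: removing a piece only enlarges the free region. Once $A_1$ is dropped, $A_1\cup(N(A_1)\setminus N(A_2\cup\dots\cup A_k))$ is part of a free component, so $\chi(Z)\ge\chi(A_1)$. If this reduction looks too loose, the fallback is to run Steps~1--6 of \cref{lem:peel} directly inside $G[A_1]$, with $B\cup N(A_1)$ playing the role of the outside boundary. The same tail argument and the same application of \cref{thm:stable} then bound the boundary cost by $W^{16}\nu$.

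The bound $\tau_2-\tau_1=(W+2)\nu$ is calibrated exactly for this: it guarantees that the second peeling step is still allowed.
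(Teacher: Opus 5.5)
Your proposal is correct and follows the paper's proof exactly: drop $A^*$, note that it lies in a free component $Z$ of the remaining system with $\chi(Z)\ge\tau_2\ge\tau_1$, peel once to get $X$ and $Z'$ with $\chi(Z')>\tau_1$, then peel again to obtain one more piece than before. The extra checks on $\chi(N(X))\le M_0$ and on anticompleteness are already part of the conclusion of \cref{lem:peel}, which states that the enlarged family is a piece system, so your fallback argument is not needed.
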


\begin{proof}
Removing $A^*$ leaves a piece system, and $A^*$, being anticomplete to the other pieces, lies in one of its free components $Z$ with $\chi(Z)\ge\tau_2\ge\tau_1$. By \cref{lem:peel}, we obtain a new piece $X_1$ and a free component $Z'$ with $\chi(Z')>\tau_2-(W+2)\nu=\tau_1$. Applying \cref{lem:peel} again to $Z'$ gives a second new piece $X_2$. The resulting system has one more piece than the original one.
\end{proof}

\subsection{Conclusion}
Take a piece system $\{A_1,\dots,A_k\}$ with $k$ maximum (the empty family is a piece system, and $k\le|V(G)|$). By \cref{lem:peel}, every free component has $\chi<\tau_1$, and by \cref{lem:dissolve}, every piece has $\chi<\tau_2$. The pieces and the free components are pairwise anticomplete and so $\chi(V(G)\setminus B)<\tau_2$. By~\eqref{eq:blob16}, $\chi(B)\le W^{16}M_0$. Since $W\ge3$ gives $W+2\le W^{16}$,
\[
\chi\le\chi(B)+\chi(V(G)\setminus B)<\bigl(2W^{32-45}+W^{-2}+2W^{1-45}\bigr)\chi\le\Bigl(\frac{2}{3^{13}}+\frac19+\frac{2}{3^{44}}\Bigr)\chi<\chi,
\]
a contradiction. \qed

\section*{Declaration of generative AI use} 
The proof presented in this paper was developed through an interactive dialogue with Claude Opus 5.5 (Anthropic) and ChatGPT-6 (OpenAI). The authors used them to explore multiple possible proof strategies and directions, and the AI generated the initial draft under the authors' guidance. The authors then critically examined the generated proof, verified each step for correctness and rigor, and revised it where necessary. The authors take full responsibility for the final content.


\begin{thebibliography}{99}

\bibitem{chvatal1984}
V.~Chv\'atal.
\newblock Perfectly ordered graphs.
\newblock In {\em Topics on Perfect Graphs}, volume~21 of {\em Ann. Discrete Math.}, pages 63--68. North-Holland, Amsterdam, 1984.

\bibitem{gallai1967}
T.~Gallai.
\newblock Transitiv orientierbare {G}raphen.
\newblock {\em Acta Math. Acad. Sci. Hungar.}, 18:25--66, 1967.

\bibitem{ghm2003}
S.~Gravier, C.~T. Ho\`ang, and F.~Maffray.
\newblock Coloring the hypergraph of maximal cliques of a graph with no long path.
\newblock {\em Discrete Math.}, 272(2-3):285--290, 2003.

\bibitem{gyarfas1975}
A.~Gy\'arf\'as.
\newblock On {R}amsey covering-numbers.
\newblock In {\em Infinite and Finite Sets}, volume~10 of {\em Colloq. Math. Soc. J\'anos Bolyai}, pages 801--816. North-Holland, Amsterdam, 1975.

\bibitem{gyarfas1987}
A.~Gy\'arf\'as.
\newblock Problems from the world surrounding perfect graphs.
\newblock {\em Zastos. Mat.}, 19(3-4):413--441, 1987.



\bibitem{nguyen2025pairs}
T.~H. Nguyen.
\newblock On polynomially high-chromatic pure pairs.
\newblock \href{https://arxiv.org/abs/2504.21127v3}{\tt arXiv:2504.21127v3}.

\bibitem{nguyen2025p5}
T.~H. Nguyen.
\newblock Polynomial $\chi$-boundedness for excluding {$P_5$}.
\newblock \href{https://arxiv.org/abs/2512.24907}{\tt arXiv:2512.24907}. Preliminary version to appear in FOCS 2026.



\bibitem{oum2026}
S.~Oum.
\newblock Coloring graphs with no long induced path.
\newblock \href{https://arxiv.org/abs/2609.08847}{\tt arXiv:2609.08847}, 2026.

\bibitem{ss2020}
A.~Scott and P.~Seymour.
\newblock A survey of $\chi$-boundedness.
\newblock {\em J. Graph Theory}, 95(3):473--504, 2020.

\bibitem{sss2023}
A.~Scott, P.~Seymour, and S.~Spirkl.
\newblock Polynomial bounds for chromatic number. {IV}. {A} near-polynomial bound for excluding the five-vertex path.
\newblock {\em Combinatorica}, 43(5):845--852, 2023.

\bibitem{seinsche1974}
D.~Seinsche.
\newblock On a property of the class of $n$-colorable graphs.
\newblock {\em J. Combin. Theory Ser. B}, 16:191--193, 1974.



\end{thebibliography}
\end{document}